\newtheorem{prop}{Proposition}[section]
\newtheorem{thm}[prop]{Theorem}
\newtheorem{lemm}[prop]{Lemma}
\newtheorem{coro}[prop]{Corollary}
\newtheorem*{claim*}{Claim}
\theoremstyle{definition}
\newtheorem{rmk}[prop]{Remark}
\newcommand{\CC}{\mathbb{C}}
\newcommand{\PP}{\mathbb{P}}
\newcommand{\RR}{\mathbb{R}}
\newcommand{\cC}{\mathcal C}
\newcommand{\cK}{\mathcal K}
\newcommand{\cL}{\mathcal L}
\def\ff{\mathfrak{f}}
\def\fp{\mathfrak{p}}
\def\fsu{\mathfrak{su}}
\DeclareMathOperator{\tr}{tr}
\DeclareMathOperator{\Span}{span}
\DeclareMathOperator{\supp}{supp}
\DeclareMathOperator{\Div}{div}
\DeclareMathOperator{\re}{Re}
\DeclareMathOperator{\Ric}{Ric}
\newcommand{\ep}{\varepsilon}
\newcommand{\pa}[2]{\frac{\partial #1}{\partial #2}}
\newcommand{\paop}[1]{\pa{}{#1}}
\setlist[enumerate]{leftmargin = 2em}
\numberwithin{equation}{section}
\title[Instability of solutions to the Ginzburg--Landau equation]{Instability of solutions to the Ginzburg--Landau equation on $S^{n}$ and $\CC\PP^{n}$}
\author{Da Rong Cheng}
\address{Department of Mathematics, University of Chicago, Chicago, IL 60637}
\email{chengdr@uchicago.edu}
\begin{document}

\maketitle 
\begin{abstract} We study critical points of the Ginzburg--Landau (GL) functional and the abelian Yang--Mills--Higgs (YMH) functional on the sphere and the complex projective space, both equipped with the standard metrics. For the GL functional we prove that on $S^{n}$ with $n \geq 2$ and $\CC\PP^{n}$ with $n \geq 1$, stable critical points must be constants. In addition, for GL critical points on $S^{n}$ for $n \geq 3$ we obtain a lower bound on the Morse index under suitable assumptions. On the other hand, for the abelian YMH functional we prove that on $S^{n}$ with $n \geq 4$ there are no stable critical points unless the line bundle is isomorphic to $S^n \times \CC$, in which case the only stable critical points are the trivial ones. Our methods come from the work of Lawson--Simons.
\end{abstract}


\section{Introduction}
\subsection{Motivation and main results}
A classical theorem of Lawson--Simons~\cite{LS} says that there exist no stable varifolds or currents on the round $S^n$, and that any closed, stable stationary integral current in $\CC\PP^{n}$ with the Fubini--Study metric is an integral combination of complex subvarieties. The idea is to consider the second variation of the volume with respect to ambient deformations generated by special vector fields. In both cases, the vector fields arise as the gradients of eigenfunctions corresponding to the first non-zero eigenvalue of the Laplace operator. 

Since the work of Lawson--Simons, similar methods have been applied to study stable Yang--Mills connections on $S^{n}$ (see for instance~\cite{BL}) and stable harmonic maps on $S^{n}$ (see for instance~\cite{Xin}). In this paper, we apply the ideas in~\cite{LS} to study the stability of solutions on $S^{n}$ and $\CC\PP^{n}$ to certain singularly perturbed elliptic equations related to superconductivity. The motivation comes from the relationship these solutions have with minimal submanifolds of codimension two. Below we introduce the equations of interest and explain their connection to minimal submanifolds.

On a closed Riemannian $n$-manifold $(M^n, g)$, for $\ep > 0$, we consider the following two functionals: The first is the Ginzburg--Landau (GL) functional, given by
\begin{equation}\label{eq:GL-defi}
E_{\ep}(u) = \int_{M}e_{\ep}(u)d\mu_g,\ e_{\ep}(u) =\frac{|\nabla u|^{2}}{2} + \frac{(1 - |u|^{2})^{2}}{4\ep^{2}},
\end{equation}
where $u$ is a complex-valued function on $M$. The Euler--Lagrange equation of $E_{\ep}$ is 
\begin{equation}\label{eq:GL-eq}
\ep^{2}\Delta_g u = (|u|^{2} - 1)u.
\end{equation}
Non-trivial weak solutions in $W^{1, 2} \cap L^{\infty}(M; \CC)$ can be found by min-max methods~\cite{thesis, Ste}, and these are always smooth by elliptic regularity.

The second functional comes from the self-dual abelian Higgs model, and we will refer to it as the abelian Yang--Mills--Higgs (YMH) functional. To set the stage, let $L$ be a complex line bundle over $M$, equipped with a Hermitian metric $\langle\cdot, \cdot\rangle_L$. For a metric connection $D$ on $L$, we let $F_{D}$ denote $\sqrt{-1}$ times its curvature. Then the abelian YMH functional has the form
\begin{equation}\label{eq:YMH-defi}
F_{\ep}(u, D) = \int_{M} e_{\ep}(u, D)d\mu_{g},\ e_{\ep}(u, D) = \ep^{2} |F_D|^{2} + |Du|^{2} + \frac{(1 - |u|^{2})^{2}}{4\ep^{2}},
\end{equation}
where $u: M \to L$ is a section of $L$, and $D$ is a metric connection on $L$. Note that, given a metric connection $D_{0}$, all the other metric connections are given by
\[
D = D_{0} - \sqrt{-1}a,
\]
where $a$ is a real $1$-form on $M$. Also, for a metric connection, the curvature is a purely imaginary-valued $2$-form on $M$, and hence $F_{D}$ is a real-valued $2$-form. The Euler--Lagrange equations for $F_{\ep}$ are given by
\begin{equation}\label{eq:YMH-eq}
\left\{
\begin{array}{cl}
\ep^{2}D^{\ast}Du &= \frac{1}{2}(1 - |u|^{2})u\\
\ep^{2}d^{\ast}F_{D} &= \re\langle\sqrt{-1}u, Du\rangle.
\end{array}
\right.
\end{equation}
The operator $D^{\ast}$ is the dual of $D$ with respect to the metric induced on $\Omega^{p}(L)$ by $\langle\cdot, \cdot\rangle_L$ and $g$. Note that if $D = D_{0} - \sqrt{-1}a$, then the second equation reads
\[
\ep^{2}d^{\ast}F_{D_{0}} + \ep^{2}d^{\ast}da = \re\langle\sqrt{-1}u, Du\rangle,
\]
which is not elliptic for $a$. This is of course due to the gauge invariance of $F_{\ep}$, where
\[
F_{\ep}(u, D) = F_{\ep}\big(e^{\sqrt{-1}\varphi}u, D - \sqrt{-1}d\varphi\big), \text{ for any $\varphi: M \to \RR$.}
\]
Thus weak solutions may not be smooth globally. On the other hand, it is known that under mild assumptions they are locally gauge equivalent to smooth solutions~\cite{PiSt}. 

The geometric interest of $E_{\ep}$ and $F_{\ep}$ stems from the fact that, under suitable energy bounds, sequences of critical points give rise to stationary $(n-2)$-varifolds in a number of different settings. See for instance~\cite{LR, BBO, JS}. Below we mention two results of this type on Riemannian manifolds, one for each functional. 
\begin{thm}[\cite{Che, Ste}]
\label{thm:GL-conv}
Let $(M, g)$ be a closed manifold. Suppose $u_{\ep}$ is a solution to~\eqref{eq:GL-eq} for each $\ep> 0$ such that 
\begin{equation}\label{eq:GL-energybound}
\frac{1}{|\log\ep|}E_{\ep}(u_{\ep}) \leq C < \infty \text{ for all }\ep > 0.
\end{equation}
Then, up to taking a subsequence, there exists a smooth harmonic $1$-form $\psi$ and a stationary rectifiable $(n - 2)$-varifold $V$, such that, as $\ep \to 0$,
\begin{equation}
\label{eq:GL-measure-conv}
\frac{1}{|\log\ep|}e_{\ep}(u_\ep)d\mu_g \to \frac{|\psi|^{2}}{2}d\mu_g + \|V\| \text{ as measures on }M.
\end{equation}
\end{thm}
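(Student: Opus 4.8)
The plan is to pass to the renormalised energy measures $\mu_\ep := \tfrac{1}{|\log\ep|}\,e_\ep(u_\ep)\,d\mu_g$; by \eqref{eq:GL-energybound} these have uniformly bounded mass, so along a subsequence $\mu_\ep$ converges weakly-$\ast$ to a Radon measure $\mu$ on $M$. The work is then to show that $\mu$ splits into a smooth ``diffuse'' part $\tfrac12|\psi|^2\,d\mu_g$ with $\psi$ harmonic, plus a singular ``vorticity'' part $\|V\|$ carried by an $(n-2)$-rectifiable set, and that $V$ is stationary. Throughout I would use the a priori bound $|u_\ep|\le 1$, which follows by applying the maximum principle to $|u_\ep|^2$ via \eqref{eq:GL-eq}.

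The first step isolates the diffuse part through the \emph{pre-Jacobian} $1$-form $j_\ep := \langle\sqrt{-1}\,u_\ep,\,du_\ep\rangle$. Pairing \eqref{eq:GL-eq} with $\sqrt{-1}\,u_\ep$ and taking real parts kills the potential term, since $|u_\ep|^2-1$ is real while $\langle u_\ep,\sqrt{-1}u_\ep\rangle$ is purely imaginary; this gives the conservation law $d^\ast j_\ep = 0$. Hence in the Hodge decomposition the exact component of $j_\ep$ is absent, $j_\ep = d^\ast\xi_\ep + \psi_\ep$ with $\psi_\ep$ harmonic, and since $|j_\ep|\le|u_\ep|\,|\nabla u_\ep|\le|\nabla u_\ep|$ one gets $\|\psi_\ep\|_{L^2}\le\|\nabla u_\ep\|_{L^2}\le C|\log\ep|^{1/2}$. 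Thus $|\log\ep|^{-1/2}\psi_\ep$ is bounded in the finite-dimensional space of harmonic $1$-forms and, along a further subsequence, converges to a smooth harmonic $\psi$ (necessarily $\psi\equiv 0$ when $H^1(M;\RR)=0$, e.g. on $S^n$ and $\CC\PP^n$). On the region where no concentration occurs one expects $|u_\ep|\to 1$, $\nabla|u_\ep|\to 0$, and $|\log\ep|^{-1/2}j_\ep\to\psi$ in $L^2_{\mathrm{loc}}$, which forces $\tfrac{1}{|\log\ep|}e_\ep(u_\ep)\to\tfrac12|\psi|^2$ there and accounts for the diffuse part of $\mu$.

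Next I would locate the concentration set $\Sigma$ (the set of positive upper density of $\mu$) and establish rectifiability and stationarity. For the local structure I would import, and adapt to a curved closed manifold, the standard analysis of \eqref{eq:GL-eq} of Lin--Rivi\`ere and Bethuel--Brezis--Orlandi: an almost-monotonicity formula for $r\mapsto r^{2-n}\int_{B_r(x)}e_\ep(u_\ep)$ with curvature-controlled error; an $\eta$-ellipticity (clearing-out) lemma, that $r^{2-n}\int_{B_r(x)}e_\ep(u_\ep)\le\eta_0\log(r/\ep)$ forces $|u_\ep|\ge\tfrac12$ on $B_{r/2}(x)$ with good interior energy bounds; and Jacobian estimates bounding $\|dj_\ep\|$ in a negative norm by the energy. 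The monotonicity formula yields a uniform lower bound on the $(n-2)$-density of $\mu$ along $\Sigma$, so $\cH^{n-2}(\Sigma)<\infty$ and the defect measure $\|V\| := \mu - \tfrac12|\psi|^2\,d\mu_g$ is supported on $\Sigma$; a rectifiability argument in the spirit of Ambrosio--Soner (using $\eta$-ellipticity and the Jacobian estimates to control blow-ups) then gives $\|V\| = \theta\,\cH^{n-2}\llcorner\Sigma$ with $\theta\ge\eta_0$ a.e. For stationarity I would use that criticality of $u_\ep$ under ambient diffeomorphisms is equivalent to $\Div T_\ep = 0$ for the stress-energy tensor $T_\ep := e_\ep(u_\ep)\,g - du_\ep\otimes du_\ep$. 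Renormalising, $\tfrac{1}{|\log\ep|}T_\ep$ converges to $T_\psi + T_V$, where $T_\psi := \tfrac12|\psi|^2 g - \psi\otimes\psi$ is the stress-energy tensor of $\psi$, automatically divergence-free because $d\psi = d^\ast\psi = 0$, and where the defect tensor has the form $T_V = P\,\|V\|$ with $P$ the orthogonal projection onto the approximate tangent planes of $\Sigma$ (as one reads off from the single straight-vortex model). Subtracting $T_\psi$ yields $\Div(P\,\|V\|) = 0$, which together with the rectifiability of $\|V\|$ is precisely the first-variation condition for $V$ to be a stationary $(n-2)$-varifold. Assembling these facts gives \eqref{eq:GL-measure-conv}.

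The hard part is the middle step: making the monotonicity formula and the $\eta$-ellipticity lemma effective with $\ep$-uniform constants on a general closed Riemannian manifold, and then disentangling the two scales carried by $u_\ep$ --- the logarithmically large but spread-out gradient of size $\sim|\log\ep|^{1/2}|\psi|$ and the concentrating vorticity near $\Sigma$ --- so as to confirm that they contribute no cross terms to the limiting measure and that the harmonic part of $j_\ep$ genuinely converges to a harmonic form rather than leaking mass onto $\Sigma$. Reproducing this delicate vortex analysis in the manifold setting, together with the blow-up/rectifiability argument, is where most of the effort would go.
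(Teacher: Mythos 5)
Theorem~\ref{thm:GL-conv} is not proved in this paper at all: it is imported from~\cite{Che, Ste}, so there is no internal proof to compare your attempt against. Measured against the cited works, your outline follows essentially the same architecture (Hodge-decomposing the pre-Jacobian, Bethuel--Brezis--Orlandi/Lin--Rivi\`ere style monotonicity and $\eta$-ellipticity, Ambrosio--Soner rectifiability of the defect, and stationarity via the stress-energy tensor), and the elementary steps you actually carry out are correct: pairing~\eqref{eq:GL-eq} with $\sqrt{-1}\,u_\ep$ does give $d^{\ast}j_\ep=0$, hence no exact component in the Hodge decomposition, and the $L^2$ bound $\|\psi_\ep\|_{L^2}\le C|\log\ep|^{1/2}$ follows from~\eqref{eq:GL-energybound}; your identification of the limiting defect stress tensor as the tangential projection times $\|V\|$ is also consistent with the way the paper later uses the matrices $A_{ij}$ from [Che, Section 7] in the proof of Proposition~\ref{prop:2nd-innvar-lim}.

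However, as a proof the proposal has a genuine gap, which you yourself flag: every step that actually makes the theorem true is invoked rather than established. The $\ep$-uniform almost-monotonicity formula with curvature errors, the $\eta$-ellipticity/clearing-out lemma, the Jacobian estimates in negative norms, the density lower bound and the rectifiability of $\|V\|:=\mu-\tfrac12|\psi|^2 d\mu_g$, the verification that the absolutely continuous part of the limit is \emph{exactly} $\tfrac12|\psi|^2$ (i.e.\ that no diffuse energy beyond the harmonic part survives and no cross terms appear between the $|\log\ep|^{1/2}$-scale gradient and the concentrating vorticity), and the passage to the limit in $\Div T_\ep=0$ to obtain stationarity of $V$ --- these constitute the entire content of~\cite{Che, Ste} and of the Euclidean antecedents, and none of them is proved here. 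So your text should be read as an accurate roadmap of the known proofs, appropriate for a result the paper itself cites without proof, but not as a self-contained argument; if the task were to supply a proof, the ``middle step'' you defer is not a technical afterthought but the theorem itself.
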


\begin{thm}[Pigati--Stern, \cite{PiSt}]
\label{thm:YMH-conv}
Let $L$ be a Hermitian line bundle over a closed Riemannian manifold $(M, g)$. For each $\ep > 0$, suppose $(u_\ep, D_\ep)$ is a critical point of $F_{\ep}$, satisfying
\begin{equation}\label{eq:YMH-energybound}
F_{\ep}(u_{\ep}, D_{\ep}) \leq C < \infty \text{ for all }\ep > 0.
\end{equation}
Then, up to taking a subsequence, there exists a stationary \textit{integral} $(n -2)$-varifold $V$ such that, as $\ep \to 0$,
\begin{equation}
\label{eq:YMH-measure-conv}
e_{\ep}(u_{\ep}, D_{\ep}) d\mu_g \to 2\pi \|V\| \text{ as measures on }M.
\end{equation}
\end{thm}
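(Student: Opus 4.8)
The plan is to adapt the Lin--Rivi\`ere/Jerrard--Soner analysis of Ginzburg--Landau vortices to the gauge-invariant setting, as carried out by Pigati--Stern. The feature that makes this cleaner than the pure Ginzburg--Landau case is that in the self-dual regime a vortex carries the quantized energy $2\pi$ rather than a logarithmically divergent amount, which is why no renormalization by $|\log\ep|$ is needed.

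\textbf{Step 1 (monotonicity and compactness).} Criticality of $(u_\ep, D_\ep)$ under compactly supported diffeomorphisms of $M$ forces the stress--energy tensor $S_\ep$ of $F_\ep$ to be divergence-free. Contracting $\Div S_\ep = 0$ with the radial vector field on a small geodesic ball $B_r(x)$ and integrating yields an almost-monotonicity inequality for $r \mapsto e^{Cr}r^{2-n}\int_{B_r(x)}e_\ep(u_\ep, D_\ep)\,d\mu_g$, with an additive error controlled by $\int_{B_r(x)}\ep^{-2}(1-|u_\ep|^2)^2$ (which the same computation shows is itself small once $r \gg \ep$). Combined with \eqref{eq:YMH-energybound} this gives that the energy measures $\mu_\ep := e_\ep(u_\ep, D_\ep)\,d\mu_g$ have uniformly bounded mass, so along a subsequence $\mu_\ep \rightharpoonup \mu$ weakly-$\ast$; passing to the limit in the monotonicity inequality yields two-sided bounds for the $(n-2)$-density of $\mu$ at each point of $\Sigma := \supp\mu$, in particular $\mu \ll \cH^{n-2}$ with density in some interval $[c, C]$. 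One also records the canonically associated $(n-2)$-varifolds $V_\ep$ normalized so that $\|V_\ep\| = \tfrac{1}{2\pi}\mu_\ep$ (built from the concentration structure of the energy, i.e.\ the directions in which $Du_\ep$ is small versus large); after a further subsequence $V_\ep \rightharpoonup V$, so $\|V\| = \tfrac{1}{2\pi}\mu$.

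\textbf{Step 2 (stationarity and rectifiability).} The crux of the soft part is to show that the appropriate discrepancy (self-dual deficit measuring the failure of the Bogomol'nyi relations, together with the portion of $e_\ep$ supported away from the vortex set) tends to zero as a measure. Granting this, the first variation of $V_\ep$ differs from $\tfrac{1}{2\pi}\int_M\langle S_\ep, \nabla X\rangle\,d\mu_g = 0$ only by terms controlled by the discrepancy, so $\delta V_\ep(X) \to 0$ for every $X \in \fX(M)$, and since $V_\ep \rightharpoonup V$ we obtain $\delta V = 0$: $V$ is stationary. Together with the density bounds from Step 1, Allard's rectifiability theorem then shows $V$ is an $(n-2)$-rectifiable varifold.

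\textbf{Step 3 (vorticity and integrality).} Introduce the gauge-invariant $2$-form $\omega_\ep := F_{D_\ep} - d j_\ep$ with $j_\ep := \re\langle\sqrt{-1}u_\ep, Du_\ep\rangle$. It is closed, its cohomology class is $2\pi c_1(L)$, and differentiating $j_\ep$ and using the curvature identity for $D$ together with Young's inequality gives the pointwise bound $|\omega_\ep| \leq 2\,e_\ep(u_\ep, D_\ep)$; hence $\tfrac{1}{2\pi}\star\omega_\ep$ has uniformly bounded mass and, along a subsequence, converges in the flat topology to a closed $(n-2)$-current $T$ with $\|T\| \leq \tfrac{1}{\pi}\mu$. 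A Jerrard--Soner-type lower bound --- asserting that whenever $u_\ep$ has nonzero winding on a small sphere linking $\Sigma$ the enclosed energy is at least $2\pi$ times the absolute degree --- shows, via a closure/compactness argument, that $T$ is an \emph{integral} cycle, and, matched against the corresponding upper bound coming from the model self-dual vortex, forces the density of $\|V\|$ to be a positive integer $\cH^{n-2}$-a.e.\ on $\Sigma$. Thus $V$ is integral and $\mu = 2\pi\|V\|$, completing the proof.

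\textbf{The main obstacle.} The decisive difficulties are in Steps 2 and 3, at the natural length scale $\ep$ around the concentration set: establishing the discrepancy/equipartition estimate (including that no non-vortex energy survives in the limit) and the local degree lower bound. Both require putting the connection in a good (Coulomb) gauge on small balls so the system can be treated perturbatively, together with clearing-out lemmas --- if $\mu_\ep(B_r(x)) \leq \eta\, 2\pi r^{n-2}$ with $\eta$ small then the energy dissipates in $B_{r/2}(x)$ --- to propagate smallness of the energy. The saving grace is that the topological content of Step 3, namely that the limiting vorticity is an integral rather than merely a real cycle, makes integrality directly accessible here, in contrast to the Allen--Cahn setting where it is a far deeper matter.
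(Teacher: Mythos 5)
There is nothing in the paper to compare against here: Theorem~\ref{thm:YMH-conv} is quoted from Pigati--Stern~\cite{PiSt} and is used as a black box (only its conclusion enters, in Section 3 via the analogous GL statement and in the discussion of the YMH limit); the paper contains no proof of it. Judged on its own, your outline does track the architecture of the actual Pigati--Stern argument: criticality under inner variations gives a divergence-free stress--energy tensor and hence almost-monotonicity of the rescaled energy; the energy measures subconverge and acquire two-sided $(n-2)$-density bounds; stationarity plus Allard gives rectifiability; and the gauge-invariant vorticity $2$-form $\omega_\ep = F_{D_\ep} - dj_\ep$, with $[\omega_\ep] = 2\pi c_1(L)$ and $|\omega_\ep| \lesssim e_\ep(u_\ep,D_\ep)$, carries the topological information whose flux quantization ultimately yields integrality. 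The observation that self-duality makes the vortex energy $2\pi$ rather than logarithmically divergent, so that no $|\log\ep|$ normalization appears, is also the right explanation for the form of \eqref{eq:YMH-measure-conv}.

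However, as a proof this is a skeleton, not an argument: every step that actually requires work is asserted rather than established, and you say as much yourself in ``The main obstacle.'' Concretely, what is missing is precisely the analytic core of \cite{PiSt}: (i) the a priori estimates and exponential decay of $1-|u_\ep|^2$ and of the transverse energy away from the vortex set, proved via Coulomb gauge fixing and elliptic estimates at scale $\ep$, together with the clearing-out ($\ep$-regularity) lemma you invoke; (ii) the proof that the discrepancy (failure of equipartition/self-duality) vanishes as a measure, which is what converts $\Div S_\ep = 0$ into stationarity of the limit varifold -- without it Step~2 is circular; (iii) the gauged Jerrard--Soner-type lower bound ``energy $\geq 2\pi\,|\text{degree}|$ on small linking spheres'' and the matching upper bound from the model vortex, which together force the density of $\mu/2\pi$ into $\NN$ $\cH^{n-2}$-a.e.; and (iv) even the definition of your intermediate varifolds $V_\ep$ (``built from the concentration structure of the energy'') is left vague, whereas one must extract the limiting tangential structure from the stress--energy tensors or work with generalized varifolds. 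Also note that flat convergence of $\tfrac{1}{2\pi}\star\omega_\ep$ to an integral cycle $T$ does not by itself give integrality of $V$: one still has to identify $\|V\|$ with (a multiple of) $\|T\|$, i.e.\ rule out limit energy unaccounted for by the vorticity, which is again the discrepancy estimate. So the proposal is a correct road map of the cited proof, but it does not constitute a proof of the theorem.
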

Note that the varifold in Theorem~\ref{thm:YMH-conv} has integer multiplicity, whereas no such claims are made in Theorem~\ref{thm:GL-conv}. Indeed the integrality of the limiting varifold in Theorem~\ref{thm:GL-conv} remains an open problem.

Given results of the above type, which relates the first variations of $E_{\ep}$ and $F_{\ep}$ to that of the volume, it is natural to ask how the second variations and Morse indices of critical points are related. The purpose of the present work is to show that some of the results of Simons~\cite{Sim} and Lawson--Simons~\cite{LS} on stable varifolds/currents in $S^{n}$ and $\CC\PP^{n}$ do have analogues for $E_{\ep}$ and $F_{\ep}$. Our main results are stated below. Throughout this paper we assume that $S^{n}$ is equipped with the round metric, and $\CC\PP^{n}$ the Fubini--Study metric. We begin with results on the GL functional.
\begin{thm}\label{thm:GL-no-stable}
Every stable solution of~\eqref{eq:GL-eq} on $S^{n}$ for $n \geq 2$ is necessarily constant with absolute value $1$, regardless of the value of $\ep$.
\end{thm}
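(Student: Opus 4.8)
The plan is to mimic the Lawson--Simons computation: use the conformal vector fields on $S^n$ coming from gradients of the first eigenfunctions (the restrictions of linear coordinate functions from $\RR^{n+1}$), generate a family of diffeomorphisms, pull back the solution $u$, and compute the second variation of $E_{\ep}$. Concretely, for a fixed vector $a \in \RR^{n+1}$ let $X_a = \nabla f_a$ where $f_a(x) = \langle a, x\rangle|_{S^n}$, let $\phi_t$ be the flow of $X_a$, and consider the variation $u_t = u \circ \phi_t$. Since $E_\ep$ is not diffeomorphism-invariant (the potential term is not), the first variation of $t \mapsto E_\ep(u_t)$ need not vanish at a general critical point; but we do not need it to. Instead I would average the second variation over an orthonormal basis $a = e_1, \dots, e_{n+1}$ of $\RR^{n+1}$ (or integrate over $a \in S^n$), exploiting the symmetry of the round sphere so that cross terms collapse and the averaged quantity becomes a clean integral expression in $u$, $\nabla u$, and $\frac{(1-|u|^2)^2}{4\ep^2}$.

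The key steps, in order: (1) Record the standard identities on $S^n$: $\nabla f_a = a^\top$ (tangential projection of $a$), $\Delta f_a = -n f_a$, and $\nabla^2 f_a = -f_a\, g$, so each $X_a$ is a conformal field with $\mathcal{L}_{X_a} g = -2 f_a\, g$. (2) Write the second variation $Q(X_a) := \frac{d^2}{dt^2}\big|_{t=0} E_\ep(u \circ \phi_t)$ as an integral over $M$; expand using that $\phi_t^* g$ evolves by the conformal factor, so the Dirichlet term contributes terms involving $f_a$ and $df_a$ contracted with $\nabla u$, and the potential term contributes $\int \frac{(1-|u|^2)^2}{4\ep^2}$ times quantities built from $f_a$ and its derivatives (coming from the volume form $\phi_t^* d\mu_g$). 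One should also use the critical point equation~\eqref{eq:GL-eq} to eliminate first-order terms after summing. (3) Sum over $a$ in an orthonormal basis, using $\sum_a f_a^2 = 1$, $\sum_a f_a\, \nabla f_a = 0$, $\sum_a \nabla f_a \otimes \nabla f_a = g - (\text{radial part}) = g_{S^n}$ on tangent vectors, and $\sum_a |\nabla f_a|^2 = n$. The point is that $\sum_a Q(X_a)$ reduces to something like $-c_n \int_M |\nabla u|^2 - c_n' \int_M \frac{(1-|u|^2)^2}{\ep^2}$ with strictly positive constants $c_n, c_n'$ when $n \geq 2$, hence is strictly negative unless $u$ is constant with $|u|^2 = 1$. (4) Conclude: if $u$ is stable then $Q(X_a) \geq 0$ for all $a$, so $\sum_a Q(X_a) \geq 0$, forcing $\nabla u \equiv 0$ and $|u| \equiv 1$; the equation then just says $u$ is such a constant, and conversely these are obviously stable (in fact local minimizers).

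I expect the main obstacle to be the bookkeeping in step (2)--(3): correctly expanding $E_\ep(u\circ\phi_t)$ to second order requires differentiating the pulled-back metric and volume form twice, and it is easy to mishandle the $\mathcal{L}_{X_a}$ terms or to drop a contribution from the Hessian $\nabla^2 f_a$. A cleaner route that avoids flows is to work directly with the normal variation: since $E_\ep(u \circ \phi_t)$ differs from a diffeomorphism-invariant quantity only through the potential, one can instead test stability with the \emph{functional} variation $w = \langle \nabla u, X_a\rangle = X_a(u)$ (i.e. $\dot u_t|_{t=0}$), plug $w$ into the second-variation form of $E_\ep$ at $u$, and use the Bochner/Reilly-type identities for conformal fields on $S^n$ together with~\eqref{eq:GL-eq} to simplify. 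Either way, the dimension hypothesis $n \geq 2$ should enter exactly through the sign of the constant multiplying $\int |\nabla u|^2$ after summing over the basis, so I would keep careful track of that coefficient; the borderline arithmetic there is the one place where a sign error would sink the argument.
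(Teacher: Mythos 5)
Your overall strategy is the paper's: take $X_a=\nabla f_a$, compute the second inner variation $Q(X_a)=\tfrac{d^2}{dt^2}\big|_{t=0}E_\ep(u\circ\phi_t)$, and sum over an orthonormal basis of $\RR^{n+1}$. But the claimed outcome of the averaging in your step (3) is not what the computation gives, and this is a genuine gap. The energy density $e_\ep(u)$ (including the potential) enters the inner second variation only through the coefficient $(\Div X)^2-\Ric(X,X)-\langle\nabla_{e_i}X,e_j\rangle\langle\nabla_{e_j}X,e_i\rangle$, and for $X=X_{\xi}$ on $S^n$ this coefficient is $n^2f_\xi^2-(n-1)(1-f_\xi^2)-nf_\xi^2$, whose sum over the basis is exactly zero. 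Hence no potential term survives the averaging at all, and the remaining gradient terms sum to
\begin{equation*}
\sum_{i=1}^{n+1}Q(X_{\xi_i})=-(n-2)\int_{S^n}|\nabla u|^2\,d\mu_g,
\end{equation*}
as in~\eqref{eq:2nd-innvar-avg} of the paper, not $-c_n\int|\nabla u|^2-c_n'\int\frac{(1-|u|^2)^2}{\ep^2}$ with $c_n,c_n'>0$. Two consequences: (i) the averaging does not by itself force $|u|=1$; one concludes only that $u$ is constant, and then uses the equation~\eqref{eq:GL-eq} plus the easy instability of $u\equiv 0$ to rule out the zero constant; (ii) much more seriously, the coefficient $n-2$ vanishes precisely at $n=2$, which your theorem statement includes, so for $n=2$ the averaged inequality $0\le\sum_i Q(X_{\xi_i})=0$ yields no information about $\nabla u$. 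The paper closes this case with an extra argument you do not have: stability plus the vanishing sum forces each $v_i=\langle\nabla u,X_{\xi_i}\rangle$ to lie in the kernel of the Jacobi operator $L_u$; commuting $\Delta$ past $\nabla$ (Bochner, using $\Ric=(n-1)g$, $\Delta f_{\xi}=-nf_{\xi}$, $\nabla^2 f_\xi=-f_\xi g$) then gives $(n-2)\langle\nabla u,X_{\xi_i}\rangle+2\langle\nabla^2u,\nabla X_{\xi_i}\rangle=0$, which at $n=2$ reduces to $f_{\xi_i}\Delta u=0$ for every $i$, hence $\Delta u\equiv0$ and $u$ is constant. Your ``cleaner route'' remark gestures at testing with $w=\langle\nabla u,X_a\rangle$, but without the kernel observation and the commutation identity it does not resolve the borderline case.

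One smaller point: your statement that the first variation of $t\mapsto E_\ep(u\circ\phi_t)$ ``need not vanish at a general critical point'' is incorrect. By the chain rule (Proposition~\ref{prop:inner-outer} in the paper), $\frac{d}{dt}E_\ep(u\circ\phi_t)\big|_{t=0}=\delta E_\ep(u)(\nabla_X u)=0$ at any critical point, and more importantly $\frac{d^2}{dt^2}E_\ep(u\circ\phi_t)\big|_{t=0}=\delta^2E_\ep(u)(\nabla_Xu,\nabla_Xu)+\delta E_\ep(u)(\nabla_X\nabla_Xu)$, with the last term vanishing by criticality. This identity is exactly what lets you deduce $Q(X_a)\ge0$ from stability (which is defined via outer variations), so it needs to be stated rather than sidestepped; in your write-up the implication ``stable $\Rightarrow Q(X_a)\ge0$'' is asserted but not justified.
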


\begin{thm}\label{thm:GL-index-bound}
Suppose $n \geq 3$. For all $ C> 0$, there exists $\ep_{0} > 0$ such that if $u$ is a solution on $S^{n}$ to~\eqref{eq:GL-eq} with $\ep < \ep_{0}$, and if
\begin{equation}\label{eq:E_0-bound}
C^{-1}|\log\ep| \leq E_{\ep}(u) \leq C|\log\ep|,
\end{equation}
then the Morse index of $u$ as a critical point of $E_{\ep}$ is at least $2$.
\end{thm}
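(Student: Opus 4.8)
The plan is to use the Lawson--Simons conformal vector fields on $S^n$ to produce, once $\ep$ is small, a \emph{two}-dimensional subspace of $W^{1,2}(S^n;\CC)$ on which the second variation $Q_\ep$ of $E_\ep$ at $u$ is negative definite. For $v\in\RR^{n+1}$ let $V_v$ be the restriction to $S^n\subset\RR^{n+1}$ of the field $x\mapsto v-\langle x,v\rangle x$; it is a conformal Killing field with $\nabla_X V_v=-\langle x,v\rangle X$, and its flow $\phi^v_t$ consists of conformal diffeomorphisms. For a solution $u$ of \eqref{eq:GL-eq}, the field $\varphi_v:=du(V_v)\in C^\infty(S^n;\CC)$ is linear in $v$, and since $u$ is critical, $Q_\ep(\varphi_v,\varphi_v)=\frac{d^2}{dt^2}\big|_{t=0}E_\ep(u\circ\phi^v_t)$. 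Using $E_\ep(u\circ\phi^v_t)=E_\ep^{e^{2\sigma_t}g}(u)$ with $E_\ep^{e^{2\sigma}g}(u)=\int_{S^n}\big(\tfrac12 e^{(n-2)\sigma}|\nabla u|^2+\tfrac1{4\ep^2}e^{n\sigma}(1-|u|^2)^2\big)d\mu_g$ and $\sigma_0=0$, a direct differentiation gives
\[
Q_\ep(\varphi_v,\varphi_v)=\int_{S^n}\Big[\big((n-2)(n-1)\langle x,v\rangle^2-(n-2)|v|^2\big)\tfrac{|\nabla u|^2}{2}+\big(n(n+1)\langle x,v\rangle^2-n|v|^2\big)\tfrac{(1-|u|^2)^2}{4\ep^2}\Big]d\mu_g ,
\]
which can also be derived from the second variation formula for $E_\ep$ together with the Bochner identity on $S^n$ and \eqref{eq:GL-eq}. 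Summing over an orthonormal basis $\{e_a\}$ of $\RR^{n+1}$ and using $\sum_a\langle x,e_a\rangle^2=1$ recovers $\sum_a Q_\ep(\varphi_{e_a},\varphi_{e_a})=-(n-2)\int_{S^n}|\nabla u|^2\,d\mu_g$, which already forces index $\ge1$ for non-constant $u$ when $n\ge3$ (the mechanism behind Theorem~\ref{thm:GL-no-stable} in those dimensions). The goal is to improve $1$ to $2$ using the hypotheses on $\ep$ and the energy.

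I would argue by contradiction: suppose there are $\ep_j\downarrow0$ and solutions $u_j$ on $S^n$ with $C^{-1}|\log\ep_j|\le E_{\ep_j}(u_j)\le C|\log\ep_j|$ and $\operatorname{ind}(u_j)\le1$. By Theorem~\ref{thm:GL-conv}, after a subsequence, $\frac1{|\log\ep_j|}e_{\ep_j}(u_j)\,d\mu_g\rightharpoonup\tfrac12|\psi|^2d\mu_g+\|V\|$; since $H^1(S^n;\RR)=0$ we have $\psi=0$, and testing against the constant $1$ gives $m:=\|V\|(S^n)=\lim_j\frac1{|\log\ep_j|}E_{\ep_j}(u_j)\in[C^{-1},C]$, so $V$ is a \emph{nonzero} stationary rectifiable $(n-2)$-varifold. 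Using in addition the standard refinement (see \cite{Che,Ste}) that the potential part of the normalized energy is negligible, $\frac1{|\log\ep_j|}\frac{(1-|u_j|^2)^2}{4\ep_j^2}d\mu_g\rightharpoonup0$, we get $\frac1{2|\log\ep_j|}|\nabla u_j|^2d\mu_g\rightharpoonup\|V\|$. Feeding this into the displayed formula, the quadratic forms $q_j(v):=Q_{\ep_j}(du_j(V_v),du_j(V_v))$ on $\RR^{n+1}$ converge, as quadratic forms on this fixed finite-dimensional space, to
\[
q_\infty(v):=\int_{S^n}\big((n-2)(n-1)\langle x,v\rangle^2-(n-2)|v|^2\big)\,d\|V\|(x)=(n-2)\big((n-1)v^\top Q v-m|v|^2\big),
\]
where $Q:=\int_{S^n}xx^\top\,d\|V\|(x)\succeq0$ and $\operatorname{tr}Q=m$; one recognizes $q_\infty(v)$ as the Lawson--Simons second variation $\delta^2\|V\|(V_v)$ of the area of the $(n-2)$-varifold $V$.

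Next I would record the elementary spectral fact that encodes the codimension-two count. If $\mu_1\ge\cdots\ge\mu_{n+1}\ge0$ are the eigenvalues of $Q$, then $\sum_i\mu_i=m$, so the two smallest satisfy $\mu_n,\mu_{n+1}\le m/n$; hence the corresponding eigenvalues $(n-2)((n-1)\mu_i-m)$ of $q_\infty$ are $\le-\tfrac{(n-2)m}{n}<0$ (using $n\ge3$ and $m>0$). Fix a $2$-plane $P\subset\RR^{n+1}$ with $q_\infty(v)\le-\tfrac{(n-2)m}{n}|v|^2$ for $v\in P$. For $j$ large, the convergence above yields $\tfrac1{|\log\ep_j|}q_j(v)\le-\tfrac{(n-2)m}{2n}|v|^2$ for all $v\in P$; in particular $q_j(v)<0$, hence $du_j(V_v)\not\equiv0$, for every $v\in P\setminus\{0\}$. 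Since $v\mapsto du_j(V_v)$ is linear, it is injective on $P$, so $Q_{\ep_j}$ is negative definite on its two-dimensional image in $W^{1,2}(S^n;\CC)$; thus $\operatorname{ind}(u_j)\ge2$, contradicting the assumption and proving the theorem.

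The main obstacle — indeed the only genuinely analytic ingredient — is the identification of $q_\infty$: it rests on the refined form of Theorem~\ref{thm:GL-conv} in which the gradient part of the normalized energy density converges \emph{on its own} to $\tfrac12|\psi|^2d\mu_g+\|V\|$, with the potential part tending to $0$. This separation is essential because the potential term scales by $e^{n\sigma}$ rather than $e^{(n-2)\sigma}$ under the conformal deformations, so its contribution to $q_j$ is not sign-controlled and, if it persisted in the limit, would not generally have two negative eigenvalues. The remaining pieces — the conformal computation of $Q_\ep(\varphi_v,\varphi_v)$, the one-line eigenvalue estimate, and the transfer of negative-definiteness from $\RR^{n+1}$ to $W^{1,2}$ — are routine.
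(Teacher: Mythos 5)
Your argument is correct, and it reaches the conclusion along the same broad contradiction scheme as the paper (limit varifold via Theorem~\ref{thm:GL-conv}, conformal fields $X_\xi$, a negative two-plane in $\RR^{n+1}$, transfer from inner to outer variations), but the key lemmas you use are genuinely different, and each substitution checks out. First, you bypass Proposition~\ref{prop:2nd-innvar-lim} entirely: instead of the general second-inner-variation formula and its convergence to $\delta^2 V$ plus a nonnegative remainder, you exploit that the flow of $X_v$ acts by conformal changes of the round metric and obtain a closed-form scalar expression for $\delta^2E_\ep(u)(X_v,X_v)$. I verified that your displayed formula agrees with what Proposition~\ref{prop:2nd-var} gives for $X=X_\xi$ (it differs from~\eqref{eq:2nd-innvar-conKill} pointwise exactly by the first inner variation in the direction $\nabla_{X_\xi}X_\xi$, which vanishes for solutions), and its trace over an orthonormal basis reproduces~\eqref{eq:2nd-innvar-avg}. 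The price of folding the volume factor into the potential term is that you need the refined convergence $\tfrac1{2|\log\ep_j|}|\nabla u_j|^2d\mu_g\rightharpoonup\|V\|$ with the potential part tending to zero, rather than just $\tfrac1{|\log\ep_j|}e_{\ep_j}d\mu_g\rightharpoonup\|V\|$; this is legitimately available, since it is precisely the trace of the $A_{ij}$-convergence of~\cite[Section 7]{Che} (with $\tr A=2$) that the paper's own Proposition~\ref{prop:2nd-innvar-lim} relies on, so your citation carries the same analytic weight as the paper's. Second, you replace the appeal to Simons' argument behind~\eqref{eq:varifold-unstable} by the explicit spectral observation on $Q=\int xx^{\top}d\|V\|$ with $\tr Q=m>0$, which yields the quantitative negative two-plane directly and self-containedly. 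Third, your handling of the final linear-independence step is cleaner than the paper's: strict negativity of $q_j$ on $P\setminus\{0\}$ immediately rules out $du_j(V_v)\equiv 0$, whereas the paper argues that invariance under the conformal flow forces $u_j$ to be constant and then invokes the energy bounds~\eqref{eq:E_0-bound}. What each route buys: the paper's Proposition~\ref{prop:2nd-innvar-lim} is a general statement (arbitrary vector fields, any closed $M$ with $b_1(M)=0$) of independent interest and identifies the limit as the varifold second variation plus an explicit nonnegative defect; your route is shorter and more elementary for this specific theorem, at the cost of being tied to the conformal structure of $S^n$.
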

\begin{thm}\label{thm:GL-no-stable-CPn}
For $n \geq 1$, every stable solution to~\eqref{eq:GL-eq} on $\CC\PP^{n}$ is necessarily constant with absolute value $1$, regardless of the value of $\ep$.
\end{thm}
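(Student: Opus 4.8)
The plan is to mimic the Lawson--Simons argument, testing the stability inequality for $E_\ep$ with ambient deformations of the domain $\CC\PP^n$ generated by suitable vector fields, and then summing over a well-chosen family of such fields so that the error terms cancel. Concretely, if $X$ is a vector field on $\CC\PP^n$ and $\phi_t$ its flow, then composing a solution $u$ with $\phi_t$ produces a variation $u_t = u \circ \phi_t$ whose second derivative at $t=0$ gives a quadratic form $Q(X)$ in $X$; stability of $u$ means $Q(X) \geq 0$ for all $X$. The key computation is to expand $Q(X)$ explicitly in terms of $\nabla u$, the potential $(1-|u|^2)^2/(4\ep^2)$, and the covariant derivatives and curvature contractions of $X$ with the Fubini--Study metric. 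I would first carry out this second-variation computation for a general $X$, being careful to track how the conformal/Killing defect of $X$ couples to the energy density $e_\ep(u)$; the structure should be $Q(X) = \int_M \big[ \text{(gradient terms)} + \text{(potential terms)} + \text{(curvature/Hessian terms in }X) \big]$.

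Next, following~\cite{LS} for the $\CC\PP^n$ case, I would choose the vector fields to be of the form $X = J\nabla f$ (or $\nabla f$) where $f$ ranges over first eigenfunctions of the Laplacian on $\CC\PP^n$ and $J$ is the complex structure — these are exactly the fields Lawson--Simons use, and on $\CC\PP^n$ the relevant ones are related to the holomorphic Killing fields / the isometric $SU(n+1)$-action. The crucial algebraic fact is that, after summing $Q(X)$ over an orthonormal basis of first eigenfunctions, the sum of the Hessian-type terms $\sum \nabla^2 f \otimes \nabla^2 f$ and the curvature contractions collapse to a constant multiple of the metric (by Schur-type averaging and the symmetry of $\CC\PP^n$), so that $\sum Q(X) = -c \int_M e_\ep(u)\, d\mu_g + (\text{lower-order, sign-favorable terms})$ with $c > 0$, at least after also handling the potential term. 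Because the potential term $(1-|u|^2)^2/(4\ep^2) \geq 0$ and enters $\sum Q(X)$ with a favorable sign (or can be absorbed), stability forces $\int_M e_\ep(u) = 0$, hence $\nabla u \equiv 0$ and $|u| \equiv 1$, so $u$ is a constant of modulus one.

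The main obstacle I anticipate is the bookkeeping in the averaged second-variation identity: on $\CC\PP^n$ one does not get a single clean eigenfunction family as on $S^n$, and one must combine the gradient fields $\nabla f$ and the fields $J\nabla f$ (the latter being Killing) so that the indefinite cross terms and the Ricci/sectional-curvature contractions of the Fubini--Study metric conspire to produce a strictly negative coefficient on $\int e_\ep(u)$. In particular, for $J\nabla f$ with $f$ a first eigenfunction the flow is by isometries, so $Q(J\nabla f)$ contributes no gradient error but does contribute a potential and curvature term, while $\nabla f$ contributes the conformal-type terms; getting the constants to line up (and to be $\ep$-independent) is the delicate part, and is presumably where the Fubini--Study curvature identities from~\cite{LS} and~\cite{Sim} do the real work. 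A secondary point to check is that the argument is genuinely dimension-free down to $n=1$, i.e.\ that the $\CC\PP^1 = S^2$ case is consistent with Theorem~\ref{thm:GL-no-stable} — which it should be, since $S^2$ is covered there.
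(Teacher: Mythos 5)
The central step of your plan---that averaging the inner second variation over the family of first-eigenfunction fields produces $\sum Q(X) = -c\int_{\CC\PP^n} e_\ep(u)\,d\mu_g$ with $c>0$---is exactly what fails on $\CC\PP^n$. The paper's key computation (Proposition~\ref{prop:GL-CPn}) shows that for any orthonormal basis $V_1,\dots,V_q$ of Killing fields, $\sum_k \delta^2E_\ep(u)(JV_k,JV_k)=0$ identically: the Fubini--Study curvature contractions cancel completely, with no strictly negative term left over. This is the same phenomenon as in Lawson--Simons, where on $\CC\PP^n$ one cannot conclude that stable currents vanish, only that they are complex cycles; there is no reason to expect a strictly negative trace here. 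Moreover, mixing in the Killing fields $J\nabla f$ themselves buys nothing, since their flows are isometries and their inner second variations vanish identically, and the potential term never enters with a usable sign---it only appears multiplied by coefficients whose trace is zero. So stability plus averaging alone yields $0\le 0$ and no contradiction; the situation is precisely the borderline case, analogous to $n=2$ on $S^n$ where the coefficient $n-2$ vanishes in~\eqref{eq:2nd-innvar-avg}.

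The missing ingredient is the second stage of the argument. Since by stability and Corollary~\ref{coro:inner-outer-coro} each term $\delta^2E_\ep(u)(JV,JV)=\delta^2E_\ep(u)(\nabla_{JV}u,\nabla_{JV}u)\ge 0$ while the sum vanishes, every $v=\nabla_{JV}u=\langle\nabla u,\nabla f\rangle$ (with $f$ a first eigenfunction, $\Delta f=-(n+1)f$) lies in the kernel of the linearized operator, i.e.\ $\Delta v=\frac{|u|^2-1}{\ep^2}v+\frac{2(u\cdot v)u}{\ep^2}$. Comparing this with the direct Bochner computation of $\Delta\langle\nabla u,\nabla f\rangle$, using $\Ric_g=\frac{n+1}{2}g$, forces $\langle\nabla^2 f,\nabla^2 u\rangle\equiv 0$ for every first eigenfunction $f$; then choosing at each point a specific eigenfunction whose Hessian there is a nonzero multiple of $g$ (e.g.\ $f_w$ with $w=\mathrm{diag}(1,-\tfrac1n,\dots,-\tfrac1n)$ at $[1,0,\dots,0]$) gives $\Delta u\equiv 0$, hence $u$ constant, and ruling out $u\equiv 0$ by instability gives $|u|=1$. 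Without this kernel-plus-Bochner step your proposal does not close, and with the strictly-negative-trace expectation removed, the averaging alone cannot.
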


For the abelian YMH functional, we prove the following. The class $\cC$ in the statement is defined in Section 5.
\begin{thm}\label{thm:YMH-no-stable}
For $n \geq 4$ and $\ep > 0$, suppose $(u, D) \in \cC$ is a stable weak solution of~\eqref{eq:YMH-eq} on $S^{n}$. Then the bundle $L$ is trivial, and $(u, D)$ is gauge equivalent to $(1, d)$.
\end{thm}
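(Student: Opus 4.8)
The plan is to follow the Lawson--Simons strategy, using conformal vector fields on $S^n$ to generate deformations of $(u, D)$ that test the second variation. Recall that on the round $S^n \subset \RR^{n+1}$, for each fixed vector $v \in \RR^{n+1}$ the tangential projection $X_v = v^T$ is a conformal Killing field, and these fields are gradients of first eigenfunctions of the Laplacian. The idea is to deform $u$ by the flow of $X_v$ (pulling back the section along the flow, using parallel transport in $L$ to make sense of this) and to deform $D$ by Lie-dragging the connection form, then compute the second variation of $F_\ep$. Averaging the resulting quadratic form over an orthonormal basis $v = e_1, \dots, e_{n+1}$, the conformal factor contributions combine into clean dimensional constants, and for $n \geq 4$ one expects the averaged second variation to be strictly negative unless all the ``energy densities'' of $(u, D)$ vanish, i.e. unless $Du \equiv 0$, $F_D \equiv 0$, and $|u| \equiv 1$. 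The curvature term $\ep^2 |F_D|^2$ is the analogue of the Yang--Mills energy studied in \cite{BL}, where the relevant dimensional threshold for instability of the curvature part is also $n \geq 4$; the potential and kinetic terms behave like the harmonic-map/varifold terms in \cite{LS, Xin}, which are unstable already for $n \geq 3$. So the binding constraint is the gauge-field part, explaining the hypothesis $n \geq 4$.

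Here are the steps in order. First, I would record the second variation formula for $F_\ep$ at a critical point $(u, D)$ in the direction of a pair $(\phi, b)$, where $\phi$ is an infinitesimal variation of the section and $b$ a real $1$-form; because $(u,D)$ is a critical point one only needs the index-form $Q_{(u,D)}(\phi, b)$, with cross terms simplified using \eqref{eq:YMH-eq}. Second, for each $v$ I would choose the specific variation coming from the flow $\{\psi_t\}$ of $X_v$: set $D_t = \psi_t^* D$ and let $u_t$ be $u$ transported along $\psi_t$ by the connection, so that $(\phi_v, b_v) = \frac{d}{dt}\big|_{0}(u_t, D_t)$. This is gauge-natural, so $F_\ep(u_t, D_t) = F_\ep(\psi_t^*(u, D))$, reducing the computation to how the integrand transforms under the conformal diffeomorphism $\psi_t$ — here the conformal factor and its derivatives enter, and the pointwise algebra is where the terms $|Du|^2$, $\ep^2|F_D|^2$, $(1-|u|^2)^2/(4\ep^2)$ pick up their respective dimensional coefficients. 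Third, sum over an orthonormal basis: using $\sum_\alpha X_{e_\alpha} \otimes X_{e_\alpha} = g$ (up to the standard normalization) and the corresponding identities for the covariant derivatives of $X_v$, I would obtain $\sum_\alpha Q_{(u,D)}(\phi_{e_\alpha}, b_{e_\alpha}) = -\int_{S^n} \big( c_1(n)\,\ep^2|F_D|^2 + c_2(n)\,|Du|^2 + c_3(n)\,\tfrac{(1-|u|^2)^2}{4\ep^2}\big)\,d\mu_g$ for explicit constants $c_i(n)$, all strictly positive when $n \geq 4$. Fourth, stability forces the left side to be $\geq 0$, hence each density vanishes: $F_D \equiv 0$, $Du \equiv 0$, $|u| \equiv 1$. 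A flat connection on a bundle over the simply connected $S^n$ is gauge equivalent to $d$ on the trivial bundle, and under that gauge $Du \equiv 0$ with $|u| \equiv 1$ forces $u$ to be a unit constant, which can be rotated to $1$; thus $(u, D)$ is gauge equivalent to $(1, d)$.

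The main obstacle I anticipate is twofold. The computational obstacle is correctly tracking the conformal weights: $u$, $Du$, $F_D$, and the potential all transform differently under the conformal diffeomorphisms $\psi_t$ (and one must be careful that $D$ itself is \emph{not} conformally natural the way a metric-independent object would be — the Hodge star and the $L^2$ pairings defining $F_\ep$ carry the metric), so assembling the constants $c_i(n)$ with the right signs requires care, and it is here that the restriction $n \geq 4$ must emerge rather than being imposed. The conceptual/technical obstacle is regularity and the role of the class $\cC$: weak solutions of \eqref{eq:YMH-eq} need not be globally smooth because of gauge invariance, so I must work in a good gauge (e.g. a Coulomb or Hodge gauge provided by \cite{PiSt}) in which $(u,D)$ is smooth enough for the second-variation computation, and I must know that the stability hypothesis — presumably built into membership in $\cC$ together with an energy or integrability bound — applies to the specific test variations $(\phi_{e_\alpha}, b_{e_\alpha})$ constructed above. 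Verifying that these test variations are admissible competitors (in $W^{1,2}$, gauge-compatible, and that the index form is well-defined on them) is the step most likely to need the structural hypotheses encoded in $\cC$.
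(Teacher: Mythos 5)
Your overall strategy is the same as the paper's (average the second variation over the conformal gradient fields $X_v$, conclude vanishing of the energy densities, then use simple connectivity of $S^n$ to trivialize the bundle), and your choice of test variation --- the gauge-covariant Lie derivative, i.e.\ the pair $(D_X u,\iota_X F_D)$ --- is exactly the right one. But there is a genuine gap at the borderline dimension $n=4$, which is precisely the endpoint the theorem is designed to include. When the averaging is actually carried out, the potential term $\tfrac{(1-|u|^2)^2}{4\ep^2}$ drops out entirely (coefficient zero, just as in the Ginzburg--Landau case on $S^n$), and the resulting inequality is of the form
\begin{equation*}
0 \;\leq\; 4(4-n)\int_{S^n}\ep^2|F_D|^2\,d\mu_g \;+\; 2(2-n)\int_{S^n}|Du|^2\,d\mu_g ,
\end{equation*}
so your claim that all three constants $c_i(n)$ are strictly positive for $n\geq 4$ is false: the curvature coefficient vanishes exactly at $n=4$ (reflecting the conformal invariance of the Yang--Mills term in dimension four --- note that the threshold from \cite{BL} for the pure curvature part is $n\geq 5$, not $n\geq 4$, since stable Yang--Mills fields do exist on $S^4$). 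Consequently, for $n=4$ the averaged inequality only gives $Du\equiv 0$, and you need an extra argument to kill $F_D$: from the second equation in~\eqref{eq:YMH-eq}, $Du=0$ forces $d^{\ast}F_D=0$, and together with $dF_D=0$ and $H^2(S^4)=0$ this makes $F_D$ a harmonic, hence zero, $2$-form. Likewise $|u|\equiv 1$ does not come out of the averaged second variation at all; it is obtained at the end, after trivializing the bundle, from the first equation in~\eqref{eq:YMH-eq} (which gives $|u|\in\{0,1\}$ for the resulting constant) together with the observation that the zero solution $(0,d)$ is unstable.

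A second, smaller issue is the one you flagged yourself: a weak solution in $\cC$ is in general only \emph{locally} gauge equivalent to a smooth solution on contractible domains, so there is no single global good gauge in which to run the flow computation when $L$ is nontrivial. The paper handles this by doing the inner-variation computation separately on the two hemispheres in local smooth gauges, showing the boundary terms on the equator are gauge-invariant quantities that cancel (Lemma~\ref{lemm:patching}), and by justifying the application of the stability hypothesis to the global test pair $(D_Xu,\iota_X F_D)$ via cutoffs concentrating at a point, whose gradients have vanishing $L^2$ norm since $n\geq 2$ (Lemma~\ref{lemm:YMH-outer-global}). Your proposal correctly identifies this as the delicate point but does not supply a mechanism that works when no global smooth gauge exists; some patching argument of this kind is needed. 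Finally, note that flatness of $D$ only trivializes $L$ after one checks that the (a priori merely $W^{1,2}$) flat connection can be replaced by a smooth flat connection, which requires a short additional argument as in the paper.
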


\begin{rmk}
\begin{enumerate}
\item[(1)] The main computations involved in the proofs of Theorems~\ref{thm:GL-no-stable},~\ref{thm:GL-no-stable-CPn} and~\ref{thm:YMH-no-stable} (see Propositions~\ref{prop:2nd-var} and~\ref{prop:YMH-inner-stable}) are rather insensitive of the specific form of the potential term $\frac{(1 - |u|^2)^2}{4\ep^2}$. Hence these results should extend when $\frac{(1 - |u|^2)^2}{4\ep^2}$ is replaced by more general potentials $W(u)$ satisfying appropriate conditions. Theorem~\ref{thm:GL-index-bound}, on the other hand, does depend on the choice of potential, as some of the arguments in~\cite{Che, Ste} do. 
\vskip 1mm
\item[(2)] We believe that Theorem~\ref{thm:YMH-no-stable} holds for $n = 3$ as well. On the other hand, when $n = 2$ and the bundle $L$ is non-trivial, $F_{\ep}$ does have non-trivial stable critical points on $S^2$ (in fact on any compact Riemann surface $\Sigma$) if $\ep$ is not too large. These are given by solutions to the vortex equations:
\begin{equation}\label{eq:vortex-eq}
\left\{
\begin{array}{cl}
D u &= \pm \sqrt{-1}\ast Du,\\
\ep \ast F_D &= \pm\frac{1 - |u|^2}{2\ep}.
\end{array}
\right.
\end{equation}
Existence of solutions is essentially established in~\cite{Brad} and, using different methods, ~\cite{Gar}. These solutions are always stable because $F_{\ep}$ can be rewritten as
\begin{equation}\label{eq:YMH-reduction}
F_{\ep}(u, D) = \int_{\Sigma} \frac{1}{2}|Du\mp \sqrt{-1}\ast Du|^2 + \Big|\ep\ast F_D \mp \frac{1 - |u|^2}{2\ep}\Big|^2 d\mu_g \pm \int_{\Sigma}F_D,
\end{equation}
where the last term equals $2\pi$ times the degree of the bundle $L$ up to sign. By analogy with the work of Bourguignon--Lawson~\cite{BL} on the Yang--Mills functional on $S^4$, we suspect that stable critical points of $F_{\ep}$ on $S^{2}$ are in fact solutions to~\eqref{eq:vortex-eq}.
\end{enumerate}
\end{rmk}

Besides their analogy with~\cite{Sim} and~\cite{LS}, our results also continue a long line of work, going back to perhaps~\cite{Mat, CaHo}, on the instability of non-constant solutions to semi-linear equation/systems. For~\eqref{eq:GL-eq} in particular, Jimbo--Morita~\cite{JiMo} proved that any stable solution with Neumann condition on a convex set in $\RR^n$ is necessarily constant. A similar result was obtained by Jimbo--Sternberg~\cite{JiSt} on convex sets in $\RR^2$, for critical points of~\eqref{eq:YMH-defi} with a slightly more general potential term. Serfaty~\cite{Ser} extended the 2D case of the result of~\cite{JiMo} to simply-connected domains, but assuming in addition that $\ep$ is small. The method in~\cite{Ser} was later adapted by Chen~\cite{ChK} to prove the instability of non-constant solutions to~\eqref{eq:GL-eq} on a class of closed two-dimensional surfaces.

Below we summarize the proofs of our results. As in~\cite{LS}, Theorems~\ref{thm:GL-no-stable} and~\ref{thm:YMH-no-stable} are proved by computing the second derivatives of $E_\ep$ and $F_\ep$ with respect to diffeomorphisms generated by vector fields, and then taking the trace over a specific finite-dimensional space of conformal Killing vector fields on $S^n$. A similar idea also underlies many of the results mentioned above. Crucial to this strategy is relating the usual notion of stability to stability with respect to variations by diffeomorphisms. This is not hard for $E_{\ep}$, but slightly delicate for $F_{\ep}$, mainly because weak solutions may not be globally gauge equivalent to smooth solutions. Nonetheless, we manage to localize the computations and patch things together at the end. 

The proof of Theorem~\ref{thm:GL-index-bound} relies on comparing the second variation formulas for $E_{\ep}$ at $u_{\ep}$ with that of the limit varifold associated to $u_{\ep}$ via Theorem~\ref{thm:GL-conv}, and is similar in spirit to the approach in~\cite{Ser} and~\cite{ChK}. Under the assumption that $b_{1}(M) = 0$, we are able to get a ``convergence up to error term'' result for the second variations of $E_{\ep}$ that complements Theorem~\ref{thm:GL-conv} rather nicely and extends a result of Le~\cite{Le} to manifolds. See Proposition~\ref{prop:2nd-innvar-lim}. Theorem~\ref{thm:GL-index-bound} then follows by letting $M = S^{n}$ and combining a contradiction argument with the classical result of Simons~\cite{Sim}.

The strategy for proving Theorem~\ref{thm:GL-no-stable-CPn} is similar to that for Theorem~\ref{thm:GL-no-stable}, except that the trace is taken over a finite-dimensional space of real holomorphic vector fields on $\CC\PP^{n}$. Here we find that if $u$ is a stable solution, and if $X = \nabla f$ where $f$ is any eigenfunction for the lowest non-zero eigenvalue of $-\Delta$, then $v:= \langle X, \nabla u \rangle$ lies in the kernel of the quadratic form $\delta^2 E_{\ep}(u)$. Combining this with the B\^ochner formula and making suitable choices of $f$ gives the desired result.

The remainder of this paper is organized as follows: In Section 1.2 we recall some notation and terminology. In Section 2 we discuss two different notions of second variation for $E_\ep$. In particular we spell out what we mean by ``stable solutions''. In Section 3 we prove Theorems~\ref{thm:GL-no-stable} and~\ref{thm:GL-index-bound}. In Section 4 we prove Theorem~\ref{thm:GL-no-stable-CPn}. Section 5 is similar to Section 2 but concerns $F_{\ep}$. As opposed to Section 2, most of the calculations in Section 5 are done on domains over which the weak solution is gauge equivalent to a smooth solution. Finally, in Section 6, we specialize to $S^{n}$ and put the local computations in Section 5 together to prove Theorem~\ref{thm:YMH-no-stable}.

\vskip 2mm
\noindent\textbf{Acknowledgments.} I would like to thank Andre Neves for suggesting this problem and for numerous enlightening conversations. Thanks also go to Peter Sternberg for very helpful comments on an earlier version of this paper, and to the referee whose many suggestions significantly improved the exposition of the paper.

\subsection{Notation and terminology}\label{sec:notation}
Suppose we have a Riemannian manifold $(M, g)$. The volume measure induced by $g$ is denoted $\mu_g$. Our curvature convention is the following one:
\[
R_{X, Y}Z = \nabla_X\nabla_Y Z - \nabla_Y\nabla_X Z - \nabla_{[X, Y]}Z,
\]
and sometimes we write $R(X, Y, Z, W)$ for $\langle R_{X, Y}Z, W \rangle$. Note that we will often use pointed brackets $\langle \cdot, \cdot \rangle$ to denote the metric $g$ or any metric it induces on tensors bundles over $M$. For example, on $\wedge^{k}T^{\ast}M$ we write 
\[
\langle dx^{i_1} \wedge \cdots \wedge dx^{i_k},  dx^{j_1} \wedge \cdots \wedge dx^{j_k} \rangle = \det (g^{i_\lambda  j_\mu})_{1 \leq \lambda, \mu \leq k}.
\]
We maintain the use of pointed brackets even when the tensors have values in a complex line bundle $L$ equipped with a Hermitian bundle metric $\langle\cdot, \cdot\rangle_{L}$. For example, on the fiber $\big(\wedge^{k}T^{\ast}M \otimes L\big)_{x}$, if $\sigma, \tau \in L_{x}$,  we write
\[
\langle \sigma dx^{i_1} \wedge \cdots \wedge dx^{i_k}, \tau dx^{j_1} \wedge \cdots \wedge dx^{j_k} \rangle_{L} =\langle\sigma, \tau\rangle_{L}\det (g^{i_\lambda  j_\mu})_{1 \leq \lambda, \mu \leq k}.
\]
We often drop the subscript $L$ in $\langle \cdot, \cdot \rangle_{L}$ if no confusion arises from the omission. Also, we use the following cross product notation:
\begin{equation}\label{eq:cross-product}
u \times v = \re\langle \sqrt{-1}u, v \rangle_{L}, \text{ for }u, v \in L_{x}.
\end{equation}

Given a vector field $X$ on $M$, its (full) divergence is given by
\[
\Div_g X = g^{ij}\langle \nabla_{i}X, \partial_{j} \rangle = \sum_{i = 1}^{n}\langle \nabla_{e_{i}}X, e_{i} \rangle,
\]
where $e_1, \cdots, e_n$ is any orthonormal basis for $T_{x}M$. If $S$ is a $k$-dimensional vector subspace of $T_{x}M$, we define the divergence of $X$ at $x$ along $S$ to be
\[
\Div_{S} X = \sum_{i = 1}^{k}\langle \nabla_{\tau_i}X, \tau_i \rangle,
\]
where $\tau_{1}, \cdots, \tau_{k}$ is any orthonormal basis of $S$. The Laplace operator is given by $\Delta_g u = \Div_g \nabla u = g^{ij}\nabla^{2}_{i, j}u$, and is non-positive definite. 

Next we briefly review the standard metric on the complex projective space. The group $U(1)$ acts on $S^{2n + 1} \subset \CC^{n + 1}$ by isometries via
\[
(z_0, \cdots, z_n) \mapsto (e^{i\theta}z_0, \cdots, e^{i\theta}z_n).
\]
Thus there is a metric $g_{FS}$ on $\CC\PP^{n}$, the quotient of $S^{2n + 1}$ under this $U(1)$-action, such that the projection $(S^{2n + 1}, g_0) \to (\CC\PP^{n}, \frac{1}{4}g_{FS})$ is a Riemannian submersion, where $g_0$ is the round metric with constant curvature $1$. Under this normalization we have
\[
\Ric_{g_{FS}} = \frac{n + 1}{2}g_{FS},
\]
and the first non-zero eigenvalue of $-\Delta_{g}$ is equal to $n + 1$. In the standard coordinate charts, say in $(z_1, \cdots, z_n) \mapsto [\frac{1}{\sqrt{(1 + |z|^{2})}},\frac{z_1}{\sqrt{(1 + |z|^{2})}},\cdots,\frac{z_n}{{\sqrt{(1 + |z|^{2})}}}]$, the Fubini--Study metric has the form
\begin{equation}\label{eq:FS-local}
\langle \paop{z^{i}}, \paop{\bar{z}^{j}} \rangle = \frac{2}{(1 + |z|^{2})^{2}}\Big((1 + |z|^{2})\delta_{ij} - \bar{z}_{i}z_{j}  \Big).
\end{equation}
The metric $g_{FS}$ being K\"ahler, its curvature enjoys the following additional symmetries:
\begin{equation}\label{eq:kahler-curvature}
\langle R_{JX, JY}Z, W \rangle = \langle R_{X, Y}Z, W \rangle = \langle R_{X, Y}JZ, JW \rangle,
\end{equation}
where $J$ denotes the complex structure on $\CC\PP^{n}$. Other notation and terminology is introduced when it is needed.

\section{First and second variations of $E_{\ep}$}
\subsection{Preliminaries}
Suppose $(M, g)$ is a closed Riemannian manifold. We begin by discussing the first and second outer variations of $E_{\ep}$, which should be distinguished from the inner variations to be introduced later. Suppose $u \in W^{1, 2} \cap L^{4}(M; \CC)$ and $v: M \to \CC$ is smooth. The first outer variation of $E_{\ep}$ at $u$ in the direction of $v$ is by definition
\begin{equation}\label{eq:GL-1st-var}
\delta E_{\ep}(u)(v) = \frac{d}{dt}E_{\ep}(u + tv)\vert_{t = 0} = \int_{M} \langle \nabla u, \nabla v \rangle + \frac{|u|^{2} - 1}{\ep^{2}}u \cdot v\  d\mu_g,
\end{equation}
where by $u \cdot v$ we mean the usual inner product on $\CC\simeq \RR^{2}$. Of course, $u \in W^{1, 2} \cap L^{4}$ is a weak solution to~\eqref{eq:GL-eq} if and only if $\delta E_{\ep}(u)(v)= 0$ for all $v$ smooth, in which case Kato's inequality~\cite{ReSi} implies that, distributionally on $M$, 
\[
\Delta |u| \geq \frac{|u|^2 - 1}{\ep^2}|u| \geq -\frac{|u|}{\ep^2}.
\]
Since $|u| \in W^{1, 2}$, De Giorgi--Nash estimates imply that $u$ is bounded, and consequently $u$ is smooth by~\eqref{eq:GL-eq} and standard theory. The maximum principle applied to $|u|^2 - 1$ then gives $|u| \leq 1$ on $M$. Below, by ``solution to~\eqref{eq:GL-eq}'' we always mean a smooth solution.

Next we define the second outer variation of $E_{\ep}$ at a solution $u$ by
\begin{align}\label{eq:GL-2nd-var}
\nonumber \delta^{2}E_{\ep}(u)(v, v) &= \frac{d^{2}}{d t^2}E_{\ep}(u + tv)\vert_{t = 0}\\
& = \int_{M} |\nabla v|^2 + \frac{|u|^{2} - 1}{\ep^{2}}|v|^2 + \frac{2(u \cdot v)^2}{\ep^{2}} d\mu_g.
\end{align}
Since $u$ is bounded, $\delta^2 E_{\ep}(u)$ extends to a symmetric bilinear form on the real Hilbert space $W^{1, 2}(M; \CC)\simeq W^{1, 2}(M; \RR^2)$, with associated linear operator
\[
L_{u}v := -\Delta v + \frac{|u|^{2} - 1}{\ep^{2}}v + \frac{2(u \cdot v) u}{\ep^{2}}.
\]
The operator $L_{u}$ possesses a complete set of $L^2$-orthonormal eigenfunctions with eigenvalues 
\[
\lambda_1 \leq \lambda_2 \leq \cdots \to +\infty.
\]
We say that a solution $u$ to~\eqref{eq:GL-eq} is stable if $\lambda_1 \geq 0$, or equivalently when
\begin{equation}\label{eq:stable-defi}
\delta^{2}E_{\ep}(u)(v, v) \geq 0 \text{ for all }v \in W^{1, 2}(M; \CC).
\end{equation}
The index of $u$ as a critical point of $E_{\ep}$ is defined to be the number (counted with multiplicity) of negative eigenvalues of $L_u$. Moreover, the index of $u$ is at least $k$ if and only if there exists a $k$-dimensional subspace $V$ of $W^{1, 2}(M; \RR^2)$ such that $\delta^{2}E_{\ep}(u)$ restricted to $V$ is negative definite.

There is another way in which we can perform the variations. Assuming that $u: M \to \CC$ is smooth and that $X$ is a vector field on $M$, with $\varphi_{t}$ being the flow it generates, then the expression
\begin{equation}\label{eq:inner-pre-defi}
E_{\ep}(\varphi_{t}^{\ast} u) = \int_{M} \frac{|\nabla \varphi_{t}^{\ast}u|^{2}}{2} + \frac{(1 - |\varphi_{t}^{\ast}u|^{2})^{2}}{4\ep^{2}}d\mu_g
\end{equation}
is smooth in $t$, and we define the first and second inner variations of $E_{\ep}$ to be, respectively,
\[
\delta E_{\ep}(u)(X) = \frac{d}{dt}E_{\ep}(\varphi_{t}^{\ast} u) \vert_{t = 0},
\]
\[
\delta^{2} E_{\ep}(u)(X, X) = \frac{d^{2}}{dt^{2}}E_{\ep}(\varphi_{t}^{\ast} u) \vert_{t = 0}.
\]
Note that we use similar notation for the outer and inner variations. However, since one of them applies to functions and the other to vector fields, there should be no confusion. The precise formulae for the first and second inner variations will be given in Section 3. For now we note the relationship between the inner and outer variations.
\begin{prop}\label{prop:inner-outer}
Let $u: M \to \CC$ be a smooth function and let $X$ be a smooth vector field on $M$. Then we have
\begin{equation}\label{eq:1st-inner-outer}
\delta E_{\ep}(u)(X) = \delta E_{\ep}(u)(\nabla_{X}u),
\end{equation}
\begin{equation}\label{eq:2nd-inner-outer}
\delta^{2}E_{\ep}(u)(X, X) = \delta^{2} E_{\ep}(u)(\nabla_{X}u, \nabla_{X}u) + \delta E_{\ep}(u) (\nabla_{X}\nabla_{X}u).
\end{equation}
\end{prop}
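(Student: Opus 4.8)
The plan is to prove \eqref{eq:1st-inner-outer} and \eqref{eq:2nd-inner-outer} by a direct computation, differentiating the one-parameter family $t \mapsto E_\ep(\varphi_t^\ast u)$ and interpreting the resulting derivatives as outer variations evaluated at the appropriate directions. The starting observation is that the map $t \mapsto \varphi_t^\ast u = u \circ \varphi_t$ is a smooth curve in, say, $W^{1,2} \cap L^\infty(M;\CC)$ (since $u$ is smooth and $M$ is closed, everything in sight is uniformly bounded in $t$ on a neighborhood of $0$), so all derivatives in $t$ may be passed under the relevant integrals and commute with the Hilbert-space derivatives defining the outer variations. The key pointwise identities are the chain-rule formulas
\[
\frac{d}{dt}\Big|_{t=0}(u\circ\varphi_t) = \nabla_X u = du(X), \qquad \frac{d^2}{dt^2}\Big|_{t=0}(u\circ\varphi_t) = \nabla_X\nabla_X u,
\]
where the second equality uses that $\frac{d}{dt}\varphi_t = X\circ\varphi_t$ and $\varphi_0 = \id$, so that $\frac{d}{dt}(du(X\circ\varphi_t))\big|_{t=0} = \nabla_X(du(X)) = \nabla_X\nabla_X u$; here I am writing $\nabla_X\nabla_X u$ for the second covariant derivative of the $\CC$-valued function $u$, which is just an honest second directional derivative since $u$ is scalar-valued and the only connection involved is the trivial one on $M\times\CC$.

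Granting these, the first identity \eqref{eq:1st-inner-outer} is immediate: by the chain rule for the curve $t\mapsto \varphi_t^\ast u$ in the Banach space of admissible functions,
\[
\delta E_\ep(u)(X) = \frac{d}{dt}E_\ep(\varphi_t^\ast u)\Big|_{t=0} = \delta E_\ep(u)\!\left(\frac{d}{dt}\Big|_{t=0}\varphi_t^\ast u\right) = \delta E_\ep(u)(\nabla_X u).
\]
For the second identity, write $w(t) = \varphi_t^\ast u$, a smooth curve with $w(0) = u$, $w'(0) = \nabla_X u$, $w''(0) = \nabla_X\nabla_X u$. Differentiating $t\mapsto E_\ep(w(t))$ twice at $t=0$ and using that $E_\ep$ is a smooth function on the relevant Hilbert space (indeed a polynomial-type expression in $u$ and $\nabla u$, once we note $|u|\le$ const along the family), the general second-derivative-along-a-curve formula gives
\[
\frac{d^2}{dt^2}E_\ep(w(t))\Big|_{t=0} = \delta^2 E_\ep(u)(w'(0), w'(0)) + \delta E_\ep(u)(w''(0)),
\]
which is exactly \eqref{eq:2nd-inner-outer} after substituting $w'(0) = \nabla_X u$ and $w''(0) = \nabla_X\nabla_X u$. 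The term $\delta E_\ep(u)(\nabla_X\nabla_X u)$ does not drop out because, even when $u$ solves \eqref{eq:GL-eq}, the statement is asserted for general smooth $u$, not just for solutions; when $u$ is a solution this term vanishes, which is the reason the identity is useful.

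The main obstacle—really the only point requiring care—is justifying the interchange of $\frac{d}{dt}$ with the outer variation, i.e.\ making precise the ``chain rule in Banach space'' steps above. Concretely one should check that $t\mapsto \varphi_t^\ast u$ is $C^2$ as a curve into $W^{1,2}(M;\CC)$ (or into $W^{1,2}\cap L^\infty$, whichever ambient space one uses for $\delta^2 E_\ep$), with derivatives given by the pointwise formulas above; this follows from smoothness of $u$, smoothness of the flow $\varphi_t$ in $(t,x)$, and compactness of $M$, via dominated convergence / difference-quotient estimates on $M\times[-\delta,\delta]$. Alternatively, and perhaps more cleanly, one can avoid the abstract chain rule entirely by differentiating the explicit integral \eqref{eq:inner-pre-defi} directly: expand $|\nabla(u\circ\varphi_t)|^2$ and $(1-|u\circ\varphi_t|^2)^2$, differentiate twice under the integral sign (justified by uniform bounds on $[-\delta,\delta]\times M$), and match the resulting expressions term by term against \eqref{eq:GL-1st-var} and \eqref{eq:GL-2nd-var} evaluated at $v = \nabla_X u$, plus a $\delta E_\ep(u)$ term at $v = \nabla_X\nabla_X u$; the bookkeeping is routine but slightly tedious, whereas the abstract argument is shorter provided the regularity of $t\mapsto\varphi_t^\ast u$ is granted.
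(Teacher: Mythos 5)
Your proposal is correct, and in substance it is the same argument as the paper's: both rest on identifying the curve $t \mapsto \varphi_t^\ast u$ with first and second $t$-derivatives $\nabla_X u$ and $\nabla_X\nabla_X u$ at $t=0$, and then reading the inner variations as the first and second derivatives of $E_\ep$ along that curve. The only difference is packaging: you invoke the abstract chain rule for the smooth functional $E_\ep$ on $W^{1,2}\cap L^\infty$ (which requires the $C^2$-regularity of $t\mapsto\varphi_t^\ast u$ into that space, as you note), whereas the paper carries out exactly the ``alternative'' you sketch at the end — it differentiates the explicit integral \eqref{eq:inner-pre-defi} under the integral sign, uses $\tfrac{d}{dt}d(\varphi_t^\ast u)\big|_{t=0}=\cL_X du=d(\nabla_X u)$ and $\tfrac{d^2}{dt^2}d(\varphi_t^\ast u)\big|_{t=0}=d(\nabla_X\nabla_X u)$, and matches the resulting terms against \eqref{eq:GL-1st-var} and \eqref{eq:GL-2nd-var}. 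The concrete route avoids any appeal to Banach-space calculus and makes the regularity issue trivial (everything is smooth and $M$ is compact, so differentiation under the integral is immediate); your abstract route is shorter once the regularity of the curve is granted, and your remark that the correction term $\delta E_\ep(u)(\nabla_X\nabla_X u)$ survives for general $u$ and vanishes precisely at solutions is exactly the point of Corollary~\ref{coro:inner-outer-coro}.
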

\begin{proof}
The Proposition was proved in~\cite{Le} in the case where $M$ is a domain in Euclidean space, but for a more general class of functionals. When we specialize to $E_{\ep}$, the computation is rather short, so we include it below.

First note the following immediate consequences of the definitions of the first and second inner variations:
\begin{equation}\label{eq:inner-defi-1}
\delta E_{\ep}(u)(X) = \int_{M} \langle du, \frac{d}{dt}d(\varphi_{t}^{\ast}u) \rangle + \frac{|u|^{2} - 1}{\ep^{2}}(u \cdot \frac{d}{dt}\varphi_{t}^{\ast}u) d\mu_g,
\end{equation}
\begin{align}
\nonumber\delta^{2} E_{\ep}(u)(X, X) = \int_{M} & |\frac{d}{dt}d(\varphi_{t}^{\ast})u|^{2} + \langle du, \frac{d^{2}}{dt^{2}}d(\varphi_{t}^{\ast}u) \rangle + \frac{|u|^{2} - 1}{\ep^{2}}|\frac{d}{dt}\varphi_{t}^{\ast}u|^{2} \\
\label{eq:inner-defi-2} &+ \frac{2}{\ep^{2}}(u \cdot \frac{d}{dt}\varphi_{t}^{\ast}u)^{2} + \frac{|u|^{2} - 1}{\ep^{2}}(u \cdot \frac{d^2}{dt^2}\varphi_{t}^{\ast}u)d\mu_g.
\end{align}
Next, since $d(\varphi_{t}^{\ast}u) = \varphi_{t}^{\ast}du$, we have
\[
\frac{d}{dt}d(\varphi_{t}^{\ast}u) = \varphi_{t}^{\ast}(\cL_{X}du) = \varphi_{t}^{\ast}d(\nabla_X u).
\]
Differentiating a second time yields 
\[
\frac{d^{2}}{dt^{2}}d(\varphi_{t}^{\ast}u) \vert_{t = 0} = \cL_{X}\cL_{X}du = d(\nabla_X \nabla_X u).
\]
Performing similar computations for $\varphi_{t}^{\ast}u$, substituting into~\eqref{eq:inner-defi-1} and~\eqref{eq:inner-defi-2}, and comparing with~\eqref{eq:GL-1st-var} and~\eqref{eq:GL-2nd-var}, we are done. 
\end{proof}
The following corollary of Proposition~\ref{prop:inner-outer} is immediate.
\begin{coro}\label{coro:inner-outer-coro}
 If $u : M \to \CC$ is a solution to~\eqref{eq:GL-eq}, then 
\[
\delta^{2}E_{\ep}(u)(X, X) = \delta^{2}E_{\ep}(u)(\nabla_{X}u, \nabla_{X}u),
\]
and thus polarizing $\delta^{2}E_{\ep}(u)$ yields a symmetric bilinear form on the space of vector fields on $M$. Moreover, if $u$ is a stable solution to~\eqref{eq:GL-eq}, then for all vector fields $X$ on $M$ we have 
\[
\delta^{2}E_{\ep}(u)(X, X) \geq 0.
\]
\end{coro}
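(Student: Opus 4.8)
The plan is to read off Corollary~\ref{coro:inner-outer-coro} directly from Proposition~\ref{prop:inner-outer}, so almost all of the work has already been done in the previous proof; what remains is bookkeeping. First I would start from the identity~\eqref{eq:2nd-inner-outer}, namely $\delta^{2}E_{\ep}(u)(X, X) = \delta^{2} E_{\ep}(u)(\nabla_{X}u, \nabla_{X}u) + \delta E_{\ep}(u) (\nabla_{X}\nabla_{X}u)$. Since $u$ is assumed to be a solution of~\eqref{eq:GL-eq}, the first outer variation $\delta E_{\ep}(u)(\cdot)$ vanishes identically (this is what it means to be a critical point, cf.~\eqref{eq:GL-1st-var} and the discussion preceding it), and in particular $\delta E_{\ep}(u)(\nabla_{X}\nabla_{X}u) = 0$. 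Note here one should only observe that $\nabla_X \nabla_X u$ is an admissible test function, which is immediate because $u$ is smooth and $M$ is closed. This gives the first assertion, $\delta^{2}E_{\ep}(u)(X, X) = \delta^{2} E_{\ep}(u)(\nabla_{X}u, \nabla_{X}u)$.

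For the claim that polarization yields a symmetric bilinear form, I would argue as follows. The map $X \mapsto \nabla_X u$ is $\RR$-linear (indeed $C^\infty(M;\RR)$-linear) from vector fields on $M$ into $W^{1,2}(M;\CC)$, the latter inclusion again being clear from smoothness of $u$ and compactness of $M$. Composing this linear map in both slots with the symmetric bilinear form $\delta^{2}E_{\ep}(u)$ on $W^{1,2}(M;\CC)$ produces a symmetric bilinear form $Q(X, Y) := \delta^{2}E_{\ep}(u)(\nabla_{X}u, \nabla_{Y}u)$ on the space of vector fields. By the identity just established, the associated quadratic form $X \mapsto Q(X,X)$ coincides with $X \mapsto \delta^{2}E_{\ep}(u)(X,X)$, so the latter is genuinely a quadratic form and its polarization is exactly $Q$. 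Finally, for the stability statement, if $u$ is a stable solution then by definition~\eqref{eq:stable-defi} we have $\delta^{2}E_{\ep}(u)(v, v) \geq 0$ for every $v \in W^{1,2}(M;\CC)$; applying this with $v = \nabla_X u$ and using $\delta^{2}E_{\ep}(u)(X, X) = \delta^{2} E_{\ep}(u)(\nabla_{X}u, \nabla_{X}u)$ yields $\delta^{2}E_{\ep}(u)(X, X) \geq 0$.

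I do not anticipate a genuine obstacle: the entire substance is contained in Proposition~\ref{prop:inner-outer}, and the remaining points—that $\nabla_X u$ and $\nabla_X\nabla_X u$ lie in the relevant function spaces, and that composing a symmetric bilinear form with a linear map is again symmetric bilinear—are routine consequences of the smoothness of $u$ and compactness of $M$. The only place where one might wish to be slightly careful is in recording that the inner second variation $\delta^2 E_\ep(u)(X,X)$ is well-defined (hence that ``polarizing'' it makes sense), but this is already guaranteed by~\eqref{eq:inner-pre-defi} and the smoothness hypotheses, so the corollary follows with essentially no extra computation.
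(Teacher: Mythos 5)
Your proof is correct and follows exactly the route the paper intends: the corollary is stated as an immediate consequence of Proposition~\ref{prop:inner-outer}, obtained by killing the term $\delta E_{\ep}(u)(\nabla_X\nabla_X u)$ via criticality and then invoking the definition of stability~\eqref{eq:stable-defi} with $v = \nabla_X u$. The added remarks on linearity of $X \mapsto \nabla_X u$ and admissibility of the test functions are fine but not essential.
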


\subsection{The second inner variation formula of $E_{\ep}$}
In this section we continue to assume that $(M, g)$ is a closed Riemannian manifold. The main purpose is to compute the second inner variation of $E_{\ep}$. The computation has been carried out in~\cite{Le} in the case where $M$ is a subset of $\RR^{n}$ with the Euclidean metric. In general one has to be careful about curvature terms. For the sake of clarity, we single out some of the important facts to be used in computing the second variation in the lemma below.
\begin{lemm}\label{lemm:calculations}
Let $X$ be a smooth vector field on $M$ and let $\varphi_{t}$ denote the flow it generates. Let $g_{t} = \varphi_{-t}^{\ast}g$. (Note that we pull back via $\varphi_{-t}$ rather than $\varphi_t$.) Then we have
\begin{enumerate}
\item[(a)] $\frac{d}{dt}g_{t} = -\varphi_{-t}^{\ast}\cL_{X}g$.
\item[(b)] $\frac{d}{dt}g_t^{ij} = g_{t}^{ik}(\varphi_{-t}^{\ast}\cL_X g)_{kl}g_{t}^{lj}$.
\item[(c)] 
\begin{align*}
&-\frac{d}{dt}\varphi_{-t}^{\ast}(\cL_X g)_{ij}\vert_{t = 0}= (\cL_{X}\cL_{X}g)_{ij}\\
&= \langle \nabla_i \nabla_X X, \partial_j \rangle + \langle \nabla_j \nabla_X X, \partial_i \rangle + \langle R_{X, \partial_i}X, \partial_j \rangle + \langle R_{X, \partial_j}X, \partial_i \rangle + 2\langle \nabla_i X, \nabla_j X \rangle.
\end{align*}
\item[(d)] 
\begin{align*}
&-\frac{d}{dt}(\Div_{g_{t}}X)\big|_{t = 0} = \nabla_{X}(\Div_{g}X)\\
&= \Div_{g}(\nabla_{X}X) - \Ric(X, X) - g^{ij}g^{kl}\langle \nabla_i X, \partial_{k} \rangle\langle \nabla_l X, \partial_j \rangle.
\end{align*}
\end{enumerate}
\end{lemm}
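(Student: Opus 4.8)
The statement to prove is Lemma~\ref{lemm:calculations}, a bundle of four standard Lie-derivative/curvature identities. The overarching strategy is to work at a fixed point $p \in M$ in normal coordinates centered at $p$, so that $g_{ij}(p) = \delta_{ij}$ and all first derivatives of $g_{ij}$ vanish at $p$; this kills the difference between coordinate and covariant derivatives at $p$ and makes the Christoffel-symbol bookkeeping manageable. Throughout I would freely identify $\cL_X du = d(\nabla_X u)$ for functions (as already used in the proof of Proposition~\ref{prop:inner-outer}) and use the flow property $\frac{d}{dt}\varphi_{-t}^\ast \alpha = -\varphi_{-t}^\ast \cL_X \alpha$ for any tensor $\alpha$, which is the content of (a) directly (taking $\alpha = g$), and gives $\frac{d}{dt}g_t\vert_{t=0} = -\cL_X g$.

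For (b), I would differentiate the identity $g_t^{ik} (g_t)_{kj} = \delta^i_j$ in $t$ to get $\frac{d}{dt} g_t^{ij} = -g_t^{ik}\big(\frac{d}{dt}(g_t)_{kl}\big) g_t^{lj}$ and substitute (a); the sign flips so the $\varphi_{-t}^\ast \cL_X g$ appears with a plus, exactly as stated. For (c), the first equality $-\frac{d}{dt}\varphi_{-t}^\ast(\cL_X g)_{ij}\vert_{t=0} = (\cL_X \cL_X g)_{ij}$ is again just the flow property applied to the tensor $\cL_X g$. The second equality is the real computation: I would expand $\cL_X(\cL_X g)$ using the Leibniz rule for $\cL_X$ on the symmetric $2$-tensor $\cL_X g$, whose components are $(\cL_X g)_{ij} = \langle \nabla_i X, \partial_j\rangle + \langle \nabla_j X, \partial_i\rangle$. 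Differentiating this along $X$ and converting $\nabla_X \nabla_i X$ to $\nabla_i \nabla_X X$ plus a curvature correction $R_{X,\partial_i}X$ (using the definition of $R$ from Section~\ref{sec:notation} together with $[X,\partial_i] = -\nabla_{\partial_i}X$-type commutators, handled cleanly at $p$ in normal coordinates) produces precisely the four terms $\langle \nabla_i \nabla_X X, \partial_j\rangle + \langle\nabla_j \nabla_X X, \partial_i\rangle + \langle R_{X,\partial_i}X,\partial_j\rangle + \langle R_{X,\partial_j}X,\partial_i\rangle$, and the cross terms from the Leibniz rule assemble into $2\langle\nabla_i X, \nabla_j X\rangle$.

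For (d), I would start from $\Div_{g_t} X = g_t^{ij}\langle\nabla^{g_t}_i X, \partial_j\rangle_{g_t}$ and differentiate at $t=0$, picking up three contributions: from $\frac{d}{dt}g_t^{ij}$ (which by (b) gives $g^{ik}(\cL_X g)_{kl}g^{lj}\langle\nabla_i X,\partial_j\rangle$), from the variation of the connection $\nabla^{g_t}$, and from the variation of the metric inside the inner product. An efficient alternative, which I would actually use, is to recognize $-\frac{d}{dt}(\Div_{g_t}X)\vert_{t=0} = \nabla_X(\Div_g X)$ directly from the flow property (since $\Div_{g_t}X = \varphi_{-t}^\ast(\Div_g X)$ is a scalar transported by the flow — one must check $\Div_{g_t} X = \varphi_{-t}^\ast \Div_g(\varphi_{t*}X) = \varphi_{-t}^\ast \Div_g X$ using that $X$ is $\varphi_t$-related to itself), and then compute $\nabla_X \Div_g X$ by commuting derivatives: $\nabla_X \Div_g X = \nabla_X(g^{ij}\langle\nabla_i X, \partial_j\rangle)$; expanding and using the Ricci identity $\nabla_X\nabla_i X - \nabla_i\nabla_X X = R_{X,\partial_i}X + \nabla_{[X,\partial_i]}X$ yields $\Div_g(\nabla_X X) - \Ric(X,X)$ minus the quadratic term $g^{ij}g^{kl}\langle\nabla_i X,\partial_k\rangle\langle\nabla_l X,\partial_j\rangle$, matching the claim.

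\textbf{Main obstacle.} The routine-but-error-prone part is part (c): correctly tracking signs and commutators when expanding $\cL_X \cL_X g$ and converting iterated Lie derivatives of $X$ into covariant derivatives plus curvature. Working at a point in normal coordinates, where $\nabla_i$ and $\partial_i$ agree at $p$ and Christoffel symbols vanish at $p$ (though their derivatives, encoding curvature, do not), is what keeps this tractable; the one genuine subtlety is remembering that second derivatives of the metric at $p$ survive and are exactly what produce the curvature terms, so one cannot naively set ``everything nonlinear in the Christoffels'' to zero. Parts (a), (b), (d) are essentially formal once the flow-transport principle $\frac{d}{dt}\varphi_{-t}^\ast = -\varphi_{-t}^\ast\cL_X$ is in hand.
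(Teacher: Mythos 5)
Your proposal is correct, and for parts (a)--(c) it follows essentially the paper's own route: (a) is the standard flow-transport identity $\frac{d}{dt}\varphi_{-t}^{\ast}\alpha = -\varphi_{-t}^{\ast}\cL_X\alpha$, (b) comes from differentiating $g_t^{ik}(g_t)_{kj}=\delta^i_j$, and your expansion of $(\cL_X\cL_X g)_{ij}$ via $(\cL_X g)(Y,Z)=\langle \nabla_Y X, Z\rangle + \langle \nabla_Z X, Y\rangle$ followed by the curvature/commutator exchange $\nabla_X\nabla_i X - \nabla_{[X,\partial_i]}X = \nabla_i\nabla_X X + R_{X,\partial_i}X$ is exactly the paper's computation (the paper does it invariantly rather than in normal coordinates, which is a cosmetic difference). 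The one place you genuinely deviate is the first equality in (d): the paper pairs $\Div_{g_t}X$ against an arbitrary test function and integrates by parts to identify $-\frac{d}{dt}(\Div_{g_t}X)\big|_{t=0}$ with $\nabla_X(\Div_g X)$, whereas you invoke naturality of the divergence under diffeomorphisms together with the flow-invariance $\varphi_{-t}^{\ast}X=(\varphi_t)_{\ast}X=X$, giving $\Div_{g_t}X=\varphi_{-t}^{\ast}(\Div_g X)$ and hence the identity pointwise by differentiating the composition. This is valid and arguably cleaner, since it makes (d)'s first equality an instance of the same transport principle used in (a) and (c); the paper's duality argument only spares one the explicit (but easy) check that $X$ is invariant under its own flow. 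The second equality of (d), via the Ricci identity and the trace $g^{ij}\langle R_{X,\partial_i}X,\partial_j\rangle = -\Ric(X,X)$ plus the quadratic term from $\nabla_{\nabla_i X}X$, matches the paper.
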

\begin{proof}
Part (a) is standard. See for instance~\cite{Lee}. Part (b) follows immediately from (a) since $g_{t}^{ij}$ is by definition the components of the inverse of $g_{t}$.

The first equality in part (c) is just the definition of the Lie derivative. For the second equality, we compute
\[
(\cL_{X}\cL_{X}g)_{ij} = X (\cL_{X}g)_{ij} - (\cL_{X}g)([X, \partial_i], \partial_j) - (\cL_{X}g)(\partial_i, [X, \partial_j]).
\]
To continue, recall that 
\[
(\cL_{X}g)(Y, Z) = \langle \nabla_{Y}X, Z \rangle + \langle \nabla_{Z}X, Y\rangle,
\]
and hence the right-hand side of the previous equality equals
\begin{align}
\nonumber & X \big(\langle \nabla_i X, \partial_j \rangle + \langle \nabla_j X, \partial_i \rangle \big) - \langle \nabla_{[X, \partial_i]}X, \partial_j \rangle - \langle \nabla_j X, [X, \partial_i] \rangle\\
\nonumber & - \langle \nabla_i X, [X, \partial_j] \rangle - \langle \nabla_{[X, \partial_j]}X, \partial_i \rangle\\
\nonumber=& \langle \nabla_X \nabla_i X, \partial_j \rangle + \langle \nabla_X \nabla_j X, \partial_i \rangle + 2\langle \nabla_i X, \nabla_j X \rangle\\
\nonumber & - \langle \nabla_{[X, \partial_i]}X, \partial_j \rangle - \langle \nabla_{[X, \partial_j]}X, \partial_i \rangle\\
=&\langle \nabla_i \nabla_X X, \partial_j \rangle + \langle \nabla_j \nabla_X X, \partial_i \rangle + \langle R_{X, \partial_i}X, \partial_j \rangle + \langle R_{X, \partial_j}X, \partial_i \rangle + 2\langle \nabla_i X, \nabla_j X \rangle.
\end{align}

For part (d), the first equality can be verified using either local coordinates or integration by parts. Specifically, let $f$ be an arbitrary smooth function on $M$. Then we of course have
\[
\int_{M} f \Div_{g_t}X d\mu_{g_t} = -\int_{M} X(f) d\mu_{g_t}.
\]
Next we differentiate both sides with respect to $t$ under the integral and set $t = 0$ to obtain
\[
\int_{M} \big[f \frac{d}{dt}(\Div_{g_t}X)\big|_{t =0} - f ( \Div_{g}X )^{2}\big] d\mu_{g} = \int_{M} X(f)\Div_{g}X d\mu_{g}.
\]
Integrating by parts on the right-hand side yields
\[
\int_{M} X(f)\Div_{g}X d\mu_{g} = -\int_{M} f \Div_{g}( X \Div_{g}X ) d\mu_{g} = -\int_{M} f ( \Div_{g}X)^2 + f \nabla_{X}\Div_{g}X d\mu_g.
\]
Substituting into the previous equality and observing a cancellation, we get
\[
\int_{M} f \frac{d}{dt}(\Div_{g_t}X)\big|_{t =0}  d\mu_{g} = -\int_{M}f \nabla_{X}\Div_{g}X d\mu_{g}.
\]
Since $f$ is arbitrary, the first equality in (d) is proved.

Next, letting $\theta_X$ denote the $1$-form dual to $X$, we have  $(\nabla \theta_{X})_{ij} = \langle \nabla_{i}X, \partial_{j} \rangle, \text{ and }\Div_{g}X = \langle g, \nabla \theta_{X} \rangle$. Thus
\begin{align*}
\nabla_{X}\Div X &= \nabla_{X} \langle g, \nabla \theta_X \rangle = \langle g, \nabla_{X}\nabla \theta_{X} \rangle\\
&= g^{ij}\big( X(\nabla \theta_{X})_{ij} - (\nabla \theta_{X})_{\nabla_{X}\partial_i, \partial_j} - (\nabla \theta_{X})_{\partial_i, \nabla_{X}\partial_j}\big)\\
&= g^{ij}\big( \langle \nabla_{X}\nabla_i X, \partial_j \rangle + \langle \nabla_i X, \nabla_{X}\partial_j \rangle - \langle \nabla_{\nabla_{X}\partial_i}X, \partial_j \rangle - \langle \nabla_i X, \nabla_X \partial_j \rangle \big).
\end{align*}
Cancelling the second and fourth term in the last line and introducing the curvature to switch the order of derivative, we get
\begin{align*}
\nabla_{X}\Div X&= g^{ij}\big( \langle \nabla_i \nabla_{X}X, \partial_{j} \rangle - \langle \nabla_{\nabla_i X}X, \partial_j \rangle  + \langle R_{X, \partial_i}X, \partial_j \rangle\big)\\
&= \Div_{g}(\nabla_{X}X) - \Ric(X, X) - g^{ij}g^{kl}\langle \nabla_i X, \partial_{k} \rangle\langle \nabla_l X, \partial_j \rangle.
\end{align*}
\end{proof}
\begin{prop}\label{prop:2nd-var}
Suppose $X$ is any smooth vector field on $M$ and denote by $\{\varphi_{t}\}$ the flow generated by $X$. For any smooth function $u : M \to \CC$, we have
\begin{align}
&\delta E_{\ep}(u)(X) = -\int_{M} e_{\ep}(u) \Div X  - \langle \nabla_{\nabla u} X, \nabla u \rangle d\mu. \label{eq:1st-innervar}\\
\label{eq:2nd-innervar}  &\delta^{2} E_{\ep}(u)(X, X)\\
\nonumber =& \int_{M} e_{\ep}(u)\big( (\Div X)^{2} - \Ric(X, X) - \langle \nabla_{e_{i}}X, e_{j} \rangle \langle \nabla_{e_{j}}X, e_{i} \rangle + \Div \nabla_X X \big) d\mu\\ 
\nonumber & -\int_{M} 2\langle \nabla_{\nabla u}X, \nabla u \rangle  \Div X  - R(\nabla u, X, X, \nabla u) + |\nabla_{\nabla u}X|^{2} + \langle \nabla_{\nabla u}\nabla_{X} X, \nabla u \rangle d\mu\\
\nonumber & + \int_{M}| \cL_{X}g \llcorner \nabla u |^{2} d\mu.
\end{align}
The $e_{1}, \cdots, e_{n}$ in~\eqref{eq:2nd-innervar} is any orthonormal basis at the point where the integrand is being computed. Note that this choice does not affect the result.
\end{prop}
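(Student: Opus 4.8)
The plan is to compute $\delta^{2}E_{\ep}(u)(X,X)$ directly from the definition, namely by differentiating $E_{\ep}(\varphi_{t}^{\ast}u)$ twice in $t$ and evaluating at $t=0$. The cleanest bookkeeping is to transfer the $t$-dependence off the map and onto the metric: since $\varphi_{t}$ is a diffeomorphism, $E_{\ep}(\varphi_{t}^{\ast}u) = \int_{M}\big(\tfrac12|d(\varphi_t^\ast u)|^2_{g} + \tfrac{(1-|\varphi_t^\ast u|^2)^2}{4\ep^2}\big)d\mu_g$, and pulling back by $\varphi_t$ this equals $\int_{M}\big(\tfrac12|du|^2_{g_t} + \tfrac{(1-|u|^2)^2}{4\ep^2}\big)d\mu_{g_t}$ with $g_t = \varphi_{-t}^{\ast}g$, exactly the setup of Lemma~\ref{lemm:calculations}. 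Here the potential term is a fixed function on $M$ and only $d\mu_{g_t}$ varies, while the Dirichlet term sees the variation both through $g_t^{ij}$ contracting the indices of $du$ and through $d\mu_{g_t}$. So the whole computation reduces to differentiating $g_t^{ij}$, $d\mu_{g_t}$, and $\Div_{g_t}X$, all supplied by parts (a)--(d) of the Lemma.

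The key steps, in order: (i) First variation. Write $E_{\ep}(\varphi_t^\ast u) = \int_M \big(\tfrac12 g_t^{ij}\partial_i \bar u\,\partial_j u + \text{pot}\big)d\mu_{g_t}$; differentiate at $t=0$ using $\tfrac{d}{dt}d\mu_{g_t}|_0 = -(\Div_g X)d\mu_g$ and $\tfrac{d}{dt}g_t^{ij}|_0 = (\cL_X g)^{ij}$ (the raised form of part (b) at $t=0$). The $d\mu$-derivative contributes $-\int_M e_\ep(u)\Div X\, d\mu$, and the metric-derivative of the Dirichlet term contributes $+\int_M \langle \cL_X g, du\otimes du\rangle$-type terms; identifying $\langle\cL_X g, \cdot\rangle$ applied to $\nabla u$ with $2\langle\nabla_{\nabla u}X,\nabla u\rangle$ (by symmetry of $\cL_X g$) gives exactly \eqref{eq:1st-innervar}. (ii) Second variation. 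Differentiate once more. One gets three kinds of terms: a ``pure $d\mu$'' term where both derivatives hit $d\mu_{g_t}$, which by part (d) produces $e_\ep(u)\big((\Div X)^2 - \Ric(X,X) - \langle\nabla_{e_i}X,e_j\rangle\langle\nabla_{e_j}X,e_i\rangle + \Div\nabla_X X\big)$ after using $\tfrac{d}{dt}(\Div_{g_t}X)|_0 = -\nabla_X\Div_g X$ together with $\tfrac{d}{dt}d\mu_{g_t}|_0 = -(\Div_g X)d\mu_g$; the cross terms where one derivative hits $d\mu$ and one hits $g_t^{ij}$, giving the $-2\langle\nabla_{\nabla u}X,\nabla u\rangle\Div X$ term; and the ``pure $g^{ij}$'' term where both derivatives hit the inverse metric in the Dirichlet density, which by part (c) unpacks into the Hessian term $\langle\nabla_{\nabla u}\nabla_X X,\nabla u\rangle$, the curvature term $-R(\nabla u, X, X,\nabla u)$ (note the sign matches the convention fixed in Section~1.2), the $|\nabla_{\nabla u}X|^2$ term, and the remaining $|\cL_X g\llcorner\nabla u|^2$ term. (iii) Collect and relabel to match \eqref{eq:2nd-innervar}, checking that every $g^{ij}$-contraction can be written with an orthonormal frame $e_1,\dots,e_n$ so the stated basis-independence is manifest.

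The main obstacle is the combinatorics of the second-order term together with getting every sign and curvature placement right. In particular, the ``pure $g^{ij}$'' piece requires computing $\tfrac{d^2}{dt^2}g_t^{ij}|_0$, which is not simply the square of the first derivative: one must differentiate $\tfrac{d}{dt}g_t^{ij} = g_t^{ik}(\varphi_{-t}^\ast\cL_X g)_{kl}g_t^{lj}$ again, producing both a term with $\tfrac{d}{dt}(\varphi_{-t}^\ast\cL_X g)_{kl}|_0 = -(\cL_X\cL_X g)_{kl}$ — which is where part (c) and hence the curvature $R_{X,\partial_i}X$ enters — and two terms from differentiating the outer $g_t^{ik}, g_t^{lj}$ factors, which recombine with the cross terms. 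Reconciling the $\nabla_X\nabla_X u$ versus $\nabla_X X$ contributions (the Lie-derivative identity $\cL_X\cL_X du = d(\nabla_X\nabla_X u)$ used in Proposition~\ref{prop:inner-outer} is the outer-variation counterpart, but here everything must be expressed purely in terms of $X$ and its covariant derivatives) is the delicate part; once the dust settles the five groups of terms in \eqref{eq:2nd-innervar} should emerge. A useful sanity check is to specialize to $M\subset\RR^n$ flat, where all curvature terms drop and the formula must reduce to the one in~\cite{Le}.
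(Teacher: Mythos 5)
Your proposal follows essentially the same route as the paper's proof: pull the variation back onto the metric via $g_t=\varphi_{-t}^{\ast}g$, differentiate $\int_M e_\ep(u,g_t)\,d\mu_{g_t}$ twice using Lemma~\ref{lemm:calculations} (parts (b)--(d) for $g_t^{ij}$, $\cL_X\cL_X g$, and $\tfrac{d}{dt}\Div_{g_t}X$), and sort the result into the pure-volume, cross, and pure-inverse-metric contributions, including the correct observation that $\tfrac{d^2}{dt^2}g_t^{ij}|_{t=0}$ contains both the $-(\cL_X\cL_X g)$ piece and the quadratic $(\cL_X g)(\cL_X g)$ pieces that yield $|\cL_X g\llcorner\nabla u|^2$. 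This matches the paper's argument step for step, so the approach and the resulting bookkeeping are correct.
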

\begin{proof}
Writing $E_{\ep}(\varphi_{t}^{\ast}u) = \int_{M}e_{\ep}(\varphi_{t}^{\ast}u, g) d\mu_{g}$ to emphasize the dependence on the metric, and denoting $g_{t} = \varphi_{-t}^{\ast}g$, we note that 
\[
E_{\ep}(\varphi_{t}^{\ast}u) = \int_{M} e_{\ep}(u, g_t) d\mu_{g_t} = \int_{M} \frac{1}{2}g_{t}^{ij}\partial_i u \cdot \partial_j u + \frac{(1 - |u|^{2})^{2}}{4\ep^{2}} d\mu_{g_{t}}.
\]
Differentiating under the integral sign, we get
\begin{equation}\label{eq:proof-2nd-var-1}
\frac{d}{dt}E_{\ep}(\varphi_{t}^{\ast}u) = \int_{M} \big[\frac{1}{2} g_{t}^{ik}\big(\varphi_{-t}^{\ast}\cL_{X}g \big)_{kl}g_{t}^{jl}\partial_i u \cdot \partial_j u - e_{\ep}(u, g_t) \Div_{g_{t}}X \big] d\mu_{g_t}.
\end{equation}
Evaluating at $t = 0$ gives~\eqref{eq:1st-innervar} at once. Next, we differentiate~\eqref{eq:proof-2nd-var-1} again to get
\begin{align}
\nonumber \frac{d^{2}}{dt^{2}}E_{\ep}(\varphi_{t}^{\ast}u)\vert_{t = 0} =& \int_{M}\frac{1}{2}\partial_i u \cdot \partial_j u \big[(-\cL_{X}\cL_{X}g)^{ij} + (\cL_{X}g)^{ik}(\cL_{X}g)_{kl}g^{jl} + g^{ik}(\cL_{X}g)_{kl}(\cL_{X}g)^{jl}\big]\\
\nonumber &- \frac{1}{2}g^{ik}(\cL_{X}g)_{kl}g^{lj}( \partial_i  u  \cdot \partial_j u) \Div_{g}X - \frac{d}{dt}e_{\ep}(u, g_t)|_{t = 0}\Div_{g}X\\
\label{eq:proof-2nd-var-2}& - e_{\ep}(u, g) \frac{d}{dt}(\Div_{g_t} X)\big|_{t=0} + e_{\ep}(u, g)(\Div_g X)^{2} d\mu_g.
\end{align}
Note that the two terms in the second line combine to $-g^{ik}(\cL_{X}g)_{kl}g^{lj}( \partial_i  u  \cdot \partial_j u) \Div_{g}X$. To continue, we use Lemma~\ref{lemm:calculations}(d) to replace the term $\frac{d}{dt}(\Div_{g_{t}}X)\big|_{t=0}$ and see that~\eqref{eq:proof-2nd-var-2} becomes
\begin{align}
\nonumber&\frac{d^{2}}{dt^{2}}E_{\ep}(\varphi_{t}^{\ast}u)\vert_{t = 0}\\
\nonumber =& \int_{M} e_{\ep}(u, g)\big[ (\Div_g X)^{2} - \Ric(X, X) - g^{ij}g^{kl}\langle \nabla_i X, \partial_{k} \rangle\langle \nabla_l X, \partial_j \rangle + \Div_{g}(\nabla_X X)  \big]d\mu_g\\
\nonumber & -\int_{M} g^{ik}(\cL_{X}g)_{kl}g^{lj}( \partial_i  u \cdot \partial_j u) \Div_{g}X d\mu_g - \int_{M}\frac{1}{2} (\cL_{X}\cL_{X}g)^{ij} (\partial_i u \cdot \partial_j u) d\mu_g \\
& + \int_{M} (\cL_{X}g)^{ik}(\cL_{X}g)_{kl}g^{jl} (\partial_i u\cdot \partial_j u)d\mu_g \label{eq:calculations-1}
\end{align}
To continue, note that 
\begin{equation}
\int_{M} g^{ik}(\cL_{X}g)_{kl}g^{lj} (\partial_i  u\cdot\partial_j u) \Div_{g}X d\mu_g = \int_{M} (\cL_{X}g)(\nabla u, \nabla u) \Div_g X d\mu_g = 2\int_{M} \langle \nabla_{\nabla u}X, \nabla u \rangle \Div X d\mu_g.
\end{equation}
Next, by Lemma~\ref{lemm:calculations}(c), we have
\begin{equation}
 \int_{M}\frac{1}{2} (\cL_{X}\cL_{X}g)^{ij} \partial_i u \cdot \partial_j u\  d\mu_g = \int_{M} \langle \nabla_{\nabla u}(\nabla_X X), \nabla u \rangle - R(\nabla u, X, X, \nabla u) + |\nabla_{\nabla u}X|^{2}.
\end{equation}
Finally, for the last line of~\eqref{eq:calculations-1}, we have
\begin{equation}
 \int_{M} (\cL_{X}g)^{ik}(\cL_{X}g)_{kl}g^{jl} \partial_i u \cdot\partial_j u\ d\mu_g = \int_{M} |\cL_{X}g\ \llcorner \nabla u|^{2}d\mu_g.
\end{equation}
Putting the three equations above back into~\eqref{eq:calculations-1}, and rewriting
\[
g^{ij}g^{kl}\langle \nabla_i X, \partial_{k} \rangle\langle \nabla_l X, \partial_j \rangle = \langle \nabla_{e_{i}}X, e_{j} \rangle \langle \nabla_{e_{j}}X, e_{i} \rangle 
\]
using an orthonormal frame, we obtain the desired result.
\end{proof}

Recalling the relationship between inner and outer variations noted in Section 2.1, we derive the following result from the previous proposition.
\begin{coro}\label{cor:outerinner}
Suppose $u : M \to \CC$ is a solution to~\eqref{eq:GL-eq} and let $X$, $\varphi_{t}$ be as in the previous Proposition. Then we have
\begin{align}
\nonumber \delta^{2}E_{\ep}(u)(X, X)=& \int_{M} e_{\ep}(u)\big( (\Div X)^{2} - \Ric(X, X) - \langle \nabla_{e_{i}}X, e_{j} \rangle \langle \nabla_{e_{j}}X, e_{i} \rangle \big) d\mu\\ 
\nonumber & -\int_{M} 2\langle \nabla_{\nabla u}X, \nabla u \rangle  \Div X  - R(\nabla u, X, X, \nabla u) +  |\nabla_{\nabla u}X|^{2} d\mu\\
\label{eq:2nd-innervar-solution}  & + \int_{M}| \cL_{X}g \ \llcorner \nabla u |^{2} d\mu.
\end{align}
\end{coro}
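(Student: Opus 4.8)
The plan is to deduce Corollary~\ref{cor:outerinner} from Proposition~\ref{prop:2nd-var} by invoking Corollary~\ref{coro:inner-outer-coro}, which is exactly the statement that for a solution $u$ to~\eqref{eq:GL-eq} the second inner variation agrees with the second outer variation in the direction $\nabla_X u$, so that the ``extra'' terms in~\eqref{eq:2nd-innervar} involving $\nabla_X\nabla_X u$ must cancel out. Concretely, the two terms in~\eqref{eq:2nd-innervar} that still mention $\nabla_X X$ are $\int_M e_{\ep}(u)\Div(\nabla_X X)\,d\mu$ and $-\int_M \langle \nabla_{\nabla u}\nabla_X X, \nabla u\rangle\,d\mu$, and the claim is precisely that these two integrals cancel whenever $u$ solves~\eqref{eq:GL-eq}.

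The cleanest route is to recognize that $\int_M e_{\ep}(u)\Div Y\,d\mu - \int_M\langle\nabla_{\nabla u}Y,\nabla u\rangle\,d\mu = -\delta E_{\ep}(u)(Y)$ for \emph{any} smooth vector field $Y$, which is just formula~\eqref{eq:1st-innervar} of Proposition~\ref{prop:2nd-var} read backwards. Applying this with $Y = \nabla_X X$, the sum of the two leftover terms in~\eqref{eq:2nd-innervar} equals $-\delta E_{\ep}(u)(\nabla_X X)$, and by~\eqref{eq:1st-inner-outer} this is $-\delta E_{\ep}(u)(\nabla_{\nabla_X X}u)$. Wait --- more directly, by Proposition~\ref{prop:inner-outer} the combination $\int_M e_{\ep}(u)\Div(\nabla_X X) - \langle\nabla_{\nabla u}\nabla_X X,\nabla u\rangle\,d\mu$ is $-\delta E_{\ep}(u)(\nabla_X X) = -\delta E_{\ep}(u)(\nabla_X\nabla_X u)$; but also, the relation~\eqref{eq:2nd-inner-outer} says $\delta^2 E_{\ep}(u)(X,X) = \delta^2 E_{\ep}(u)(\nabla_X u,\nabla_X u) + \delta E_{\ep}(u)(\nabla_X\nabla_X u)$, and since $u$ solves~\eqref{eq:GL-eq} the last term vanishes by the first variation characterization of weak solutions. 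Hence those two terms in the second inner variation formula simply drop, leaving exactly~\eqref{eq:2nd-innervar-solution}.

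So the proof is short: first observe that~\eqref{eq:1st-innervar} applied to the vector field $\nabla_X X$ (in place of $X$) identifies $\int_M e_{\ep}(u)\Div(\nabla_X X) - \langle\nabla_{\nabla u}(\nabla_X X),\nabla u\rangle\,d\mu$ with $-\delta E_{\ep}(u)(\nabla_X X)$; next, note by~\eqref{eq:1st-inner-outer} that $\delta E_{\ep}(u)(\nabla_X X) = \delta E_{\ep}(u)(\nabla_{\nabla_X X}u)$, which is $0$ because $u$ is a solution to~\eqref{eq:GL-eq} and hence $\delta E_{\ep}(u)(v) = 0$ for all smooth $v$; finally, delete the two vanishing terms from~\eqref{eq:2nd-innervar} to arrive at~\eqref{eq:2nd-innervar-solution}. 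There is essentially no obstacle here --- the only point requiring a touch of care is making sure the sign and the grouping of the $\nabla_X X$ terms in~\eqref{eq:2nd-innervar} match the pattern of~\eqref{eq:1st-innervar} exactly, i.e. that the coefficient of $e_{\ep}(u)\Div(\nabla_X X)$ is $+1$ and that of $\langle\nabla_{\nabla u}\nabla_X X,\nabla u\rangle$ is $-1$, which is indeed the case. Thus the corollary follows by pure bookkeeping from Proposition~\ref{prop:2nd-var} together with Corollary~\ref{coro:inner-outer-coro}.
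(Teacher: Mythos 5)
Your proposal is correct and coincides with the paper's own proof: apply~\eqref{eq:1st-innervar} with $\nabla_X X$ in place of $X$, use~\eqref{eq:1st-inner-outer} and the fact that $u$ is a solution to conclude that $\int_M e_{\ep}(u)\Div(\nabla_X X) - \langle\nabla_{\nabla u}\nabla_X X,\nabla u\rangle\,d\mu = 0$, and then drop these terms from~\eqref{eq:2nd-innervar}. The exploratory detour through~\eqref{eq:2nd-inner-outer} is harmless, and the sign bookkeeping in your final paragraph is exactly right.
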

\begin{proof}
By~\eqref{eq:1st-inner-outer} and ~\eqref{eq:1st-innervar} with $\nabla_{X} X$ in place of $X$, and recalling that $u$ is a smooth solution, we get
\[
\int_{M} e_{\ep}(u) \Div_g (\nabla_{X}X) - \langle \nabla_{\nabla u}\nabla_{X}X, \nabla u \rangle d\mu_{g} = 0.
\]
Combining this with~\eqref{eq:2nd-innervar} gives the result.
\end{proof}

Combining \eqref{eq:2nd-innervar-solution} with the analysis in~\cite{Che, Ste} on the asymptotic behavior of solutions as $\ep \to 0$ allows us to compare the second variation of $E_{\ep}$ with that of the volume, at least in the case where $b_{1}(M) = 0$. Interestingly, the result is very similar to that in~\cite[Theorem 1.5]{Le}, where the domain is a subset of $\RR^{n}$.
\begin{prop}\label{prop:2nd-innvar-lim}
Suppose $b_{1}(M) = 0$ and that $u_{\ep}$ is a family of solutions to~\eqref{eq:GL-eq} satisfying~\eqref{eq:GL-energybound}. Let $V$ denote the stationary rectifiable $(n-2)$-varifold obtained by applying Theorem~\ref{thm:GL-conv} to the sequence $u_{\ep}$. Then up to taking a subsequence as in Theorem~\ref{thm:GL-conv}, for any smooth vector field $X$ on $M$ we have
\begin{align}\nonumber
&\lim_{k \to \infty} \frac{1}{|\log \ep_k|}\delta^{2}E_{\ep_k}(u_{\ep_k})(X, X)\\ =\ & \delta^{2}V(X, X) + \int_{\Sigma} \big(\langle \nabla_{\nu_1}X, \nu_2 \rangle + \langle \nabla_{\nu_2}X, \nu_1 \rangle\big)^{2} + \big(\langle \nabla_{\nu_1}X, \nu_1 \rangle - \langle \nabla_{\nu_2}X, \nu_2 \rangle\big)^{2}  d\|V\|\label{eq:2nd-innvar-lim},
\end{align}
where $\nu_{1}, \nu_{2}$ is an orthonormal basis for $T_{x}^{\perp}\Sigma$, the orthogonal complement of $T_{x}\Sigma$ in $T_{x}M$. 
\end{prop}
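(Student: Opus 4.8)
The plan is to take the second inner variation formula \eqref{eq:2nd-innervar-solution} applied to the sequence $u_{\ep_k}$, normalize by $|\log \ep_k|$, and pass to the limit term by term, using the measure convergence \eqref{eq:GL-measure-conv} to control the factors involving $e_{\ep}(u_\ep)$ and a refined analysis (borrowed from \cite{Che, Ste}) to control the factors involving $\nabla u_\ep$. First I would isolate the ``scalar'' terms: since $\frac{1}{|\log\ep_k|}e_{\ep_k}(u_{\ep_k})\,d\mu_g \to \frac{|\psi|^2}{2}d\mu_g + \|V\|$, the first integral in \eqref{eq:2nd-innervar-solution} converges to $\int_M \big((\Div X)^2 - \Ric(X,X) - \langle\nabla_{e_i}X, e_j\rangle\langle\nabla_{e_j}X,e_i\rangle\big)\,(\tfrac{|\psi|^2}{2}d\mu_g + d\|V\|)$, with the $X$-dependent factor being continuous and bounded. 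The hypothesis $b_1(M) = 0$ forces the harmonic $1$-form $\psi$ to vanish, so this contribution reduces to an integral against $d\|V\|$ alone; this is a key simplification and the reason for the Betti-number assumption.

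The harder terms are those containing $\nabla u_{\ep_k}$ quadratically: $\langle \nabla_{\nabla u}X,\nabla u\rangle \Div X$, $R(\nabla u, X, X, \nabla u)$, $|\nabla_{\nabla u}X|^2$, and $|\cL_X g \llcorner \nabla u|^2$. Each of these is a contraction of $X$-dependent smooth tensors with the rank-two tensor $\frac{1}{|\log\ep_k|}\,du_{\ep_k}\otimes du_{\ep_k}$ (where the product is taken via $u\cdot v$ on $\CC \simeq \RR^2$). The strategy is to invoke the asymptotic description from \cite{Che, Ste}: along a subsequence, $\frac{1}{|\log\ep_k|}\, du_{\ep_k}\cdot du_{\ep_k}$ (viewed as a measure-valued symmetric $2$-tensor) converges to $\psi\otimes\psi\, d\mu_g$ plus the ``defect'' part, which, on the rectifiable set $\Sigma$ underlying $V$, is carried by the normal planes $T_x^\perp\Sigma$ and is, up to the scalar density, the projection $\nu_1\otimes\nu_1 + \nu_2\otimes\nu_2$ onto $T_x^\perp\Sigma$. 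In other words, $\frac{1}{|\log\ep_k|}\, du_{\ep_k}\cdot du_{\ep_k} \rightharpoonup \psi\otimes\psi\,d\mu_g + (\nu_1^\flat\otimes\nu_1^\flat + \nu_2^\flat\otimes\nu_2^\flat)\,d\|V\|$ as measures. Granting this (which is essentially the content of the energy-concentration analysis that produces the varifold $V$ in the first place, refined to track the direction of the gradient), each of the four quadratic terms passes to an explicit limit: using $\psi \equiv 0$, one is left with contractions of the $X$-tensors against $\nu_1\otimes\nu_1 + \nu_2\otimes\nu_2$ integrated against $d\|V\|$.

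The final step is bookkeeping: combine the limits of all terms and match them against the claimed right-hand side. Here one uses the known second variation formula for an $(n-2)$-varifold, $\delta^2 V(X,X) = \int_{\Sigma}\big(\text{div-type terms} + \text{curvature terms}\big)\,d\|V\|$ — precisely the Lawson--Simons/Simons expression — and checks that the leftover terms $\int_\Sigma |\nabla_{\nabla u}X|^2 - 2\langle\nabla_{\nabla u}X,\nabla u\rangle\Div X + |\cL_X g\llcorner\nabla u|^2 \pm \cdots$, after substituting $\nabla u \rightsquigarrow \nu_1, \nu_2$ summed over the normal frame, collapse algebraically to $(\langle\nabla_{\nu_1}X,\nu_2\rangle + \langle\nabla_{\nu_2}X,\nu_1\rangle)^2 + (\langle\nabla_{\nu_1}X,\nu_1\rangle - \langle\nabla_{\nu_2}X,\nu_2\rangle)^2$. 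This last identity is a pointwise linear-algebra computation: writing $A_{ij} = \langle\nabla_{\nu_i}X,\nu_j\rangle$ for $i,j \in \{1,2\}$ and expanding $|\cL_X g \llcorner \nabla u|^2 = \sum_i |(\cL_X g)(\nu_i, \cdot)|^2$ against the full ambient frame, the tangential-tangential and tangential-normal pieces are exactly what is needed to reconstruct $\delta^2 V$ plus the stated normal-normal remainder.

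I expect the main obstacle to be justifying the refined convergence $\frac{1}{|\log\ep_k|}\, du_{\ep_k}\cdot du_{\ep_k} \rightharpoonup \psi\otimes\psi\,d\mu_g + (\nu_1^\flat\otimes\nu_1^\flat + \nu_2^\flat\otimes\nu_2^\flat)\,d\|V\|$ at the level of the full tensor (not just its trace $|\nabla u_{\ep_k}|^2$): one must show there is no cancellation or loss, that the gradient is asymptotically normal to $\Sigma$ $\|V\|$-a.e., and that the Jacobian-type quantity $du_{\ep_k}\wedge du_{\ep_k}$ (which controls the $2$-dimensional defect) equidistributes its energy between the two normal directions. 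This is exactly where the specific structure of the GL potential enters, consistent with Remark (1), and where one leans most heavily on the machinery of \cite{Che, Ste}; away from $\Sigma$ the convergence is smooth and the contribution is governed entirely by $\psi$, hence zero under $b_1(M)=0$.
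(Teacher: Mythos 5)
Your proposal follows essentially the same route as the paper: divide \eqref{eq:2nd-innervar-solution} by $|\log\ep_k|$, use $b_1(M)=0$ to kill the harmonic part, pass to the limit term by term, and do the pointwise algebra in an adapted frame to recover $\delta^2 V(X,X)$ plus the normal-normal remainder. The refined tensor convergence you flag as the main obstacle --- that $\frac{1}{|\log\ep_k|}\nabla_{e_i}u_{\ep_k}\cdot\nabla_{e_j}u_{\ep_k}\,d\mu_g \to A_{ij}\,d\|V\|$ with $A$ the orthogonal projection onto $T_x^{\perp}\Sigma$ --- is exactly what is established in \cite[Section 7]{Che} and is simply quoted in the paper's proof, so no new argument is needed there.
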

\begin{rmk}
Note that the term
\[
\big(\langle \nabla_{\nu_1}X, \nu_2 \rangle + \langle \nabla_{\nu_2}X, \nu_1 \rangle\big)^{2} + \big(\langle \nabla_{\nu_1}X, \nu_1 \rangle - \langle \nabla_{\nu_2}X, \nu_2 \rangle\big)^{2}
\]
is independent of the choice of $\nu_1, \nu_2$.
\end{rmk}
\begin{proof}[Proof of Proposition \ref{prop:2nd-innvar-lim}]
The second variation formula for a general varifold in a Riemannian manifold can be found in~\cite[p.435]{LS}. Applying it to the stationary rectifiable $(n-2)$-varifold $V$ and expressing the result in our present notation, we arrive at
\begin{align}
\delta^{2}V(X, X) = \int_{\Sigma} \big(\Div_{T\Sigma}X\big)^{2} - &\sum_{i, j = 1}^{n-2}\langle \nabla_{\tau_{i}}X, \tau_{j} \rangle \langle \nabla_{\tau_{j}}X, \tau_{i} \rangle \label{eq:varifold-2nd-var}\\
\nonumber &+ \sum_{i = 1}^{n-2} \big| (\nabla_{\tau_i}X)^{\perp} \big|^{2} - \sum_{i = 1}^{n-2}R(X, \tau_i, \tau_i, X) d\|V\|,
\end{align}
where $\tau_{1}, \cdots, \tau_{n-2}$ is any orthonormal basis of $T_{x}\Sigma$ whenever the latter exists, and $(\cdot)^{\perp}$ denotes the orthogonal projection onto $T_{x}^{\perp}\Sigma$.

By Theorem~\ref{thm:GL-conv}, since $b_1(M) = 0$ by assumption, the harmonic one form $\psi$ must be identically zero, and we have
\[
\frac{1}{|\log\ep_{k}|}e_{\ep_{k}}(u_{\ep_{k}}) d\mu_{g} \to \|V\| \text{ as Radon measures on }M.
\]
Moreover, in~\cite[Section 7]{Che} it is shown that on each geodesic ball $B$ with local orthogonal frame $e_{1}, \cdots, e_{n}$, there exist $\|V\|$-measurable functions $A_{ij}$ such that 
\[
\frac{1}{|\log\ep_{k}|}\nabla_{e_{i}}u_{k} \cdot \nabla_{e_j}u_{k}d\mu_g \to A_{ij} d\|V\| \text{ as measures on $B$},
\]
where the matrix $I - A(x)$ projects orthogonally onto $T_x \Sigma$ for $\|V\|$-a.e. $x \in B$. Consequently, $A(x)$ projects onto $T_{x}^{\perp}\Sigma$ for $\|V\|$-a.e. $x \in B$. Moreover, letting $\omega_1, \cdots, \omega_n$ be the coframe dual to $e_1, \cdots, e_n$, we see that the tensor
\[
A_{ij}\omega^i\otimes \omega^j
\]
is independent of the choice of frame $e_1, \cdots, e_n$. Thus the $A_{ij}$'s obtained on different geodesic balls patch together to define a global object on $M$.

We now divide~\eqref{eq:2nd-innervar-solution} by $|\log\ep_{k}|$ and let $k$ tend to infinity. By the above discussion, we see that 
\begin{align}
\nonumber &\lim_{k \to \infty} \frac{1}{|\log \ep_k|}\delta^{2}E_{\ep_k}(u_{\ep_k})(X, X)\\
\nonumber =& \int_{M}\big( (\Div X)^{2} - \Ric(X, X) - \langle \nabla_{e_{i}}X, e_{j} \rangle \langle \nabla_{e_{j}}X, e_{i} \rangle \big) d\|V\|\\
\nonumber&-\int_{M} \big(2 \langle \nabla_{e_{i}}X, e_{j} \rangle \Div X - R(e_{i}, X, X, e_{j})+ \langle \nabla_{e_{i}}X, \nabla_{e_{j}}X \rangle\big) A_{ij} d\|V\|\\
\label{eq:GL-2ndvar-lim-1}&+ \int_{M}(\cL_{X}g)_{e_{i}, e_{k}} (\cL_{X}g)_{e_{j}, e_{k}} A_{ij}d\|V\|.
\end{align}
At a point $x \in \Sigma$ where $T_{x}\Sigma$ exists, since the integrands above do not depend on the choice of orthonormal basis $e_{1}, \cdots, e_{n}$, we may assume that $e_{1}, \cdots, e_{n- 2} $ span $T_{x}\Sigma$, while $\nu_1 := e_{n-1}$ and $\nu_{2}:= e_{n}$ form a basis for $T_{x}^{\perp}\Sigma$. Recalling that $A$ projects orthogonally onto $T_{x}^{\perp}\Sigma$, we can rewrite the various terms in the integrands above as follows:
\begin{align*}
(\Div X)^{2} &= (\Div_{T\Sigma}X + \Div_{T^{\perp}\Sigma}X)^{2}\\
 A_{ij}\langle \nabla_{e_{i}}X, e_{j} \rangle \Div X  &= (\Div_{T^{\perp}\Sigma}X) (\Div_{T\Sigma}X + \Div_{T^{\perp}\Sigma}X)\\
A_{ij}R(e_i, X, X, e_j) =& R(\nu_1, X, X, \nu_1) + R(\nu_2, X, X, \nu_2)\\
A_{ij}\langle \nabla_{e_i}X, \nabla_{e_j}X \rangle =& |\nabla_{\nu_{1}}X|^{2} + |\nabla_{\nu_{2}}X|^{2}\\
\sum_{k = 1}^{n}(\cL_{X}g)_{e_{i}, e_{k}} (\cL_{X}g)_{e_{j}, e_{k}} A_{ij} =& \sum_{i = 1}^{2}\sum_{k = 1}^{n}(\langle \nabla_{\nu_{i}}X, e_{k} \rangle + \langle \nabla_{e_k}X, \nu_i \rangle)^{2} \nonumber\\
=& |\nabla_{\nu_{1}}X|^{2} + |\nabla_{\nu_{2}}X|^{2} + \sum_{k = 1}^{n}\big|(\nabla_{e_k}X)^{T^{\perp}\Sigma}\big|^{2} \nonumber\\&+ 2\sum_{i = 1}^{2}\sum_{k = 1}^{n}\langle \nabla_{\nu_i}X, e_{k} \rangle\langle \nabla_{e_k}X, \nu_i \rangle.
\end{align*}
Putting these back into~\eqref{eq:GL-2ndvar-lim-1} and noticing some cancellations, we get
\begin{align*}
&\lim_{k \to \infty} \frac{1}{|\log \ep_k|}\delta^{2}E_{\ep_k}(u_{\ep_k})(X, X)\\
=& \int_{M} (\Div_{T\Sigma}X)^{2} - (\Div_{T^{\perp}\Sigma}X)^{2} - \sum_{i= 1}^{n-2}R(e_i, X, X, e_i) + \sum_{k = 1}^{n}\big|(\nabla_{e_k}X)^{T^{\perp}\Sigma}\big|^{2}\\
&+ 2\sum_{i = 1}^{2}\sum_{k = 1}^{n}\langle \nabla_{\nu_i}X, e_{k} \rangle\langle \nabla_{e_k}X, \nu_i \rangle - \sum_{i, j= 1}^{n}\langle \nabla_{e_{i}}X, e_{j} \rangle \langle \nabla_{e_{j}}X, e_{i} \rangle d\|V\|.
\end{align*}
To continue, note that 
\begin{align*}
-(\Div_{T^{\perp}\Sigma}X)^{2} =& -\langle \nabla_{\nu_1}X, \nu_1 \rangle^{2} - \langle \nabla_{\nu_2}X, \nu_2 \rangle^{2} - 2\langle \nabla_{\nu_1}X, \nu_1 \rangle\langle \nabla_{\nu_2}X, \nu_2 \rangle\\
 \sum_{k = 1}^{n}\big|(\nabla_{e_k}X)^{T^{\perp}\Sigma}\big|^{2} =&  \sum_{k = 1}^{n-2}\big|(\nabla_{e_k}X)^{T^{\perp}\Sigma}\big|^{2} + \sum_{i, j = 1}^{2}\langle \nabla_{\nu_i}X, \nu_j \rangle^{2}\\
2\sum_{i = 1}^{2}\sum_{k = 1}^{n}\langle \nabla_{\nu_i}X, e_{k} \rangle\langle \nabla_{e_k}X, \nu_i \rangle =& 2\sum_{i = 1}^{2}\sum_{k = 1}^{n-2}\langle \nabla_{\nu_i}X, e_{k} \rangle\langle \nabla_{e_k}X, \nu_i \rangle + 2\sum_{i,j = 1}^{2}\langle \nabla_{\nu_i}X, \nu_j \rangle\langle \nabla_{\nu_j}X, \nu_i \rangle\\
-\sum_{i, j = 1}^{n}\langle \nabla_{e_{i}}X, e_{j} \rangle \langle \nabla_{e_{j}}X, e_{i} \rangle  =& -\sum_{i, j = 1}^{n-2}\langle \nabla_{e_{i}}X, e_{j} \rangle \langle \nabla_{e_{j}}X, e_{i} \rangle  - 2\sum_{i = 1}^{n-2}\sum_{j = 1}^{2}\langle \nabla_{e_{i}}X, \nu_{j} \rangle \langle \nabla_{\nu_{j}}X, e_{i} \rangle \\
&- \sum_{i, j = 1}^{2}\langle \nabla_{\nu_{i}}X, \nu_{j} \rangle \langle \nabla_{\nu_{j}}X, \nu_{i} \rangle 
\end{align*}
Substituting and making some cancellations, we arrive at
\begin{align*}
&\lim_{k \to \infty} \frac{1}{|\log \ep_k|}\delta^{2}E_{\ep_k}(u_{\ep_k})(X, X)\\
=& \int_{M} \big[(\Div_{T\Sigma}X)^{2}  - \sum_{i= 1}^{n-2}R(e_i, X, X, e_i) + \sum_{k = 1}^{n-2}\big|(\nabla_{e_k}X)^{T^{\perp}\Sigma}\big|^{2} - \sum_{i, j = 1}^{n-2}\langle \nabla_{e_{i}}X, e_{j} \rangle \langle \nabla_{e_{j}}X, e_{i} \rangle \big]d\|V\|\\
&+ \int_{M} \sum_{i, j = 1}^{2}\langle \nabla_{\nu_i}X, \nu_j \rangle\langle \nabla_{\nu_j}X, \nu_i \rangle + \langle \nabla_{\nu_1}X, \nu_2 \rangle^{2} + \langle \nabla_{\nu_2}X, \nu_1  \rangle^{2}  - 2\langle \nabla_{\nu_1}X, \nu_1 \rangle\langle \nabla_{\nu_2}X, \nu_2  \rangle d\|V\|\\
=&  \int_{M} \big[(\Div_{T\Sigma}X)^{2}  - \sum_{i= 1}^{n-2}R(e_i, X, X, e_i) + \sum_{k = 1}^{n-2}\big|(\nabla_{e_k}X)^{T^{\perp}\Sigma}\big|^{2} - \sum_{i, j = 1}^{n-2}\langle \nabla_{e_{i}}X, e_{j} \rangle \langle \nabla_{e_{j}}X, e_{i} \rangle \big]d\|V\|\\
&+ \int_{M} \big(\langle \nabla_{\nu_1}X, \nu_2 \rangle + \langle \nabla_{\nu_2}X, \nu_1 \rangle\big)^{2} + \big(\langle \nabla_{\nu_1}X, \nu_1 \rangle - \langle \nabla_{\nu_2}X, \nu_2 \rangle\big)^{2} d\|V\|.
\end{align*}
Recalling~\eqref{eq:varifold-2nd-var}, we are done.
\end{proof}

\section{Stability and index of Ginzburg--Landau solutions on $S^{n}$}
As in~\cite{LS}, the conformal Killing vector fields of $S^{n}$ which are orthogonal to the Killing fields plays an important role in studying the stability and index of solutions to the Ginzburg--Landau equations. The proposition below summarizes some of the well-known properties of these vector fields. 
\begin{prop}\label{prop:confKilling}
For $\xi \in \RR^{n + 1}$, define $f_{\xi}:S^{n} \to \RR$ by $f_{\xi}(x) = \langle x, \xi \rangle$ and let $X_{\xi} = \nabla f_{\xi}$, where $\nabla$ denotes the covariant derivative on $S^{n}$. Then we have
\begin{enumerate}
\item[(a)] $X_{\xi}(x) = \xi - f_{\xi}(x)x$ for all $x \in S^{n}$. Consequently $|X_{\xi}|^{2} = |\xi|^2 - f_{\xi}^{2}$.
\item[(b)] $\langle \nabla_{v}X_{\xi}, w \rangle = -f_{\xi}(x) \langle v, w \rangle \text{ for all }x \in S^{n},\ v, w \in T_{x}S^{n}$.
\item[(c)] $\Div X_{\xi} = -nf_{\xi}$.
\end{enumerate}
\end{prop}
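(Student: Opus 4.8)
The plan is to work extrinsically, regarding $S^{n} \subset \RR^{n+1}$ with the induced metric, and to exploit the fact that $f_{\xi}$ is nothing but the restriction to $S^{n}$ of the linear functional $y \mapsto \langle y, \xi \rangle$ on $\RR^{n+1}$, whose Euclidean gradient is the \emph{constant} vector field $\xi$.

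For part (a), I would first recall that for a function $h$ on $\RR^{n+1}$, the intrinsic gradient of $h|_{S^n}$ is the tangential component of the ambient gradient: since $x$ is the outward unit normal of $S^n$ at $x$, one has $\nabla^{S^n}(h|_{S^n})(x) = \nabla^{\RR^{n+1}}h(x) - \langle \nabla^{\RR^{n+1}}h(x), x \rangle x$. Applying this to $h(y) = \langle y, \xi \rangle$ gives $X_\xi(x) = \xi - \langle \xi, x \rangle x = \xi - f_\xi(x)x$. The norm identity then follows by expanding $|\xi - f_\xi(x)x|^2 = |\xi|^2 - 2 f_\xi(x)\langle \xi, x \rangle + f_\xi(x)^2|x|^2$ and using $|x| = 1$ together with $\langle \xi, x \rangle = f_\xi(x)$.

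For part (b), I would use the Gauss formula $\nabla^{S^n}_v Y = \big(D_v \widetilde{Y}\big)^{T}$, where $D$ is the flat connection on $\RR^{n+1}$, $\widetilde Y$ is any local extension of $Y$, and $(\cdot)^{T}$ denotes the orthogonal projection onto $T_x S^n$ (equivalently, subtraction of the component along $x$). The point is that the expression $\xi - \langle x, \xi \rangle x$ from part (a) is defined for \emph{all} $x \in \RR^{n+1}$, so it furnishes a convenient global extension of $X_\xi$; differentiating it in a direction $v \in T_x S^n$ gives $D_v X_\xi = -\langle v, \xi \rangle x - \langle x, \xi \rangle v = -\langle v, \xi \rangle x - f_\xi(x) v$. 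The first term is normal to $S^n$, hence is killed by $(\cdot)^{T}$, leaving $\nabla^{S^n}_v X_\xi = -f_\xi(x) v$. Pairing with $w \in T_x S^n$ yields the stated identity. Part (c) is then immediate: choosing an orthonormal basis $e_1, \dots, e_n$ of $T_x S^n$ and summing the identity of (b), we get $\Div X_\xi = \sum_{i=1}^{n} \langle \nabla_{e_i} X_\xi, e_i \rangle = -f_\xi(x)\sum_{i=1}^{n}\langle e_i, e_i \rangle = -n f_\xi(x)$.

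I do not anticipate any genuine obstacle here; the only point deserving a moment's care is checking that $\xi - \langle x, \xi \rangle x$ is a legitimate extension of $X_\xi$ off $S^n$ and that its normal component is correctly identified, but this is painless since the formula is manifestly defined on all of $\RR^{n+1}$. (One could instead avoid extensions altogether and compute in a local parametrization, but the extrinsic argument above is cleaner and makes the conformal-Killing nature of $X_\xi$, visible already in (b), transparent.)
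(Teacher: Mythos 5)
Your argument is correct. Note that the paper itself offers no proof of this proposition --- it is stated as a summary of well-known facts about the conformal Killing fields $X_\xi$ --- so your write-up simply supplies the standard justification: part (a) via tangential projection of the ambient gradient, part (b) via the Gauss formula applied to the globally defined extension $\xi - \langle x, \xi \rangle x$ (whose normal component $-\langle v, \xi\rangle x$ is correctly discarded), and part (c) by tracing (b) over an orthonormal basis of $T_x S^n$. No gaps; this is exactly the computation the paper implicitly relies on.
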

Now suppose $u: S^{n} \to \CC$ is a solution to~\eqref{eq:GL-eq}. Using Corollary~\ref{cor:outerinner} with $M = S^{n}$ and $X = X_{\xi}$ for any $\xi \in S^n$, along with Proposition~\ref{prop:confKilling}, we get
\begin{align}
\nonumber \delta^{2}E_{\ep}(u)(X_\xi, X_\xi)=& \int_{M} e_{\ep}(u)\big( (\Div X_{\xi})^{2} - \Ric(X_{\xi}, X_{\xi}) - \langle \nabla_{e_{i}}X_{\xi}, e_{j} \rangle \langle \nabla_{e_{j}}X_{\xi}, e_{i} \rangle \big) d\mu\\ 
\nonumber & -\int_{M} 2\langle \nabla_{\nabla u}X_{\xi}, \nabla u \rangle  \Div X_{\xi}  - R(\nabla u, X_{\xi}, X_{\xi}, \nabla u) + |\nabla_{\nabla u}X_{\xi}|^{2} d\mu\\
\nonumber & + \int_{M}| \iota_{\nabla u}\cL_{X_\xi}g |^{2} d\mu\\
\nonumber =&\int_{M} e_{\ep}(u)\big( (n^{2}f_{\xi}^{2} - (n-1)(1 - f_{\xi}^{2}) - nf_{\xi}^{2} \big) d\mu\\ 
\nonumber & -\int_{M} 2n|\nabla u|^{2}f_{\xi}^{2}  - (1 - f_{\xi}^{2})|\nabla u|^{2} + |\langle \nabla u, X_{\xi} \rangle|^{2} +  f_{\xi}^{2}|\nabla u|^{2} d\mu\\
 & + 4\int_{M} f_{\xi}^{2}|\nabla u|^{2} d\mu. \label{eq:2nd-innvar-conKill}
\end{align}

We are now ready to prove one of our main theorems.
\vskip 1mm
\noindent\textbf{Theorem~\ref{thm:GL-no-stable}.}
\emph{Every stable solution of~\eqref{eq:GL-eq} on $S^{n}$ for $n \geq 2$ is necessarily constant with absolute value $1$, regardless of the value of $\ep$.}
\begin{proof}
Let $u$ be a stable solution. Since $0$ is easily seen to be unstable, and since non-zero constant solutions must have absolute value $1$, we only have to prove that $u$ is constant. The idea of proof comes from the averaging method in~\cite{LS}. Namely, letting $\xi_{1}, \cdots, \xi_{n + 1}$ denote the standard basis for $\RR^{n + 1}$, we apply~\eqref{eq:2nd-innvar-conKill} to $\xi = \xi_{1}, \cdots, \xi_{n + 1}$ and sum up the result. Noting that 
\[
\sum_{i = 1}^{n + 1}f_{\xi_{i}}^{2} = 1  \text{ and }\sum_{i = 1}^{n + 1}|\langle \nabla u, X_{\xi_i} \rangle|^2 = |\nabla u|^2
\]
on $S^{n}$, and using Corollary~\ref{coro:inner-outer-coro}, we obtain 
\begin{equation}\label{eq:2nd-innvar-avg}
0 \leq \sum_{i = 1}^{n + 1} \delta^{2}E_{\ep}(u)(X_{\xi_i}, X_{\xi_{i}}) = -(n-2)\int_{M}|\nabla u|^{2}d\mu_{g}.
\end{equation}
When $n \geq 3$, this forces $\nabla u$ to vanish identically, and consequently $u$ is constant. When $n = 2$, Corollary~\ref{coro:inner-outer-coro} and~\eqref{eq:2nd-innvar-avg} imply that
\[
\delta^{2}E_{\ep}(u)(\nabla_{X_{\xi_i}}u, \nabla_{X_{\xi_i}}u) = \delta^{2}E_{\ep}(u)(X_{\xi_i}, X_{\xi_i}) = 0 \text{ for all }i.
\]
Since $u$ is a stable solution, this means that for all $i = 1, \cdots, n + 1$, the function $v_{i} = \langle \nabla u, X_{\xi_i} \rangle$ necessarily lies in the kernel of the operator associated with the bilinear form $\delta^{2}E_{\ep}(u)$. In other words, 
\begin{equation}\label{eq:kernel}
\Delta v_i  = \frac{|u|^2 - 1}{\ep^{2}}v_{i} + \frac{2(u \cdot v_{i})u}{\ep^{2}}.
\end{equation}
On the other hand, commuting derivatives and introducing Ricci curvature terms, we have
\begin{align*}
\Delta v_{i}&= \Delta \langle \nabla u, X_{\xi_i} \rangle = \langle \Delta \nabla u, X_{\xi_i} \rangle + 2\langle \nabla^{2}u, \nabla X_{\xi_i} \rangle + \langle \nabla u, \Delta X_{\xi_i} \rangle\\
&= \langle \nabla \Delta u + (n - 1)\nabla u, X_{\xi_i} \rangle + 2\langle \nabla^2 u, \nabla X_{\xi_i} \rangle + \langle \nabla u, \nabla \Delta f_{\xi_i} + (n - 1)\nabla f_{\xi_i} \rangle\\
&= \frac{|u|^2 - 1}{\ep^{2}}v_{i} + \frac{2(u \cdot v_{i})u}{\ep^{2}} + 2(n - 1)\langle \nabla u, X_{\xi_i} \rangle + 2\langle \nabla^2 u, \nabla X_{\xi_i} \rangle  + \langle \nabla u, \nabla \Delta f_{\xi_i} \rangle.
\end{align*}
Recalling~\eqref{eq:kernel} and that $\Delta f_{\xi_i} = -n f_{\xi_i}$, we deduce that 
\begin{equation}
(n - 2)\langle \nabla u, X_{\xi_i} \rangle + 2\langle \nabla^2 u, \nabla X_{\xi_i} \rangle = 0.
\end{equation}
Since we are in the case $n = 2$, the first term drops and we get that 
\begin{equation}
2\langle \nabla^2 u, \nabla X_{\xi_i} \rangle = 0 \text{ on $S^{n}$, for all }i.
\end{equation}
Using Proposition~\ref{prop:confKilling}(b), we arrive at
\begin{equation}
-f_{\xi_i}(x)\Delta u(x) = 0 \text{ for all }x \in S^{n},\ i = 1, \cdots, n + 1,
\end{equation}
which means that $\Delta u$ vanishes identically on $S^{n}$, and consequently $u$ is constant.
\end{proof}

Next we show how the comparison result, Proposition~\ref{prop:2nd-innvar-lim}, can be used to give index lower bounds for solutions to~\eqref{eq:GL-eq} on $S^{n}$ under appropriate assumptions. That is, we prove \\

\noindent\textbf{Theorem~\ref{thm:GL-index-bound}.}
\emph{Suppose $n \geq 3$. For all $ C> 0$, there exists $\ep_{0} > 0$ such that if $u$ is a solution on $S^{n}$ to~\eqref{eq:GL-eq} with $\ep < \ep_{0}$, and if
\begin{equation}\label{eq:E_0-bound}
C^{-1}|\log\ep| \leq E_{\ep}(u) \leq C|\log\ep|,
\end{equation}
then the Morse index of $u$ as a critical point of $E_{\ep}$ is at least $2$.}
\begin{proof}
We argue by contradiction. Negating the conclusion yields a $C > 0$ and a sequence $u_{\ep}$ of solutions to~\eqref{eq:GL-eq} with $\ep \to 0$ such that~\eqref{eq:E_0-bound} holds for all $\ep$, but each $u_{\ep}$ has index smaller than $2$. Applying Proposition~\ref{prop:2nd-innvar-lim} to $u_{\ep}$, we get a subsequence, which we do not relabel, and a stationary rectifiable $(n-2)$-varifold $V$ in $S^{n}$, such that~\eqref{eq:2nd-innvar-lim} holds. Moreover, the first inequality in~\eqref{eq:E_0-bound} implies that $V$ is non-trivial.

By Proposition~\ref{prop:confKilling}(b), we see that at every $x \in S^{n}$ where $T_{x}\Sigma$ exists, we have
\[
 \big(\langle \nabla_{\nu_1}X_\xi, \nu_2 \rangle + \langle \nabla_{\nu_2}X_\xi, \nu_1 \rangle\big)^{2} + \big(\langle \nabla_{\nu_1}X_\xi, \nu_1 \rangle - \langle \nabla_{\nu_2}X_\xi, \nu_2 \rangle\big)^{2} = 0 \text{ for all }\xi \in \RR^{n + 1}.
\]
Consequently the second term on the right of~\eqref{eq:2nd-innvar-lim} vanishes, and we are left with
\begin{equation*}
\lim_{\ep \to 0}\frac{1}{|\log\ep|}\delta^{2}E_{\ep}(u_{\ep})(X_\xi, X_\xi) = \delta^{2}V(X_{\xi}, X_{\xi}) \text{ for all }\xi \in \RR^{n + 1}.
\end{equation*}
Polarizing the quadratic forms involved and noting the linearity of $\xi \mapsto X_{\xi}$, we see that
\begin{equation}\label{eq:clean-lim}
\lim_{\ep \to 0}\frac{1}{|\log\ep|}\delta^{2}E_{\ep}(u_{\ep})(X_\xi, X_\eta) = \delta^{2}V(X_{\xi}, X_{\eta}) \text{ for all }\xi,\eta \in \RR^{n  + 1}.
\end{equation}
To continue, we note the following fact which is implicit in~\cite{LS}, namely that for any stationary varifold $V$ and any $\xi \in \RR^{n + 1}$ there holds
\begin{equation}\label{eq:varifold-unstable}
\delta^{2}V(X_\xi, X_\xi) = - (n-2) \int_{\Sigma} |(X_\xi)^{\perp}|^{2} d\|V\|.
\end{equation}
This can be proved, for instance, by substituting $X = X_\xi$ into~\eqref{eq:varifold-2nd-var} and combining the result with the fact that $\delta V(f_\xi X_\xi) = 0$ by stationarity. In any case, since $V \neq 0$, the identity~\eqref{eq:varifold-unstable} implies, as in~\cite[Section 5]{Sim}, that there exist $\xi, \eta \in \RR^{n + 1}$, linearly independent, such that $\delta^2 V$ is negative-definite when restricted to $\Span\{X_{\xi}, X_{\eta}\}$. But then we see by~\eqref{eq:clean-lim} that, for $\ep$ sufficiently small, the matrix
\[
\left(
\begin{array}{cc}
\delta^{2}E_{\ep}(u_{\ep})(X_{\xi}, X_{\xi}) & \delta^{2}E_{\ep}(u_{\ep})(X_{\xi}, X_{\eta})\\
\delta^{2}E_{\ep}(u_{\ep})(X_{\eta}, X_{\xi}) & \delta^{2}E_{\ep}(u_{\ep})(X_{\eta}, X_{\eta})
\end{array}
\right)
\]
is negative-definite. Recalling Corollary~\ref{coro:inner-outer-coro}, it follows that, for all sufficiently small $\ep$, 
\begin{equation}\label{eq:GL-outer-unstable}
\delta^{2}E_{\ep}(u_{\ep})(v, v) < 0 \text{ for all }v \in \Span_{\RR}\{\nabla_{X_{\xi}}u_{\ep}, \nabla_{X_{\eta}}u_{\ep}\}.
\end{equation}
To obtain a contradiction, it remains to show that $\nabla_{X_{\xi}}u_{\ep}$ and $\nabla_{X_{\eta}}u_{\ep}$ are linearly independent over $\RR$. To that end, suppose that $a\nabla_{X_{\xi}}u_{\ep} + b\nabla_{X_{\eta}}u_{\ep} \equiv 0$ for some $a, b \in \RR$. Then, letting $\varphi_t$ denote the flow generated by $X_{\widetilde{\xi}}$, where $\widetilde{\xi}:= a\xi + b\eta$, we have 
\[
u_{\ep}\circ \varphi_t = u_{\ep} \text{ for all }t.
\]
Recalling that, as $t\to \infty$, the conformal diffeomorphisms $\varphi_{t}: S^{n} \to S^{n}$ converge locally uniformly to a constant away from its antipodal point, we deduce that $u_{\ep}$ must be constant, so either $u_{\ep} \equiv 0$, in which case $E_{\ep}(u_{\ep}) = \frac{\mu_g(M)}{4\ep^2}$, or $|u_{\ep}| \equiv 1$, in which case $E_{\ep}(u_{\ep}) = 0$. But both possibilities are ruled out by~\eqref{eq:E_0-bound} when $\ep$ is small enough, and thus $\nabla_{X_{\xi}}u_{\ep}$ and $\nabla_{X_{\eta}}u_{\ep}$ must be linearly independent over $\RR$. Returning to~\eqref{eq:GL-outer-unstable}, we conclude that the index of $u_{\ep}$ is at least $2$ for small enough $\ep$, a contradiction.
\end{proof}

\section{Stable solutions on $\CC\PP^{n}$}
In this section we study solutions to~\eqref{eq:GL-eq} on $\CC\PP^{n}$ with the Fubini--Study metric $g = g_{FS}$ as introduced in Section 1.2. Again motivated by \cite{LS}, we make use of the real holomorphic vector fields on $\CC\PP^{n}$. Below we review some basic facts concerning these objects.

\subsection{Real holomorphic and Killing vector fields on $\CC\PP^{n}$}
Letting $J$ denote the complex structure on $\CC\PP^{n}$, recall that a vector field $V$ is real holomorphic if and only if $\cL_V J = 0$. Since $\CC\PP^n$ is  a compact K\"ahler manifold, any Killing vector field is real holomorphic~\cite{Moro}. Moreover, if $V$ is real holomorphic, then so is $JV$. Thus, letting $\mathcal{K}$ denote the set of Killing vector fields, it follows that the vector fields in
\[
J\mathcal{K} = \{JV\ |\ V \in \cK\}
\]
are real holomorphic vector fields. 

The vector fields in $J\cK$ are similar to vector fields $X_{\xi}$ on $S^{n}$ in that they are the gradients of eigenfunctions corresponding to the first non-zero eigenvalue of the Laplace operator on $\CC\PP^{n}$. These first eigenfunctions are given as follows~\cite{BGM}: For each matrix $w \in H_{n + 1}\setminus\{0\}$, where $H_{n + 1} = \{ w \in \CC^{(n + 1) \times (n + 1)}\ |\ w = w^{\ast},\ \tr w = 0 \}$, we define
\begin{equation}\label{eq:CPn-eigenfunction}
f_{w}(z_0, \cdots, z_n) = w_{ij}z_i \bar{z}_j.
\end{equation}
Viewing $\CC\PP^{n}$ as the quotient of $S^{2n + 1} \subset \CC^{n + 1}$ by the $U(1)$-action 
\[(z_0, \cdots, z_n) \mapsto (e^{i\theta}z_0, \cdots, e^{i\theta}z_n),
\]
then the restrictions of $f_{w}$ to $S^{2n + 1}$ are $U(1)$-invariant and descend to $\CC\PP^{n}$ to give the first eigenfunctions of $\Delta_{g}$, which we still call $f_{w}$, by slight abuse of notation. Recall that we normalized $g$ in such a way that $\Ric_g = \frac{n + 1}{2} g$ and 
\[\Delta_g f_w = -(n + 1)f_w \text{ for all }w\in H_{n + 1}.\]



Next we recall some facts about Killing vector fields on $\CC\PP^n$~\cite[Chapter XI]{KN}. Given $A \in \fsu(n + 1)$, the restrictions of $e^{tA}$ to $S^{2n + 1}$ descend to isometries of $\CC\PP^n$ acting by
\[
\varphi_t([z]) = [e^{tA}z]\text{ for }z \in \CC^{n + 1}.
\]
These produce a Killing field given by $W_A= \frac{d}{dt}\big|_{t = 0}\varphi_t$. The map $A \mapsto W_A$ from $\fsu(n + 1)$ to $\cK$ in fact is an isomorphism of Lie algebras, and induces an inner product on $\cK$ via
\begin{equation}\label{eq:Killing-form}
( W_A, W_B )_{\cK} = 2\tr(AB^{\ast}),\ A, B \in \fsu(n + 1).
\end{equation}
Moreover, we have
\begin{equation}\label{eq:adj-skew}
([W_A, W_B], W_C)_{\cK} = -(W_B, [W_A, W_C])_{\cK} \text{ for all }A, B, C \in \fsu(n + 1).
\end{equation}
More importantly, for any $x \in \CC\PP^n$, with respect to the inner product~\eqref{eq:Killing-form}, we have the orthogonal decomposition
\[
\cK = \ff_x \oplus \fp_x,
\]
where $\ff_x = \{V \in \cK\ |\ V\vert_{x} = 0\}$ and $\fp_x = \{V \in \cK\ |\ (\nabla V)\vert_{x} = 0\}$. The former generates isometries of $\CC\PP^{n}$ which fix $x$, while the latter is isometric to $T_{x}\CC\PP^{n}$ through the evaluation map $V \mapsto V|_{x}$. As such, each $\xi \in T_{x}\CC\PP^n$ can be uniquely extended to a Killing field $\widetilde{\xi}$ in $\fp_{x}$, such that
\begin{equation}\label{eq:identify-SUn}
(\widetilde{\xi}, \widetilde{\eta})_{\cK} = \langle \xi, \eta \rangle_{FS} \text{ for all }\xi, \eta \in T_{x}\CC\PP^{n}.
\end{equation}
When there is no danger of confusion, we will drop the tilde ( $\widetilde{\ }$ ) and rely on the context to distinguish between a tangent vector in $T_{x}\CC\PP^n$ and the Killing field in $\fp_{x}$ it determines.

Finally, by the fact that $\CC\PP^n$ is a symmetric space, or by direct computation, we have
\begin{equation}\label{eq:bracket-commutator}
[\fp_x, \fp_x] \subset \ff_x,\ [\ff_x, \ff_x] \subset \ff_x \text{ and }[\fp_x, \ff_x] \subset \fp_x.
\end{equation}
For instance, the last fact can be seen as follows: take $X \in \fp_x$ and $V \in \ff_x$. Then, for all $W \in T_{x}\CC\PP^n$, we have, at the point $x$,
\begin{align*}
\nabla_W \big( [X, V] \big)&= \nabla_W \big( \nabla_X V  - \nabla_V X\big)\\
&= \nabla^2_{W, X}V + \nabla_{\nabla_W X}V - \nabla^{2}_{W, V}X - \nabla_{\nabla_W V}X.
\end{align*}
Since $X \in \fp_x$, the second and fourth terms in the last line vanish at $x$, whereas the first and third terms can be rewritten using the identity 
\begin{equation}\label{eq:Killing-curvature}
\nabla^{2}_{Y, Z}X = R_{Y, X}Z \text{ for $X$ Killing and $Y, Z$ arbitrary.}
\end{equation}  
Thus, because $V|_x = 0$, we have
\[
\nabla_W \big( [X, V] \big) = R_{W, V}X - R_{W, X} V = 0 \text{ at }x,
\]
for any $W \in T_{x}\CC\PP^n$. That is, the Killing field $[X, V]$ belongs to $\fp_x$.

\subsection{Stable solutions on $\CC\PP^{n}$}
We are now ready to state the key computation.
\begin{prop}\label{prop:GL-CPn}
Let $u: \CC\PP^{n} \to \CC$ be a solution to~\eqref{eq:GL-eq}, and let $V_1, \cdots, V_{q}$ be any orthonormal basis of $\cK$. Then
\begin{equation}\label{eq:GL-CPn}
\sum_{i = 1}^{q}\delta^{2}E_{\ep}(u)(JV_{i}, JV_{i}) = 0.
\end{equation}
\end{prop}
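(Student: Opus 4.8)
The plan is to apply the inner-variation formula from Corollary~\ref{cor:outerinner} with $X = JV_i$ for each $i$, sum over $i$, and show that every term on the right-hand side of~\eqref{eq:2nd-innervar-solution} either vanishes after summation or cancels against another, using the algebraic structure of $\cK$ together with the K\"ahler identities~\eqref{eq:kahler-curvature} and~\eqref{eq:Killing-curvature}. The key geometric input is that for a Killing field $V$ one has $\nabla V$ skew-symmetric and $\nabla^2_{Y,Z}V = R_{Y,V}Z$, so that $JV$ (being the gradient of a first eigenfunction $f_w$) satisfies $\nabla_Y(JV) = J\nabla_Y V$ and one can compute $\langle \nabla_Y (JV), Z\rangle$, $\Div(JV) = -(n+1)f_w$, and $\nabla_{JV}(JV)$ fairly explicitly. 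The crucial observation is that the quantities appearing in~\eqref{eq:2nd-innervar-solution} are all quadratic in $X$, so summing over an orthonormal basis $\{V_i\}$ amounts to taking a trace over $\cK$, and a trace over $\cK \cong \fsu(n+1)$ is computable because the $JV_i$ and their covariant derivatives at a fixed point $x$ are controlled by the decomposition $\cK = \ff_x \oplus \fp_x$ from~\eqref{eq:bracket-commutator} and the identification~\eqref{eq:identify-SUn}.

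Concretely, I would proceed as follows. First, fix $x \in \CC\PP^n$ and work with the decomposition $\cK = \ff_x \oplus \fp_x$: the part $\fp_x$ is identified isometrically with $T_x\CC\PP^n$ via evaluation, and the part $\ff_x$ has $(\nabla V)|_x$ controlled through $\nabla^2 V = R(\cdot, V)\cdot$. Next, for $V \in \cK$ write $W = JV$ and record pointwise formulas at $x$: $W|_x = J(V|_x)$, $\nabla_Y W = J(\nabla_Y V)$, and hence $\Div W = \tr(Y \mapsto J\nabla_Y V) = 0$ since $\nabla V$ is skew and $J$ antisymmetric — wait, more precisely $\Div(JV) = -(n+1)f_w$ because $JV = \nabla f_w$, so I will instead use $\Div(JV_i) = -(n+1)f_{w_i}$ and the eigenvalue relation $\Sigma_i f_{w_i}^2 = $ const (after appropriate normalization, the trace of the $f_w$'s over an orthonormal basis is a constant function, analogous to $\Sigma f_{\xi_i}^2 = 1$ on $S^n$). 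Then I would evaluate each of the six types of term in~\eqref{eq:2nd-innervar-solution} — the scalar-curvature-type term $e_\ep(u)((\Div X)^2 - \Ric(X,X) - \langle\nabla_{e_i}X,e_j\rangle\langle\nabla_{e_j}X,e_i\rangle)$, the cross terms $-2\langle\nabla_{\nabla u}X,\nabla u\rangle\Div X$, the curvature term $R(\nabla u, X, X, \nabla u)$, the Hessian-squared term $|\nabla_{\nabla u}X|^2$, and the Lie-derivative term $|\cL_X g \llcorner \nabla u|^2$ — after summing over $i$, using the trace identities $\Sigma_i \langle\nabla_Y(JV_i), Z\rangle\langle\nabla_Z(JV_i), Y\rangle$ and $\Sigma_i (JV_i)\otimes(JV_i)$ and $\Sigma_i \nabla(JV_i)\otimes\nabla(JV_i)$ evaluated via the $\ff_x \oplus \fp_x$ splitting and~\eqref{eq:Killing-curvature}. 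The K\"ahler symmetries~\eqref{eq:kahler-curvature} are what make the curvature contributions collapse.

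The main obstacle will be the bookkeeping in evaluating the traces $\Sigma_i \langle\nabla_{e_j}(JV_i), e_k\rangle\langle\nabla_{e_k}(JV_i), e_j\rangle$ and the term $\Sigma_i|\cL_{JV_i}g \llcorner \nabla u|^2$, since the contribution of $\ff_x$ to $\nabla(JV_i)$ is nonzero and involves the curvature tensor quadratically; getting the precise constants (which must conspire to cancel, yielding exactly $0$ rather than a negative multiple of $\int|\nabla u|^2$ as on $S^n$ for $n\geq 3$) requires care. I expect the cancellation to hinge on two facts: (i) $\cL_{JV}g$, for $JV$ the gradient of a first eigenfunction, is essentially $\nabla^2 f_w$ which by the Obata-type/K\"ahler structure has a very rigid form; and (ii) the Ricci term $\Ric(JV_i, JV_i) = \frac{n+1}{2}|JV_i|^2 = \frac{n+1}{2}|V_i|^2$ sums against the $(\Div X)^2$ term in just the right way given $\Delta f_w = -(n+1)f_w$ and the normalization $\Sigma_i f_{w_i}^2 \equiv $ const. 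I would verify the cancellation first in the simplest case $n=1$ (where $\CC\PP^1 = S^2$ with a rescaled round metric, so~\eqref{eq:GL-CPn} should follow from the $n=2$ case of~\eqref{eq:2nd-innvar-avg} up to the metric normalization) as a consistency check, and then carry out the general trace computation.
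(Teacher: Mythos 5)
Your overall strategy is the paper's: apply the second inner variation with $X = JV_i$, fix a point $x$, split $\cK = \ff_x \oplus \fp_x$, and evaluate the resulting traces over $\cK$ using the Killing-field identities and the K\"ahler symmetries. But the proposal stops exactly where the actual proof begins. In the paper every one of the terms in the second variation is reduced to curvature through a single trace identity (Lemma~\ref{lemm:GL-CPn-prelim}), namely $\sum_{k}\langle\nabla_X JV_k, Y\rangle\langle\nabla_W JV_k, Z\rangle = \langle R_{X, JY}JW, Z\rangle$ at $x$, and proving it is where the algebraic inputs \eqref{eq:adj-skew}, \eqref{eq:bracket-commutator}, \eqref{eq:identify-SUn}, \eqref{eq:Killing-curvature} are really used: only the $\ff_x$ part of the basis contributes (since $\nabla V_k|_x = 0$ for $V_k \in \fp_x$), one writes $\nabla_{\cdot}V_k = [\cdot, V_k]$ at $x$, moves brackets with \eqref{eq:adj-skew}, and identifies the double bracket on $\fp_x$ with curvature. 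You neither state nor prove any such identity; you only assert that the traces are ``computable'' and that the constants ``must conspire to cancel.'' That is precisely the mathematical content of the proposition: the same averaging on $S^n$ yields the strictly negative quantity $-(n-2)\int |\nabla u|^2$ in \eqref{eq:2nd-innvar-avg} rather than zero, so the exact vanishing on $\CC\PP^n$ is not a formal consequence of homogeneity and has to be exhibited term by term (using, in addition, the first Bianchi identity together with \eqref{eq:kahler-curvature}). As written, the proposal does not establish \eqref{eq:GL-CPn}.

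Two smaller points. First, the $\ff_x$ contribution to $\nabla (JV_k)$ at $x$ does not enter ``quadratically in the curvature'': each factor is a bracket $[\cdot, V_k]$, and only after summing over the basis of $\ff_x$ does the pair of brackets collapse to a single curvature term, linear in $R$; misjudging this is harmless but suggests the computation has not been attempted. Second, the detour through $\Div(JV_i) = -(n+1)f_{w_i}$ and the claim $\sum_i f_{w_i}^2 \equiv \mathrm{const}$ is both unproved in your sketch (it requires the irreducibility/invariance of the first eigenspace and a normalization matching \eqref{eq:Killing-form} with the $L^2$ inner product) and insufficient: it could at best handle the $(\Div X)^2$ term, while the contractions $\sum_k\langle\nabla_{e_i}JV_k, e_j\rangle\langle\nabla_{e_j}JV_k, e_i\rangle$, $\sum_k|\nabla_{\nabla u}JV_k|^2$ and $\sum_k|\cL_{JV_k}g\,\llcorner\,\nabla u|^2$ still require the trace identity, which then handles $(\Div X)^2$ anyway. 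If you formulate and prove that identity and carry out the term-by-term reduction, your outline becomes the paper's proof; without it there is a genuine gap.
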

We first prove the following lemma which will be used repeatedly in the proof of Proposition~\ref{prop:GL-CPn}. The lemma actually follows from the calculations in~\cite[p.447]{LS}, but we include the proof for completeness.
\begin{lemm}\label{lemm:GL-CPn-prelim}
Fix $x \in \CC\PP^{n}$ and let $V_{1}, \cdots, V_{2n}$ and $V_{2n + 1}, \cdots, V_{q}$ be, respectively, orthonormal bases for $\fp_{x}$ and $\ff_{x}$. 
Then for any $X, Y, W, Z \in T_{x}\CC\PP^{n} \simeq \fp_x$, we have
\begin{equation}\label{eq:GL-CPn-prelim}
\sum_{k = 1}^{q}\langle \nabla_{X}JV_{k}, Y \rangle \langle \nabla_{W}JV_{k}, Z \rangle = \langle R_{X, JY}JW, Z \rangle.
\end{equation}
\end{lemm}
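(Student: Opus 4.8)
The key identity to establish is that summing $\langle \nabla_X JV_k, Y\rangle \langle \nabla_W JV_k, Z\rangle$ over an orthonormal basis of $\cK$ collapses to a single curvature term. The strategy is to split the sum according to the decomposition $\cK = \fp_x \oplus \ff_x$ and observe that the $\ff_x$-part contributes nothing. Indeed, if $V_k \in \ff_x$, then $V_k$ vanishes at $x$, so $JV_k$ vanishes at $x$ too; but $JV_k$ need not have vanishing covariant derivative at $x$, so this alone does not kill the term. Instead I would use that $JV_k$ is real holomorphic, hence $\nabla_X(JV_k) = J(\nabla_X V_k)$ (parallelism of $J$), so $\langle \nabla_X JV_k, Y\rangle = \langle J\nabla_X V_k, Y\rangle = -\langle \nabla_X V_k, JY\rangle$. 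Since $JY \in T_x\CC\PP^n \simeq \fp_x$ as well, the whole sum rewrites as
\[
\sum_{k=1}^{q} \langle \nabla_X V_k, JY\rangle \langle \nabla_W V_k, JZ\rangle,
\]
and now the $V_k \in \ff_x$ terms genuinely drop because $(\nabla V_k)|_x = 0$ for $V_k \in \fp_x$'s complement — wait, that is backwards, so let me be careful: it is the $\ff_x$ elements that have nonzero derivative and the $\fp_x$ elements that have $(\nabla V)|_x = 0$. So in fact we must keep the $\ff_x$ terms and discard nothing automatically; the right move is different.

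**Revised plan.** The correct bookkeeping is as follows. Write $V_k = A_k + B_k$ informally is not available since the basis is adapted: $V_1,\dots,V_{2n} \in \fp_x$ and $V_{2n+1},\dots,V_q \in \ff_x$. For $V_k \in \fp_x$ we have $(\nabla V_k)|_x = 0$, hence $\nabla_X (JV_k)|_x = J(\nabla_X V_k)|_x = 0$; thus these terms vanish. For $V_k \in \ff_x$ we have $V_k|_x = 0$, and using \eqref{eq:Killing-curvature}, $\nabla^2_{Y,Z}V_k = R_{Y,V_k}Z$, together with $\nabla_X(JV_k) = J\nabla_X V_k$, one computes $\nabla_X(JV_k)|_x$ in terms of curvature and the value of $\nabla V_k$ at $x$. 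Concretely, since only $V_k \in \ff_x$ survive, I would parametrize: the evaluation-at-$x$-composed-with-$\nabla$ map sends $\ff_x$ onto the space of skew-symmetric (Killing-type) endomorphisms of $T_x\CC\PP^n$ commuting with $J$ (the isotropy representation), and then the sum $\sum_{V_k \in \ff_x} (\nabla_X V_k \otimes \nabla_W V_k)$ is the "identity" of this isotropy algebra paired appropriately, which by the standard symmetric-space identity (as in~\cite[p.447]{LS}, the curvature of $\CC\PP^n$ being $R_{U,V}W = \langle V,W\rangle U - \langle U,W\rangle V + \langle JV,W\rangle JU - \langle JU,W\rangle JV + 2\langle JU,V\rangle JW$ up to normalization) evaluates to $\langle R_{X,JY}JW,Z\rangle$ after using $\nabla_X JV_k = -\nabla_X V_k \cdot J$-type manipulations and the Kähler symmetries \eqref{eq:kahler-curvature}.

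**Execution order.** First, reduce the sum to $V_k \in \ff_x$ by noting $(\nabla JV_k)|_x = J(\nabla V_k)|_x = 0$ for $V_k \in \fp_x$. Second, for $V_k \in \ff_x$, use that $\nabla_X(JV_k) = J\nabla_X V_k$ and evaluate $\langle \nabla_X JV_k, Y\rangle = -\langle \nabla_X V_k, JY \rangle$; since $\nabla V_k|_x$ is a skew-symmetric endomorphism $S_k$ of $T_x\CC\PP^n$ commuting with $J$, this is $-\langle S_k X, JY\rangle$. Third, invoke the structure identity: because $\ff_x$ is exactly the isotropy subalgebra and $\fp_x \simeq T_x\CC\PP^n$ is the complement in the symmetric decomposition, the curvature operator at a symmetric space satisfies $R_{X,Y}|_{\fp} = -\operatorname{ad}_{[X,Y]}$ where $[X,Y] \in \ff_x$ and $\operatorname{ad}$ acts as $\nabla$-evaluation; combined with orthonormality this yields $\sum_k S_k X \otimes S_k W$ acting as the curvature endomorphism, giving $\sum_k \langle S_k X, JY\rangle\langle S_k W, JZ\rangle = \langle R_{X,Y}W, Z\rangle$-type expression, which after matching $J$'s is exactly $\langle R_{X,JY}JW, Z\rangle$. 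Fourth, verify the sign and the $J$-placements using the Kähler curvature symmetries \eqref{eq:kahler-curvature}.

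**Main obstacle.** The crux is the third step: correctly identifying $\sum_{V_k \in \ff_x} \nabla_X V_k \otimes \nabla_W V_k$ with the curvature tensor. This requires knowing that the map $\ff_x \to \mathfrak{so}(T_x\CC\PP^n)$, $V \mapsto (\nabla V)|_x$, is an isometric embedding onto the isotropy algebra with respect to the correctly normalized metrics — the factor of $2$ in \eqref{eq:Killing-form} and the submersion normalization $(S^{2n+1},g_0) \to (\CC\PP^n, \tfrac14 g_{FS})$ must be tracked so the resulting curvature constant comes out to exactly $1$ (no stray factor). I expect to lean on the explicit $\CC\PP^n$ curvature formula plus \eqref{eq:Killing-curvature} to pin down the constant rather than arguing abstractly; once the normalization is confirmed, the identity is a direct consequence of the symmetric-space relation $R = -[\cdot,\cdot]$ restricted to $\fp_x$, exactly as computed in~\cite[p.447]{LS}.
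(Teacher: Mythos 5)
Your (revised) plan is correct and follows essentially the same route as the paper: discard the $\fp_x$ terms using $(\nabla V_k)|_x = 0$, write $\nabla_X V_k = [X, V_k]$ at $x$ for $V_k \in \ff_x$, and convert the resulting sum into $\langle R_{X, JY}JW, Z \rangle$ through the symmetric-space structure, namely ad-invariance of $(\cdot,\cdot)_{\cK}$ as in \eqref{eq:adj-skew}, completeness of the orthonormal basis of $\ff_x$, and the identity $R_{X,Y}Z = [Z,[X,Y]]$ on $\fp_x$ coming from \eqref{eq:Killing-curvature} and the Bianchi identity. The normalization you flag as the main obstacle is already built into the setup: by \eqref{eq:identify-SUn} the evaluation map is an isometry from $(\fp_x, (\cdot,\cdot)_{\cK})$ onto $(T_x\CC\PP^n, g_{FS})$, so combining it with \eqref{eq:adj-skew} pins the constant to exactly $1$ without invoking the explicit $\CC\PP^{n}$ curvature tensor.
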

\begin{proof}
Throughout this proof we use the same notation for vectors in $T_{x}\CC\PP^{n}$ and the Killing fields in $\fp_{x}$ they induce. To begin, note that since $J$ is parallel and $(\nabla V_{k})\vert_{x} = 0$ for $k = 1, \cdots, 2n$, we have 
\begin{align}
\sum_{k = 1}^{q}\langle \nabla_{X}JV_{k}, Y \rangle \langle \nabla_{W}JV_{k}, Z \rangle  &= \sum_{k = 1}^{q}\langle \nabla_{X}V_{k}, JY \rangle \langle \nabla_{W}V_{k}, JZ \rangle \\
&= \sum_{k = 2n + 1}^{q} \langle \nabla_{X}V_{k}, JY \rangle \langle \nabla_{W}V_{k}, JZ \rangle\\
&= \sum_{k = 2n + 1}^{q} \langle [X, V_{k}], JY \rangle \langle [W, V_{k}], JZ \rangle,
\end{align}
where in the last line we used the fact that since $V_{k} \in \ff_{x}$ when $k > 2n$, we have
\[
\nabla_{\cdot} V_{k} =  [\cdot, V_{k}] \text{ at }x.
\]
To continue, note that since $X, W \in \fp_{x}$, and $V_{k} \in \ff_{x}$, by~\eqref{eq:bracket-commutator} above we have $[X, V_{k}], [W, V_{k}] \in \fp_{x}$, and hence by~\eqref{eq:identify-SUn}, we have
\begin{align}
\sum_{k = 2n + 1}^{q} \langle [X, V_{k}], JY \rangle \langle [W, V_{k}], JZ \rangle&= \sum_{k = 2n + 1}^{q} \big([X, V_{k}], \widetilde{JY})_{\cK}([W, V_{k}], \widetilde{JZ} \big)_{\cK}\\
&=  \sum_{k = 2n + 1}^{q} \big(V_{k}, [X, \widetilde{JY}] )_{\cK} (V_{k}, [W, \widetilde{JZ}] \big)_{\cK},
\end{align}
where we used~\eqref{eq:adj-skew} in getting the last line. Now since $X, W, \widetilde{JY}, \widetilde{JZ} \in \fp_x$, we have $[X, \widetilde{JY}], [W, \widetilde{JZ}] \in \ff_{x}$ and, as $\ff_{x}$ and $\fp_{x}$ are orthogonal, the last line is no other than
\begin{align*}
\big( [X, JY], [W, JZ] \big)_{\cK}&= - \big( [W, [X, \widetilde{JY}]], \widetilde{JZ}\big)_{\cK}\\
&= - \big\langle [W, [X, \widetilde{JY}]]\big|_{x}, \widetilde{JZ}\big|_{x} \big\rangle\ (\text{by }\eqref{eq:identify-SUn}, \text{ since }[W, [X, \widetilde{JY}]] \in \fp_x)\\
&= -\langle R_{X, JY}W, JZ \rangle\ (\text{since }[Z, [X, Y]] = R_{X, Y}Z \text{ for }X, Y, Z \in \fp_{x})\\
&=\langle R_{X, JY}JW, Z \rangle,
\end{align*}
where the identity mentioned in the third line follows from~\eqref{eq:Killing-curvature} and the first Bianchi identity, and  is applied to $X, \widetilde{JY}, W$ in place of $X, Y, Z$, respectively. The proof is complete.
\end{proof}
\begin{proof}[Proof of Proposition~\ref{prop:GL-CPn}]
We carry out the (rather lengthy) computation using the formula~\eqref{eq:2nd-innervar}. To simplify notation we let
\begin{align*}
Q_{1}(X) &= (\Div X)^{2} - \Ric(X, X) - \langle \nabla_{e_{i}}X, e_{j} \rangle \langle \nabla_{e_{j}}X, e_{i} \rangle + \Div \nabla_X X\\
Q_{2}(X)&= 2\langle \nabla_{\nabla u}X, \nabla u \rangle  \Div X  - R(\nabla u, X, X, \nabla u) + |\nabla_{\nabla u}X|^{2} + \langle \nabla_{\nabla u}\nabla_{X} X, \nabla u \rangle\\
Q_{3}(X) &= |\cL_{X}g\ \llcorner \nabla u |^{2}.
\end{align*}
Clearly it suffices to prove that at each point $x \in \CC\PP^{n}$, there is \textit{some} orthonormal basis $V_{1}, \cdots, V_{q}$ of $\cK$ such that 
\begin{equation}\label{eq:GL-CPn-pointwise}
\sum_{k= 1}^{q}Q_{1}(JV_{k}) = 0;\   \sum_{k = 1}^{q} \big(-Q_{2}(JV_{k}) + Q_{3}(JV_{k})\big) = 0.
\end{equation}
Thus let us fix an arbitrary $x \in \CC\PP^{n}$ and assume that $V_{1}, \cdots, V_{2n}$ is an orthonormal basis for $\fp_{x} \simeq T_{x}\CC\PP^{n}$, and $V_{2n + 1}, \cdots, V_{q}$ is an orthonormal basis for $\ff_{x}$. Moreover, we let
\[
e_{i} = V_{i}\vert_{x} \text{ for }i = 1, \cdots, 2n,
\]
which form an orthonormal basis of $T_{x}\CC\PP^{n}$.

We begin by analyzing $\sum_{k = 1}^{q}Q_{1}(JV_{k})$. First note that  
\begin{align*}
\sum_{k = 1}^{q} (\Div JV_{k})^{2} &= \sum_{k = 1}^{q}\sum_{i, j = 1}^{2n} \langle \nabla_{e_{i}}JV_{k}, e_{i} \rangle \langle \nabla_{e_{j}}JV_{k}, e_{j} \rangle.
\end{align*}
Combining this with Lemma~\ref{lemm:GL-CPn-prelim}, we get
\begin{equation}\label{eq:div-2}
\sum_{k = 1}^{q} (\Div JV_{k})^{2} = \sum_{i, j = 1}^{2n}\langle R_{e_i, Je_i}Je_j, e_j \rangle.
\end{equation}
Similarly, for the third term in the definition of $Q_{1}$ we have
\begin{equation}\label{eq:contraction-2}
\sum_{k = 1}^{q}\sum_{i, j = 1}^{2n} \langle \nabla_{e_{i}}JV_{k}, e_{j} \rangle\langle \nabla_{e_{j}}JV_{k}, e_{i} \rangle = \sum_{i, j = 1}^{2n}\langle R_{e_{i}, Je_{j}}Je_{j}, e_{i} \rangle.
\end{equation}
For the Ricci term in $Q_{1}$, we simply have
\begin{equation}
\sum_{j = 1}^{q}\Ric(JV_{j}, JV_{j})= \sum_{j = 1}^{q}\sum_{i = 1}^{2n}\langle R_{e_{i}, JV_{j}}JV_{j}, e_{i} \rangle = \sum_{i, j = 1}^{2n}\langle R_{e_{i}, Je_{j}}Je_{j}, e_{i} \rangle,
\end{equation}
where the second equality holds because $JV_{j} = 0$ at $x$ for all $j = 2n + 1, \cdots, q$.
Finally, for the last term in the definition of $Q_{1}$, we have
\begin{align*}
\sum_{k = 1}^{q}\Div \big(\nabla_{JV_{k}}JV_{k}\big) &= \sum_{k = 1}^{q}\sum_{i = 1}^{2n}\langle \nabla_{e_{i}}\nabla_{JV_{k}}JV_{k}, e_{i} \rangle\\
&= -\sum_{k = 1}^{q}\sum_{i = 1}^{2n}\langle \nabla_{e_{i}}\nabla_{JV_{k}}V_{k}, Je_{i} \rangle\\
&= -\sum_{k = 1}^{q}\sum_{i = 1}^{2n}\langle \nabla^{2}_{e_{i}, JV_{k}}V_{k}, Je_{i} \rangle - \sum_{k = 1}^{q}\sum_{i = 1}^{2n}\langle \nabla_{\nabla_{e_{i}}JV_{k}}V_{k}, Je_{i} \rangle.
\end{align*}
To continue, we apply~\eqref{eq:Killing-curvature} to the first term and transform the last line to
\begin{align*}
&-\sum_{k = 1}^{q}\sum_{i = 1}^{2n}\langle R_{e_{i}, V_{k}}JV_{k}, Je_{i} \rangle - \sum_{k = 1}^{q}\sum_{i, j = 1}^{2n}\langle \nabla_{e_{i}}JV_{k}, e_{j} \rangle \langle \nabla_{e_{j}}V_{k}, Je_{i} \rangle\\
&= -\sum_{i, j = 1}^{2n}\langle R_{e_{i}, e_{j}}e_{j}, e_{i} \rangle +\sum_{k = 1}^{q}\sum_{i, j = 1}^{2n} \langle \nabla_{e_{i}}JV_{k}, e_{j} \rangle\langle \nabla_{e_{j}}JV_{k}, e_{i} \rangle\\
&= -\sum_{i, j = 1}^{2n}\langle R_{e_{i}, e_{j}}e_{j}, e_{i} \rangle + \sum_{i, j = 1}^{2n}\langle R_{e_{i}, Je_{j}}Je_{j}, e_{i} \rangle,
\end{align*}
where we used Lemma~\ref{lemm:GL-CPn-prelim} in getting the last line. To sum up, we arrive at
\begin{equation}
\sum_{k = 1}^{q}Q_{1}(JV_{k}) = \sum_{i, j = 1}^{2n}\langle R_{e_i, Je_i}Je_j, e_j \rangle -\sum_{i, j = 1}^{2n}\langle R_{e_{i}, e_{j}}e_{j}, e_{i} \rangle - \sum_{i, j = 1}^{2n}\langle R_{e_{i}, Je_{j}}Je_{j}, e_{i} \rangle.
\end{equation}
By the symmetries of the curvature tensor and the first Bianchi identity, we can combine the first and third terms above to get
\begin{align*}
\sum_{i, j = 1}^{2n}\langle R_{e_i, Je_i}Je_j, e_j \rangle - \sum_{i, j = 1}^{2n}\langle R_{e_{i}, Je_{j}}Je_{j}, e_{i} \rangle &= \sum_{i, j = 1}^{2n}\langle R_{e_i, Je_i}Je_j, e_j \rangle + \sum_{i, j = 1}^{2n}\langle R_{Je_{j}, e_i}Je_{i}, e_{j} \rangle\\
&= -\sum_{i, j = 1}^{2n} \langle R_{Je_{i}, Je_{j}}e_{i}, e_{j}\rangle\\
&= \sum_{i, j = 1}^{2n} \langle R_{e_i, e_j}e_j, e_i \rangle.
\end{align*}
Therefore we conclude that 
\begin{equation}\label{eq:Q1-avg}
\sum_{k = 1}^{q}Q_{1}(JV_{k})  = 0.
\end{equation}

Moving on to $Q_{2}$, we have, again by Lemma~\ref{lemm:GL-CPn-prelim},
\begin{align*}
\sum_{k = 1}^{q} \langle \nabla_{\nabla u}JV_k, \nabla u \rangle \Div (JV_{k})&= \sum_{k = 1}^{q}\sum_{i = 1}^{2n} \langle \nabla_{\nabla u}JV_k, \nabla u \rangle \langle \nabla_{e_{i}}JV_{k}, e_{i} \rangle = \sum_{i = 1}^{2n}\langle R_{\nabla u, J\nabla u}Je_{i}, e_{i} \rangle.
\end{align*}
To continue, note that 
\begin{align*}
\sum_{i= 1}^{q}R(\nabla u, JV_{i}, JV_{i}, \nabla u)&= \sum_{i = 1}^{2n}\langle R_{\nabla u, Je_{i}}Je_{i}, \nabla u \rangle.
\end{align*}
For the third term in the definition of $Q_{2}$, we have
\begin{align*}
\sum_{k = 1}^{q}|\nabla_{\nabla u}JV_{k}|^{2}&= \sum_{k = 1}^{q}\sum_{i = 1}^{2n}\langle \nabla_{\nabla u}JV_{k}, e_{i} \rangle \langle \nabla_{\nabla u}JV_{k}, e_{i} \rangle\\
&= \sum_{i = 1}^{2n}\langle  R_{\nabla u, Je_{i}} J\nabla u, e_{i} \rangle,
\end{align*}
where the last line follows by Lemma~\ref{lemm:GL-CPn-prelim}. Finally, we compute
\begin{align*}
\sum_{k = 1}^{q}\langle \nabla_{\nabla u}\nabla_{JV_{k}}JV_{k}, \nabla u \rangle&= -\sum_{k = 1}^{q}\langle \nabla_{\nabla u}\nabla_{JV_{k}}V_{k}, J\nabla u \rangle\\
&= -\sum_{k = 1}^{q}\langle \nabla^{2}_{\nabla u, JV_{k}}V_{k}, J\nabla u \rangle - \sum_{k = 1}^{q}\langle \nabla_{\nabla_{\nabla u}JV_{k}}V_{k}, J\nabla u \rangle\\
&= -\sum_{k = 1}^{q}\langle R_{\nabla u, V_{k}}JV_{k}, J \nabla u \rangle - \sum_{k = 1}^{q}\sum_{i = 1}^{2n}\langle \nabla_{\nabla u}JV_{k}, e_{i} \rangle \langle \nabla_{e_{i}}V_{k}, J\nabla u \rangle\\
& = -\sum_{k  =1}^{q}\langle R_{\nabla u, e_{k}}e_{k}, \nabla u \rangle + \sum_{k = 1}^{q}\sum_{i = 1}^{2n}\langle \nabla_{\nabla u}JV_{k}, e_{i} \rangle \langle \nabla_{e_{i}}JV_{k}, \nabla u \rangle\\
& = -\sum_{i  =1}^{2n}\langle R_{\nabla u, e_{i}}e_{i}, \nabla u \rangle + \sum_{i = 1}^{2n}\langle R_{\nabla u, Je_{i}}Je_{i}, \nabla u \rangle,
\end{align*}
where in getting from the second line to the third,~\eqref{eq:Killing-curvature} is again used. To sum up, we get
\begin{align}
\nonumber \sum_{k = 1}^{q}Q_{2}(JV_{k}) =& 2\sum_{i = 1}^{2n}\langle R_{\nabla u, J\nabla u}Je_{i}, e_{i} \rangle- \sum_{i = 1}^{2n}\langle R_{\nabla u, Je_{i}}Je_{i}, \nabla u \rangle\\
\nonumber & + \sum_{i = 1}^{2n}\langle  R_{\nabla u, Je_{i}} J\nabla u, e_{i} \rangle -\sum_{i  =1}^{2n}\langle R_{\nabla u, e_{i}}e_{i}, \nabla u \rangle + \sum_{i = 1}^{2n}\langle R_{\nabla u, Je_{i}}Je_{i}, \nabla u \rangle\\
\label{eq:Q2-avg} =& 2\sum_{i = 1}^{2n}\langle R_{\nabla u, J\nabla u}Je_{i}, e_{i} \rangle + \sum_{i = 1}^{2n}\langle  R_{\nabla u, Je_{i}} J\nabla u, e_{i} \rangle -\sum_{i  =1}^{2n}\langle R_{\nabla u, e_{i}}e_{i}, \nabla u \rangle.
\end{align}

For $Q_{3}$, recalling that $JV_{k}$ are gradient vector fields, we have
\[
(\cL_{JV_{k}}g)_{\nabla u, e_{i}} = 2\langle \nabla_{e_{i}}JV_{k}, \nabla u \rangle = 2\langle \nabla_{\nabla u}JV_{k}, e_{i} \rangle.
\]
Thus, we compute
\begin{align*}
\sum_{k = 1}^{q}Q_{3}(JV_{k}) &= \sum_{k = 1}^{q}\sum_{i = 1}^{2n} |(\cL_{JV_{k}}g)_{\nabla u, e_{i}}|^{2}\\
&=4\sum_{k = 1}^{q}\sum_{i = 1}^{2n} \langle \nabla_{e_{i}}JV_{k}, \nabla u \rangle \langle \nabla_{\nabla u}JV_{k}, e_{i} \rangle\\
&= 4\sum_{i = 1}^{2n}\langle R_{e_{i}, J\nabla u}J\nabla u, e_{i} \rangle\\
&= -4\sum_{i = 1}^{2n}\langle R_{Je_{i}, \nabla u}J\nabla u, e_{i} \rangle.
\end{align*}
Combining this with~\eqref{eq:Q2-avg}, we get
\begin{align}
\nonumber \sum_{k = 1}^{q}\big(-Q_{2}(JV_{k}) + Q_{3}(JV_{k})\big)=& -2\sum_{i = 1}^{2n}\langle R_{\nabla u, J\nabla u}Je_{i}, e_{i} \rangle - \sum_{i = 1}^{2n}\langle  R_{\nabla u, Je_{i}} J\nabla u, e_{i} \rangle +\sum_{i  =1}^{2n}\langle R_{\nabla u, e_{i}}e_{i}, \nabla u \rangle\\
\nonumber &-4\sum_{i = 1}^{2n}\langle R_{Je_{i}, \nabla u}J\nabla u, e_{i} \rangle\\
\nonumber =& -2\sum_{i = 1}^{2n}\langle R_{\nabla u, J\nabla u}Je_{i}, e_{i} \rangle + 3\sum_{i = 1}^{2n}\langle  R_{\nabla u, Je_{i}} J\nabla u, e_{i} \rangle +\sum_{i  =1}^{2n}\langle R_{\nabla u, e_{i}}e_{i}, \nabla u \rangle\\
\nonumber =& -2\sum_{i = 1}^{2n}\langle R_{\nabla u, J\nabla u}Je_{i}, e_{i} \rangle -2 \sum_{i = 1}^{2n}\langle  R_{Je_{i}, \nabla u} J\nabla u, e_{i} \rangle\\
\label{eq:Q2-Q3-sum}& +\sum_{i = 1}^{2n}\langle  R_{\nabla u, Je_{i}} J\nabla u, e_{i} \rangle+\sum_{i  =1}^{2n}\langle R_{\nabla u, e_{i}}e_{i}, \nabla u \rangle.
\end{align}
By the Bianchi identity, we have
\begin{align*}
& -2\sum_{i = 1}^{2n}\langle R_{\nabla u, J\nabla u}Je_{i}, e_{i} \rangle -2 \sum_{i = 1}^{2n}\langle  R_{ Je_{i}, \nabla u} J\nabla u, e_{i} \rangle\\ 
=&\ 2\sum_{i = 1}^{2n}\langle R_{J\nabla u, Je_{i}}\nabla u, e_{i} \rangle = -2\sum_{i = 1}^{2n}\langle R_{\nabla u, e_{i}}e_{i}, \nabla u \rangle.
\end{align*}
Putting this back to~\eqref{eq:Q2-Q3-sum}, we get
\begin{align*}
 \sum_{k = 1}^{q}\big(-Q_{2}(JV_{k}) + Q_{3}(JV_{k})\big)&= \sum_{i = 1}^{2n}\big( \langle R_{\nabla u, Je_{i}}J\nabla u ,e_{i} \rangle - \langle R_{\nabla u, e_{i}}e_{i}, \nabla u \rangle\big)\\
 &= \sum_{i = 1}^{2n}\big( \langle R_{\nabla u, Je_{i}}Je_{i} ,\nabla u \rangle - \langle R_{\nabla u, e_{i}}e_{i}, \nabla u \rangle\big)\\
 & = \Ric(\nabla u, \nabla u) - \Ric(\nabla u, \nabla u) = 0.
\end{align*}
Hence, recalling~\eqref{eq:Q1-avg}, we see that~\eqref{eq:GL-CPn-pointwise} is proved, and we are done.
\end{proof}
We can now imitate the argument used to handle the $n = 2$ case in the proof of Theorem~\ref{thm:GL-no-stable} to establish our main result about solutions on $\CC\PP^{n}$.
\vskip 1mm
\noindent\textbf{Theorem~\ref{thm:GL-no-stable-CPn}.}
\emph{For $n \geq 1$, every stable solutions to~\eqref{eq:GL-eq} on $\CC\PP^{n}$ is necessarily constant with absolute value $1$, regardless of the value of $\ep$.}
\begin{proof}
Suppose $u:\CC\PP^{n} \to \CC$ is a stable solution to~\eqref{eq:GL-eq}. As in the proof of Theorem~\ref{thm:GL-no-stable}, we only need to prove that $u$ is constant. By stability and Corollary~\ref{coro:inner-outer-coro}, we have
\[
\delta^{2}E_{\ep}(u)(\nabla_{JV}u, \nabla_{JV}u) = \delta^2 E_{\ep}(u)(JV, JV) \geq 0 \text{ for all }V \in \cK.
\]
Combining this with Proposition~\ref{prop:GL-CPn}, we see that 
\[
\delta^{2}E_{\ep}(u)(\nabla_{JV}u, \nabla_{JV}u) = \delta^{2}E_{\ep}(u)(JV, JV) = 0 \text{ for all }V \in \cK.
\]
Consequently, for all $V \in \cK$, the function $v := \nabla_{JV}u$ lies in the kernel of the bilinear form $\delta^{2}E_{\ep}(u)$. That is,
\begin{equation}\label{eq:GL-CPn-Jacobi}
\Delta v = \frac{|u|^{2} - 1}{\ep^{2}}v + \frac{2(u \cdot v)u}{\ep^{2}}. 
\end{equation}
On the other hand, recalling that $JV = \nabla f$ for some eigenfunction $\Delta f = -(n + 1)f$, and that $\Ric_{g} = \frac{n + 1}{2}g$, we compute $\Delta v$ directly:
\begin{align*}
\Delta v &= \Delta \langle \nabla u, \nabla f \rangle = \langle \Delta \nabla u, \nabla f \rangle + 2\langle \nabla^{2}f, \nabla^{2}u \rangle + \langle \nabla u, \Delta \nabla f \rangle\\
&= \langle \nabla \Delta u , \nabla f \rangle + \Ric(\nabla u, \nabla f) + 2\langle \nabla^{2}f, \nabla^{2}u \rangle + \langle \nabla u, \nabla \Delta f \rangle + \Ric(\nabla u, \nabla f)\\
&= \frac{|u|^{2} - 1}{\ep^{2}}v + \frac{2(u \cdot v)u}{\ep^{2}} + 2\Ric(\nabla u, \nabla f) - (n + 1)\langle \nabla u, \nabla f \rangle + 2\langle \nabla^{2}f, \nabla^{2}u \rangle\\
&= \frac{|u|^{2} - 1}{\ep^{2}}v + \frac{2(u \cdot v)u}{\ep^{2}} + 2\langle \nabla^{2}f, \nabla^{2}u \rangle.
\end{align*}
Combining this with~\eqref{eq:GL-CPn-Jacobi} and recalling that $V$ is any Killing vector field, we see that 
\begin{equation}\label{eq:Hessian-relation}
\langle \nabla^{2}f, \nabla^{2}u \rangle\vert_{x} = 0 \text{ for all }x \in \CC\PP^{n} \text{ and }f \text{ such that }\Delta f = -(n + 1)f.
\end{equation}
We claim that~\eqref{eq:Hessian-relation} implies that $\Delta u\vert_{x} = 0$ for all $x$ on $\CC\PP^{n}$. By homogeneity, it suffices to prove this at $x  = [1,0, \cdots,0]$. For this we introduce local coordinates $(z_1, \cdots, z_n) \mapsto [\frac{1}{\sqrt{(1 + |z|^{2})}},\frac{z_1}{\sqrt{(1 + |z|^{2})}},\cdots,\frac{z_n}{{\sqrt{(1 + |z|^{2})}}}]$ and write
\[
X_j = \paop{x^j},\ Y_j = \paop{y^{j}}
\]
\[
Z_j = \frac{1}{2}(X_j - \sqrt{-1}Y_j),\ \overline{Z}_j = \frac{1}{2}(X_j + \sqrt{-1}Y_j).
\]
In~\eqref{eq:CPn-eigenfunction}, we choose
\begin{equation}
w = \left(
\begin{array}{cc}
1 & 0\\
0 & -\frac{\delta_{ij}}{n}
\end{array}
\right).
\end{equation}
Writing $f$ for $f_w$ defined as in~\eqref{eq:CPn-eigenfunction}, in terms of coordinates we have
\[
f_{w}(z_{1}, \cdots, z_n) = \frac{1 - \frac{|z|^{2}}{n}}{1 + |z|^{2}}.
\]
By a direct computation, at the point $z = 0$ we have
\begin{align}\label{eq:2,0-derivative}
\nabla^{2}_{Z_k, Z_l}f = \frac{\partial^2 f}{\partial z^k \partial z^l} = 0 \Longrightarrow \nabla^{2}_{X_k, X_l}f = \nabla^{2}_{Y_k, Y_l}f \text{ and }\nabla^{2}_{X_k, Y_l}f = -\nabla^{2}_{Y_k, X_l}f.
\end{align}
Moreover, at $z = 0$ we also have
\begin{align*}
&\nabla^2_{Z_k, \overline{Z}_l}f = \frac{\partial^2 f}{\partial z^k \partial \overline{z}^l} = -\frac{n+1}{n}\delta_{kl}.
\end{align*}
Expressing $Z_{k}, \overline{Z}_l$ in terms of $X$ and $Y$, and combining with~\eqref{eq:2,0-derivative}, we see that 
\begin{align*}
\nabla^{2}_{Y_k, Y_l}f &= \nabla^{2}_{X_k, X_l}f = -2\frac{n+1}{n}\delta_{kl},\\
\nabla^{2}_{X_k, Y_l}f &= -\nabla^{2}_{Y_k, X_l}f = 0.
\end{align*}
Moreover, by our normalization of the Fubini--Study metric (see~\eqref{eq:FS-local} in particular), at $z = 0$ we have
\[
|X_{k}| = |Y_{k}| = 2 \text{ and }\langle X_{k}, Y_{l} \rangle = 0, \text{ for all }k, l.
\]
Therefore at $z = 0$, which corresponds to $x = [1, 0, \cdots, 0]$ on $\CC\PP^{n}$, we simply have
\[
\nabla^{2}f = -\frac{n + 1}{2n}g,
\]
in which case the vanishing condition~\eqref{eq:Hessian-relation} implies $(\Delta u)\vert_{x} = 0$. Repeating a similar argument at other points, we conclude that $\Delta u $ vanishes identically on $\CC\PP^{n}$, but then $u$ is necessarily constant, and the proof is complete.
\end{proof}
\section{First and second variations of $F_{\ep}$}
\subsection{Preliminaries}
In this section we switch gears and consider critical points of $F_{\ep}$. Let $L$ be a complex line bundle over a closed Riemannian $n$-manifold $(M, g)$, and suppose $L$ is equipped with a Hermitian metric $\langle\cdot, \cdot\rangle_{L}$. Below, unless otherwise stated, all the connections on $L$ we consider are metric connections. 

Fixing a smooth connection $D_{0}$ on $L$, any other connection $D$ can be written as 
\[D = D_0 - \sqrt{-1}a\]
for some real $1$-form $a$, and we say that $D$ is a $W^{1, 2}$-connection if the $1$-form $a$ is of class $W^{1, 2}$. We use $\mathcal{C}$ to denote the set of pairs $(u, D)$ where $u$ is a section of class $W^{1, 2} \cap L^{\infty}$ and $D$ is a $W^{1, 2}$-connection. Given $(u, D), (\widetilde{u}, \widetilde{D}) \in \mathcal{C}$, we say that they are gauge equivalent if there exists $\theta \in W^{2, 2}(M; \RR)$ such that
\begin{equation}\label{eq:gauge-eq-defi}
(\widetilde{u}, \widetilde{D}) = (e^{\sqrt{-1}\theta}u, D - \sqrt{-1}d\theta) \text{ on }M, 
\end{equation}
in which case we have $F_{\ep}(u, D) = F_{\ep}(\widetilde{u}, \widetilde{D})$, and that
\begin{equation}\label{eq:YMH-gauge-rel}
F_{\widetilde{D}} = F_{D},\ \widetilde{D}\widetilde{u} = e^{\sqrt{-1}\theta}Du.
\end{equation}
Of course the notion of gauge equivalence can be localized to any subset $\Omega \subset M$ by requiring instead that $\theta \in W^{2, 2}(\Omega; \RR)$ and that~\eqref{eq:gauge-eq-defi} holds on $\Omega$.

Suppose $(u, D) \in \mathcal{C}$. For a smooth section $v$ and a smooth $1$-form $a$, we define the first and second outer variations of $F_{\ep}$ at $(u, D)$ along $(v, a)$ to be
\begin{align}\label{eq:YMH-1st-outer}
&\nonumber\delta F_{\ep}(u, D)((v, a)) = \frac{d}{dt}F_{\ep}(u + tv, D - t\sqrt{-1}a)\big|_{t=0}\\ 
=& \int_{M} 2\ep^{2}\langle F_{D}, da \rangle + 2\re\langle Du, Dv - \sqrt{-1}au \rangle+ \frac{|u|^{2} - 1}{\ep^{2}}\re \langle u, v\rangle_{L}d\mu_g.
\end{align}
\begin{align}
\label{eq:YMH-2nd-outer}&\delta^{2} F_{\ep}(u, D)((v, a))=  \frac{d^2}{dt^2}F_{\ep}(u + tv, D - t\sqrt{-1}a)\big|_{t=0}=\\
\nonumber=& \int_{M} 2\ep^{2}|da|^{2} + 2|Dv - \sqrt{-1}au|^{2} -4\re\langle \sqrt{-1}av, Du \rangle + \frac{|u|^{2} - 1}{\ep^{2}}|v|^{2} + \frac{2\big(\re\langle u, v\rangle_{L}\big)^{2}}{\ep^{2}}d\mu_g.
\end{align}
Thus $(u, D)$ is a weak solution to~\eqref{eq:YMH-eq} if and only if $\delta F_{\ep}(u, D) (v, a)= 0$ for all smooth variations $(v, a)$. We say that a weak solution in $\cC$ to~\eqref{eq:YMH-eq} is stable if 
\begin{equation}\label{eq:YMH-outer-stable}
\delta^{2} F_{\ep}(u, D)((v, a)) \geq 0 \text{ for all smooth variations }(v, a).
\end{equation}

As in Section 2, we want to define variations of $F_{\ep}$ with respect to deformations of the domain. However, the computations require some regularity to go through, while a weak solution may not be globally gauge equivalent to a smooth one. Hence, we work on contractible domains $\Omega \subset M$ with sufficiently smooth boundary. In Section 6 we patch the local computations together in the case $M = S^{n}$.

Since $\Omega$ is contractible, we can fix a unitary trivialization $L\vert_{\Omega} \simeq \Omega \times \CC$, so that sections of $L\vert_{\Omega}$ are identified with complex-valued functions, their covariant derivatives with complex-valued $1$-forms, and the bundle metric simply becomes $\langle u, v \rangle_{L} = u\bar{v}$, so that 
\[
\re \langle u, v\rangle_{L} = u \cdot v,
\]
where $\cdot$ denotes the inner product on $\CC \simeq \RR^{2}$. Moreover, we can express a connection $D$ as $d - \sqrt{-1}A$ with $A$ being a real $1$-form on $\Omega$, in which case $F_{D} = dA$.

Now suppose $(u, D) \in \mathcal{C}$ is a weak solution to~\eqref{eq:YMH-eq} on $M$. Restricting to $\Omega$, we may write $D = d - \sqrt{-1}A$ as above. By the arguments in~\cite[Appendix A]{PiSt}, if we let $\theta \in W^{2, 2}(\Omega; \RR)$ be the unique solution to the following Neumann problem which integrates to zero on $\Omega$:
\begin{equation}\label{eq:gauge-fixing}
\left\{
\begin{array}{cl}
\Delta \theta &= d^{\ast}A \text{ in }\Omega,\\
\partial_{\nu}\theta & = - A_{\nu}  \text{ on }\partial\Omega,
\end{array}
\right.
\end{equation}
and define
\begin{equation}\label{eq:smooth-rep}
(\widetilde{u}, \widetilde{D}) = (e^{\sqrt{-1}\theta}u, D - \sqrt{-1}d\theta),
\end{equation}
then $(\widetilde{u}, \widetilde{D})$ is a smooth solution to~\eqref{eq:YMH-eq} on $\Omega$. Consequently, since $F_{D} = F_{\widetilde{D}}$ and $\Omega \subset M$ is any contractible domain, the form $F_D$ is in fact smooth on all of $M$. Similarly, $|D u|^{2}, |u|^{2}$ and $|D_{X}u|^2$ for any smooth vector field are all smooth over $M$. Note that the maximum principle applied to $|u|^2 - 1$ implies that $|u| \leq 1$ on $M$.

In principle, any gauge invariant expression manufactured out of $(u, D)$ patches together to define smooth objects on all of $M$. For later use, we note the following two examples.
\begin{lemm}\label{lemm:patching}
Let $(u, D) \in \cC$ be a weak solution to~\eqref{eq:YMH-eq} and let $X$ be a vector field on $M$. Then we have 
\begin{enumerate}
\item[(a)] For all $k \geq 1$ there exists a smooth, complex-valued $1$-form $\alpha$ on $M$ that restricts to 
\[
\langle \widetilde{D}\widetilde{u}, (\widetilde{D}_X)^k \widetilde{u} \rangle
\]
on $\Omega$ whenever the latter is a contractible domain in $M$ and $(\widetilde{u}, \widetilde{D})$ is smooth and gauge equivalent to $(u, D)$ on $\Omega$.
\item[(b)] There exists a smooth function $\Phi_X$ on $M$ that restricts to
\[
|\widetilde{D}\widetilde{D}_X \widetilde{u} - \sqrt{-1}(\iota_X F)\widetilde{u}|^{2}
\]
on $\Omega$ whenever $\Omega$ and $(\widetilde{u}, \widetilde{D})$ are as in part (a).
\end{enumerate}
\end{lemm}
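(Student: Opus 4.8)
\medskip
\noindent\emph{Proof strategy.} The plan is to build $\alpha$ and $\Phi_X$ by patching: cover $M$ by contractible domains on which $(u,D)$ has a smooth gauge representative, define the objects locally using that representative, and check that the local definitions agree on overlaps because the relevant expressions are gauge invariant. First I would invoke the construction recalled just before the lemma: every $p \in M$ lies in a small geodesic ball $\Omega$, which is contractible, so after choosing a unitary trivialization, writing $D = d - \sqrt{-1}A$ there, solving the Neumann problem~\eqref{eq:gauge-fixing}, and setting $(\widetilde u, \widetilde D)$ as in~\eqref{eq:smooth-rep}, one obtains a smooth solution $(\widetilde u, \widetilde D)$ of~\eqref{eq:YMH-eq} on $\Omega$ gauge equivalent to $(u,D)$. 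Fixing such a cover $\{\Omega_\lambda\}$ with smooth representatives $(\widetilde u_\lambda, \widetilde D_\lambda)$, I would set $\alpha_\lambda := \langle \widetilde D_\lambda \widetilde u_\lambda,\, (\widetilde D_{\lambda,X})^k \widetilde u_\lambda \rangle$ on $\Omega_\lambda$ for part (a) and $\Phi_{X,\lambda} := |\widetilde D_\lambda \widetilde D_{\lambda,X}\widetilde u_\lambda - \sqrt{-1}(\iota_X F)\widetilde u_\lambda|^2$ for part (b); each is smooth on $\Omega_\lambda$ since $(\widetilde u_\lambda, \widetilde D_\lambda)$ is smooth and, as already noted in the text, $F = F_{\widetilde D_\lambda}$ is a smooth $2$-form on all of $M$.

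The key point is the transformation rule under a change of gauge. If $D' = D - \sqrt{-1}d\sigma$, then the Leibniz rule gives $D'(e^{\sqrt{-1}\sigma}s) = e^{\sqrt{-1}\sigma}Ds$ for every section $s$, and likewise $D'_X(e^{\sqrt{-1}\sigma}s) = e^{\sqrt{-1}\sigma}D_X s$ for every vector field $X$; iterating the latter yields $(D'_X)^k(e^{\sqrt{-1}\sigma}s) = e^{\sqrt{-1}\sigma}(D_X)^k s$. On an overlap $\Omega_\lambda \cap \Omega_\mu$ the two representatives are gauge equivalent to each other: writing $\widetilde u_\lambda = e^{\sqrt{-1}\theta_\lambda}u$, $\widetilde D_\lambda = D - \sqrt{-1}d\theta_\lambda$ and likewise for $\mu$, one gets $\widetilde u_\mu = e^{\sqrt{-1}\sigma}\widetilde u_\lambda$, $\widetilde D_\mu = \widetilde D_\lambda - \sqrt{-1}d\sigma$ with $\sigma = \theta_\mu - \theta_\lambda \in W^{2,2}$, and since $-\sqrt{-1}d\sigma = \widetilde D_\mu - \widetilde D_\lambda$ is the difference of the two smooth connections, $d\sigma$ is smooth and hence so is $\sigma$ by elliptic regularity. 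Substituting $s = \widetilde u_\lambda$ into the identities above gives $\widetilde D_\mu \widetilde u_\mu = e^{\sqrt{-1}\sigma}\widetilde D_\lambda \widetilde u_\lambda$ and $(\widetilde D_{\mu,X})^k\widetilde u_\mu = e^{\sqrt{-1}\sigma}(\widetilde D_{\lambda,X})^k\widetilde u_\lambda$, and combining with $F_{\widetilde D_\mu} = F_{\widetilde D_\lambda} = F$ also $\widetilde D_\mu \widetilde D_{\mu,X}\widetilde u_\mu - \sqrt{-1}(\iota_X F)\widetilde u_\mu = e^{\sqrt{-1}\sigma}\big(\widetilde D_\lambda \widetilde D_{\lambda,X}\widetilde u_\lambda - \sqrt{-1}(\iota_X F)\widetilde u_\lambda\big)$. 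Because $|e^{\sqrt{-1}\sigma}| = 1$, this factor cancels out of the Hermitian pairing and out of the squared norm, so $\alpha_\lambda = \alpha_\mu$ and $\Phi_{X,\lambda} = \Phi_{X,\mu}$ on $\Omega_\lambda \cap \Omega_\mu$. Hence the $\alpha_\lambda$ glue to a smooth complex $1$-form $\alpha$ and the $\Phi_{X,\lambda}$ to a smooth function $\Phi_X$ on $M$; and running the same overlap computation with an arbitrary contractible $\Omega$ and smooth gauge representative $(\widetilde u, \widetilde D)$ of $(u,D)$ in place of some $\Omega_\mu$ shows $\alpha|_\Omega = \langle \widetilde D\widetilde u, (\widetilde D_X)^k\widetilde u\rangle$ and $\Phi_X|_\Omega = |\widetilde D\widetilde D_X\widetilde u - \sqrt{-1}(\iota_X F)\widetilde u|^2$, which is the asserted restriction property.

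The only step that is not pure bookkeeping is the regularity of the transition functions $\sigma$, and even that is short once one observes that $d\sigma$ is a difference of smooth connection $1$-forms; apart from this, the proof amounts to verifying the elementary intertwining identities $D'(e^{\sqrt{-1}\sigma}s) = e^{\sqrt{-1}\sigma}Ds$ and their iterates and then using $|e^{\sqrt{-1}\sigma}| = 1$. So I do not anticipate a genuine obstacle here: the lemma is essentially the statement that gauge-invariant quantities are well defined globally, made precise for the specific expressions needed later.
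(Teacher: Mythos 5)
Your proposal is correct and follows essentially the same route as the paper: one checks that under a gauge change $(\widehat u,\widehat D)=(e^{\sqrt{-1}\theta}\widetilde u,\widetilde D-\sqrt{-1}d\theta)$ the quantities $\widetilde D\widetilde u$ and $(\widetilde D_X)^k\widetilde u$ pick up the common unimodular factor $e^{\sqrt{-1}\theta}$ (by iterating $\widehat D_X\widehat u=e^{\sqrt{-1}\theta}\widetilde D_X\widetilde u$), which cancels in the Hermitian pairing and in the squared norm, so the local expressions are independent of the smooth representative and patch globally. Your extra remarks (explicit cover by geodesic balls, smoothness of the transition function $\sigma$) are correct but not needed — the paper's argument works directly with $\theta\in W^{2,2}$ since only $|e^{\sqrt{-1}\theta}|=1$ is used.
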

\begin{proof}
We only prove part (a) as (b) is similar. It suffices to show that if $\Omega$ is as in the statement then
\[
\alpha_{\Omega}:= \langle \widetilde{D}\widetilde{u}, (\widetilde{D}_X)^{k}\widetilde{u} \rangle
\]
is independent of the choice of $(\widetilde{u}, \widetilde{D})$. Indeed, if $(\widehat{u}, \widehat{D})$ is also smooth and gauge equivalent to $(u, D)$ on $\Omega$, then there exists $\theta \in W^{2, 2}(\Omega; \RR)$ such that 
\[
(\widehat{u}, \widehat{D}) = (e^{\sqrt{-1}\theta}\widetilde{u}, \widetilde{D} - \sqrt{-1}d\theta).
\]
Now recall by~\eqref{eq:YMH-gauge-rel} that 
\[
\widehat{D}_X \widehat{u} = e^{\sqrt{-1}\theta}\widetilde{D}_X\widetilde{u}.
\]
Applying $\widehat{D}_X$ to both sides, we get
\[
(\widehat{D}_{X})^2\widehat{u} = \widehat{D}_{X}\big(e^{\sqrt{-1}\theta}\widetilde{D}_X\widetilde{u} \big) = e^{\sqrt{-1}\theta}(\widetilde{D}_X)^2\widetilde{u}.
\]
Inductively, we obtain $(\widehat{D}_{X})^k\widehat{u} = e^{\sqrt{-1}\theta}(\widetilde{D}_X)^k\widetilde{u}$. Therefore, since $\langle \cdot, \cdot \rangle_{L}$ is Hermitian, we have
\[
\langle \widehat{D}\widehat{u}, (\widehat{D}_X)^{k}\widehat{u} \rangle = \big\langle e^{\sqrt{-1}\theta}\widetilde{D}\widetilde{u}, e^{\sqrt{-1}\theta}(\widetilde{D}_X)^{k}\widetilde{u} \big\rangle = \langle \widetilde{D}\widetilde{u}, (\widetilde{D}_X)^{k}\widetilde{u} \rangle,
\]
as asserted.
\end{proof}

\subsection{First and second inner variations of $F_{\ep}$}
Let $\Omega \subset M$ be a contractible domain, and let $\Omega_1$ be an open subset strictly contained in $\Omega$, with $\partial \Omega_1$ smooth. Suppose $(u, D = d - \sqrt{-1}A)$ is a smooth solution to~\eqref{eq:YMH-eq} on $\Omega$, and that $X$ is a vector field on $M$. Then the following integral is smooth in $t$:
\begin{align}\label{eq:YMH-inner-integral}
\nonumber F_{\ep}(\varphi_{t}^{\ast}u, d -\sqrt{-1}\varphi_{t}^{\ast}A;\Omega_1) &= \int_{\Omega_1}e_{\ep}(\varphi_{t}^{\ast}u, d -\sqrt{-1}\varphi_{t}^{\ast}A)d\mu_g,\\
&= \int_{\Omega_1} \ep^2|\varphi_{t}^{\ast}(dA)| + |\varphi_{t}^{\ast}(Du)|^2 + \frac{(1 - |\varphi_{t}^{\ast}u|^2)^2}{4\ep^2}d\mu_g. 
\end{align}
We define the first and second inner variations to be
\begin{equation}\label{eq:YMH-1st-inner}
\delta F_{\ep}(u, D; \Omega_1)(X) = \frac{d}{dt}F_{\ep}(\varphi_{t}^{\ast}u, d -\sqrt{-1}\varphi_{t}^{\ast}A; \Omega_1)\big\vert_{t =0}.
\end{equation}
\begin{equation}\label{eq:YMH-2nd-inner}
\delta^{2} F_{\ep}(u, D; \Omega_1)(X, X) = \frac{d^2}{dt^2}F_{\ep}(\varphi_{t}^{\ast}u, d -\sqrt{-1}\varphi_{t}^{\ast}A; \Omega_1)\big\vert_{t =0}.
\end{equation}
As in Section 2, we can relate the inner and outer variations as follows.
\begin{prop}\label{prop:YMH-inner-outer}
With $(u, D)$ and $\Omega_1$, $X$ as above, and writing $D = d - \sqrt{-1}A$, $F = dA$, we have 
\begin{enumerate}
\item[(a)]
\begin{equation}\label{eq:YMH-1st-inner-outer} 
\delta F_{\ep}(u, D)(X) = 2\int_{\partial\Omega_1} \langle \iota_{\nu}F_D, \iota_X F_D \rangle d\sigma_g + 2\re\int_{\partial\Omega_1}\langle D_{\nu}u, D_X u \rangle d\sigma_g,
\end{equation}
where $\nu$ denotes the outward unit normal to $\partial\Omega_1$.
\item[(b)]
\begin{align}
\nonumber& \delta^2 F_{\ep}(u, D)(X ,X)\\
\nonumber = &\int_{\Omega_1}2\ep^{2}|d(\iota_X F_D)|^{2} + 2|D D_X u - \sqrt{-1}(\iota_X F_D)u|^{2} - 4\re\langle \sqrt{-1}(\iota_X F_D)D_X u , Du \rangle\\
\nonumber& \frac{|u|^{2} - 1}{\ep^{2}}|D_X u|^{2} + \frac{2\big(\re\langle u, D_X u \rangle \big)^2}{\ep^2}d\mu_g\\
\label{eq:YMH-2nd-inner-outer} &+ 2\int_{\partial\Omega_1} \langle \iota_{\nu}F_D, \iota_X d(\iota_X F_D) \rangle d\sigma_g + 2\re\int_{\partial\Omega_1}\langle D_{\nu}u, D_X D_X u \rangle d\sigma_g.
\end{align}
\end{enumerate}
\end{prop}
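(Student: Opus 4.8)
The plan is to follow the pattern of Proposition~\ref{prop:inner-outer}: realize the inner variation as an outer variation along a conveniently chosen curve in the space of pairs, and then integrate by parts using the Euler--Lagrange equations~\eqref{eq:YMH-eq} to reduce everything to integrals over $\partial\Omega_1$. Fix the unitary trivialization of $L|_\Omega$, so that $u$ is a complex-valued function, $D = d - \sqrt{-1}A$ with $A$ a real $1$-form, and $F_D = dA$. Then the integral~\eqref{eq:YMH-inner-integral} equals $\mathcal G(\varphi_t^{\ast} u, \varphi_t^{\ast} A)$, where $\mathcal G(w, B) := \int_{\Omega_1} \ep^2|dB|^2 + |(d - \sqrt{-1}B)w|^2 + \frac{(1-|w|^2)^2}{4\ep^2}\, d\mu_g$ is the localized version of the outer functional; its first and second derivatives at $(u, A)$ along a direction $(v, a)$ are the $\Omega_1$-integrals recorded in~\eqref{eq:YMH-1st-outer} and~\eqref{eq:YMH-2nd-outer}. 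Differentiating $t \mapsto \mathcal G(\varphi_t^{\ast} u, \varphi_t^{\ast} A)$ at $t = 0$ by the chain rule for first and second derivatives then expresses $\delta F_\ep(u, D; \Omega_1)(X)$ and $\delta^2 F_\ep(u, D; \Omega_1)(X, X)$ through these outer variations, evaluated at the velocity $(\mathcal L_X u, \mathcal L_X A)$ and, for the second variation, with an extra first-order term at the acceleration $(\mathcal L_X^2 u, \mathcal L_X^2 A)$.

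The key step is to replace the curve $t \mapsto (\varphi_t^{\ast} u, \varphi_t^{\ast} A)$ by a gauge-equivalent curve whose jets are gauge-covariant. Set $\theta_t := -t\, \iota_X A - \tfrac{t^2}{2}\, X(\iota_X A)$ and $(\hat u_t, \hat A_t) := \big( e^{\sqrt{-1}\theta_t}\varphi_t^{\ast} u,\ \varphi_t^{\ast} A + d\theta_t \big)$. Since $(u, D)$ is smooth on $\Omega$, each $\theta_t$ is smooth, so gauge invariance of $F_\ep$ in the form~\eqref{eq:gauge-eq-defi} gives $\mathcal G(\hat u_t, \hat A_t) = \mathcal G(\varphi_t^{\ast} u, \varphi_t^{\ast} A)$ for all small $t$. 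A direct computation, using Cartan's formula $\mathcal L_X A = d(\iota_X A) + \iota_X F_D$, the identity $\mathcal L_X F_D = d(\iota_X F_D)$, and the explicit $\theta_t$, shows that $(\hat u_0, \hat A_0) = (u, A)$ and
\[
\tfrac{d}{dt}\hat u_t\big|_{0} = D_X u, \qquad \tfrac{d}{dt}\hat A_t\big|_{0} = \iota_X F_D, \qquad \tfrac{d^2}{dt^2}\hat u_t\big|_{0} = D_X D_X u, \qquad \tfrac{d^2}{dt^2}\hat A_t\big|_{0} = \iota_X d(\iota_X F_D),
\]
where $D_X D_X u := \big( D(D_X u) \big)(X)$. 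Hence, by the chain rule, $\delta F_\ep(u, D; \Omega_1)(X)$ equals the first outer variation~\eqref{eq:YMH-1st-outer} (integrated over $\Omega_1$) at $(v, a) = (D_X u, \iota_X F_D)$, and $\delta^2 F_\ep(u, D; \Omega_1)(X, X)$ equals the second outer variation~\eqref{eq:YMH-2nd-outer} at this same $(v, a)$ plus the first outer variation~\eqref{eq:YMH-1st-outer} at $(v, a) = \big( D_X D_X u,\ \iota_X d(\iota_X F_D) \big)$.

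It remains to integrate by parts. In the first outer variation, moving $d$ off of $a$ and $D$ off of the $Dv$ term yields boundary integrals over $\partial\Omega_1$ together with bulk terms involving $\ep^2 d^{\ast} F_D$ and $\ep^2 D^{\ast} D u$; substituting the Euler--Lagrange equations~\eqref{eq:YMH-eq}, the former cancels $-2\re\langle Du, \sqrt{-1}au\rangle$ and the latter cancels the potential term $\frac{|u|^2 - 1}{\ep^2}\re\langle u, v\rangle$ in~\eqref{eq:YMH-1st-outer}, so only the boundary integrals of $\langle \iota_\nu F_D, \iota_X F_D\rangle$ and $\re\langle D_\nu u, D_X u\rangle$ survive; this is part~(a). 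For part~(b), the second outer variation at $(D_X u, \iota_X F_D)$ is already in the bulk form displayed on the right-hand side of~\eqref{eq:YMH-2nd-inner-outer}, so no integration by parts is needed there, while applying the same cancellation to the extra first-variation term at $\big( D_X D_X u,\ \iota_X d(\iota_X F_D) \big)$ leaves exactly the remaining boundary integrals of $\langle \iota_\nu F_D, \iota_X d(\iota_X F_D)\rangle$ and $\re\langle D_\nu u, D_X D_X u\rangle$. The main obstacle here is bookkeeping rather than conceptual: one must carefully verify that the second-order jets $\tfrac{d^2}{dt^2}\hat u_t|_0$ and $\tfrac{d^2}{dt^2}\hat A_t|_0$ are indeed the claimed gauge-covariant expressions — this is what forces the quadratic term in $\theta_t$ — and then keep track of signs and of the outward-normal conventions through the several integration-by-parts steps behind~\eqref{eq:YMH-2nd-inner-outer}. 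No analytic input beyond the smoothness of $(u, D)$ on $\Omega$ and the equations~\eqref{eq:YMH-eq} is needed.
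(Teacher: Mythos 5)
Your proposal is correct, and the route through the main computational step is genuinely different from the paper's. The paper differentiates the pulled-back energy directly, which produces the Lie derivatives $\cL_X Du$ and $\cL_X\cL_X Du$; it then relies on Lemma~\ref{lemm:YMH-formulas} to rewrite these and on the observation that all the gauge-dependent terms (those involving $A_X$, $\nabla_X A_X$) drop out because they are real parts of purely imaginary quantities. You instead compose the pullback with the $t$-dependent gauge transformation $e^{\sqrt{-1}\theta_t}$, $\theta_t = -t\,\iota_X A - \tfrac{t^2}{2}X(\iota_X A)$, so that the modified curve has first jet $(D_X u,\ \iota_X F_D)$ and second jet $(D_X D_X u,\ \iota_X d(\iota_X F_D))$ -- I checked these jet computations and they are right, using $\cL_X A = d(\iota_X A)+\iota_X F_D$ and $\cL_X\cL_X A = d(X(\iota_X A))+\iota_X d(\iota_X F_D)$ -- and then you invoke gauge invariance of the localized functional plus the chain rule. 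This makes the gauge covariance of the inner variation structurally transparent and replaces the term-by-term cancellations of Lemma~\ref{lemm:YMH-formulas}(b)--(c), at the modest cost of having to guess the quadratic correction in $\theta_t$; the paper's computation is more pedestrian but self-contained. From that point on, your argument coincides with the paper's: test the Euler--Lagrange equations~\eqref{eq:YMH-eq} against $(D_X u, \iota_X F_D)$ (respectively $(D_X D_X u, \iota_X d(\iota_X F_D))$) and integrate by parts over $\Omega_1$, so that the bulk terms of the first variation cancel and only boundary integrals survive. One cosmetic remark: the integration by parts actually produces the curvature boundary terms with a factor $2\ep^{2}$ rather than $2$; this factor is missing in the paper's statement and proof as well, and is immaterial for the later application on $S^n$ where these boundary contributions cancel between the two hemispheres, but your chain-rule derivation would (correctly) keep it.
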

\begin{rmk}\label{rmk:integrand-inv}
Notice that terms of the form $D D_X u$ and $D_X D_X u$ in (b) do not make sense unless $u$ has higher regularity than $W^{1, 2}$. This is why we work in domains over which the weak solution is gauge equivalent to a smooth one. In Section 6 we patch things together on $S^{n}$ with the help of Lemma~\ref{lemm:patching}.
\end{rmk}
Before giving the proof of Proposition~\ref{prop:YMH-inner-outer}, we single out some of the computations to be used for the proof of Proposition~\ref{prop:YMH-inner-outer} in the Lemma below.
\begin{lemm}\label{lemm:YMH-formulas}
In the notation of Proposition~\ref{prop:YMH-inner-outer}, we have:
\begin{enumerate}
\item[(a)] $\cL_X Du = D(D_X u) -\sqrt{-1}(\iota_X F)u + \sqrt{-1}(A_X) Du$.
\vskip 1mm
\item[(b)] 
\begin{align*}
\cL_X \cL_X Du =&\ D D_X D_X u - \sqrt{-1}(\iota_X d\iota_X F)u+ (\sqrt{-1}\nabla_X A_X - |A_X|^2)Du\\
& + 2A_X (\iota_X F)u -2\sqrt{-1}(\iota_X F)D_X u + 2\sqrt{-1}A_X D D_X u.
\end{align*}
\vskip 1mm
\item[(c)] Letting $\theta: \Omega \to \RR$ be a smooth function, then we have, pointwise,
\begin{equation}\label{eq:YMH-1st-gauge-var}
2\re\langle Du, \sqrt{-1}\theta Du \rangle + \frac{|u|^{2} - 1}{\ep^{2}}\re\big( u \overline{\sqrt{-1}\theta u} \big)= 0.
\end{equation}
\end{enumerate}
\end{lemm}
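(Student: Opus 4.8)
The three identities are verified by straightforward (if somewhat lengthy) direct computation in the unitary trivialization $L|_\Omega \simeq \Omega \times \CC$, where $D = d - \sqrt{-1}A$ and $Du = du - \sqrt{-1}Au$.

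For part (a), I would start from $Du = du - \sqrt{-1}Au$ and apply the Lie derivative $\cL_X$, using that $\cL_X$ commutes with $d$ and satisfies the Leibniz rule. This gives $\cL_X Du = d(\cL_X u) - \sqrt{-1}(\cL_X A)u - \sqrt{-1}A(\cL_X u)$. Now $\cL_X u = X(u) = du(X) = D_X u + \sqrt{-1}A_X u$, and by Cartan's formula $\cL_X A = d(\iota_X A) + \iota_X dA = d(A_X) + \iota_X F$. Substituting these in and regrouping terms so as to reconstitute $D(D_X u) = d(D_X u) - \sqrt{-1}A(D_X u)$, the remaining terms should collect into $-\sqrt{-1}(\iota_X F)u + \sqrt{-1}A_X Du$ after cancellation; the bookkeeping of the $d(A_X)u$ and $A_X\,du$ terms against $\sqrt{-1}A_X\cdot(-\sqrt{-1}A u)$ is the only place to be careful.

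For part (b), I would apply $\cL_X$ a second time to the formula in (a), again using $\cL_X \circ d = d \circ \cL_X$, the Leibniz rule, and the identities $\cL_X u = D_X u + \sqrt{-1}A_X u$, $\cL_X(\iota_X F) = \iota_X \cL_X F = \iota_X d\iota_X F$ (since $\cL_X F = \cL_X dA = d\cL_X A = d(d A_X + \iota_X F) = d\iota_X F$), and $\cL_X A_X = X(A_X) = \nabla_X A_X$. Each of the three terms on the right of (a) contributes, and after expressing $\cL_X(D D_X u)$ via (a) applied with $D_X u$ in place of $u$ (i.e.\ $\cL_X D(D_X u) = D(D_X D_X u) - \sqrt{-1}(\iota_X F)(D_X u) + \sqrt{-1}A_X D(D_X u)$), all terms are present to assemble the claimed expression. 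This is the main computational obstacle: there are many terms, several with factors of $\sqrt{-1}A_X$, and one must track them carefully to obtain exactly the six terms listed.

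Part (c) is immediate: expanding $\re\langle Du, \sqrt{-1}\theta Du\rangle = \theta\,\re\langle Du, \sqrt{-1}Du\rangle = \theta\, \re(-\sqrt{-1}|Du|^2) = 0$ since $|Du|^2$ is real, and likewise $\re(u\,\overline{\sqrt{-1}\theta u}) = -\theta\,\re(\sqrt{-1}|u|^2) = 0$. These reflect the infinitesimal gauge invariance of $F_\ep$ along the variation $(v,a) = (\sqrt{-1}\theta u, d\theta)$, and could alternatively be deduced from the gauge invariance identity $F_\ep(u,D) = F_\ep(e^{\sqrt{-1}\varphi}u, D - \sqrt{-1}d\varphi)$ by differentiating in $\varphi$.
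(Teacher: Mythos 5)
Your proposal is correct and follows essentially the same route as the paper: part (b) is obtained exactly as in the paper by applying (a) with $D_X u$ in place of $u$ together with the Leibniz rule and $\cL_X(\iota_X F)=\iota_X d\iota_X F$, and part (c) is the same observation that one is taking real parts of purely imaginary quantities. The only cosmetic difference is in (a), where you expand $Du = du - \sqrt{-1}Au$ and apply Cartan's formula to $A$, whereas the paper applies Cartan's formula directly to the $1$-form $Du$ and uses $D^2u = -\sqrt{-1}Fu$; both amount to the same direct computation.
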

\begin{proof}
For (a) we recall that $d = D + \sqrt{-1}A$ and compute
\begin{align}
\nonumber \cL_{X}Du &= (d\iota_X + \iota_X d)Du\\
\label{eq:Lie-Du-middle} &= (D + \sqrt{-1}A)D_X u  + \iota_X (D + \sqrt{-1}A)Du.
\end{align}
In the second term, $D + \sqrt{-1}A$ acts like an exterior derivative, so
\[
(D + \sqrt{-1}A)Du = D^2 u + \sqrt{-1}A \wedge Du = -\sqrt{-1}Fu + \sqrt{-1}A \wedge Du.
\]
Thus, the last line in~\eqref{eq:Lie-Du-middle} becomes
\begin{align*}
&D(D_X u) + \sqrt{-1}A D_X u -\sqrt{-1}(\iota_{X}F )u + \sqrt{-1}\iota_X(A \wedge Du)\\
=&D(D_X u) -\sqrt{-1}(\iota_X F)u + \sqrt{-1}(A_X) Du.
\end{align*}

For part (b), we first use part (a) to get
\begin{equation}\label{eq:YMH-Lie-2}
\cL_{X}\cL_{X} Du = \cL_{X} D D_X u - \sqrt{-1}\cL_X \big( (\iota_X F)u \big) + \sqrt{-1}\cL_X \big( (A_X)Du \big).
\end{equation}
Applying part (a) with $D_X u $ in place of $u$ to the first term on the right-hand side yields
\begin{align*}
\cL_X DD_X u = D D_X D_X u - \sqrt{-1}(\iota_X F) D_X u + \sqrt{-1}A_X D D_X u.
\end{align*}
Substituting into~\eqref{eq:YMH-Lie-2} and performing some routine calculations, we get part (b).

Finally, part (c) holds because we are taking the real parts of purely imaginary numbers. 
\end{proof}
We are now ready to give the proof of Proposition~\ref{prop:YMH-inner-outer}.
\begin{proof}[Proof of Proposition~\ref{prop:YMH-inner-outer}]
We begin with (a). Differentiating~\eqref{eq:YMH-inner-integral} in $t$, we obtain
\begin{equation}\label{eq:YMH-1st-inner-diff}
\delta F_{\ep}(u, D; \Omega_1)(X) = \int_{\Omega_1} 2\ep^{2}\langle F, \cL_{X}F \rangle + 2\re\langle Du, \cL_{X}Du \rangle + \frac{|u|^{2} - 1}{\ep^{2}}\re\langle u, \cL_{X}u\rangle d\mu_g.
\end{equation}
Now recall the following facts:
\[
\cL_{X}F = (d\iota_X + \iota_X d)F = d(\iota_X F),
\]
\[
\cL_{X}u = \nabla_X u=  D_X u + \sqrt{-1}A_X u.
\]
Substituting these identities along with Lemma~\ref{lemm:YMH-formulas}(a) into~\eqref{eq:YMH-1st-inner-diff}, and using Lemma~\ref{lemm:YMH-formulas}(c) to eliminate some of the terms, we get
\begin{align}
\nonumber \delta F_{\ep}(u, D)(X) =& \int_{\Omega_1} 2\ep^{2}\langle F, d(\iota_X F) \rangle + 2\re\langle Du, D(D_X u) - \sqrt{-1}(\iota_X F)u \rangle\\
\label{eq:YMH-1st-inner-outer-1} & + \frac{|u|^{2} - 1}{\ep^{2}}\re\langle u, D_X u\rangle d\mu_g.
\end{align}
We now transform the above into a boundary integral. Testing~\eqref{eq:YMH-eq} against $(D_X u, \iota_X F)$ and integrating by parts over $\Omega_1$, we find that
\begin{align*}
0 =& \int_{\Omega_1} 2\ep^{2}\langle d^{\ast}F, \iota_X F \rangle - 2\langle u \times Du, \iota_X F \rangle + 2\re\langle D^{\ast}Du, D_{X}u \rangle + \frac{|u|^{2} - 1}{\ep^{2}}\re \langle u, D_X u\rangle d\mu_g\\
=& \int_{\Omega_1} 2\ep^{2}\langle F, d(\iota_X F) \rangle + 2\re\langle Du, D(D_X u) - \sqrt{-1}(\iota_X F)u \rangle+ \frac{|u|^{2} - 1}{\ep^{2}}\re \langle u, D_X u\rangle d\mu_g\\
&-2\int_{\partial\Omega_1} \langle \iota_{\nu}F, \iota_X F \rangle d\sigma_g - 2\re\int_{\partial\Omega_1}\langle D_{\nu}u, D_X u \rangle d\sigma_g.
\end{align*}
Combining this with~\eqref{eq:YMH-1st-inner-outer-1} gives~\eqref{eq:YMH-1st-inner-outer}.

To prove (b), we differentiate~\eqref{eq:YMH-inner-integral} once more to get
\begin{align}
\label{eq:YMH-2nd-inner-diff} \delta^{2}F_{\ep}(u, D; \Omega_1)(X, X)= & \int_{\Omega_1}2\ep^{2} |\cL_X F|^{2} + 2\ep^2\langle F, \cL_X \cL_X F \rangle \\
\nonumber&+ 2|\cL_X Du|^2 + 2\re\langle Du, \cL_X \cL_X Du \rangle\\
\nonumber& + \frac{|u|^{2} - 1}{\ep^2}\big( |\cL_X u|^{2} + \re\langle u, \cL_X \cL_X u \rangle \big) + \frac{2\re \langle u, \cL_X u \rangle^2}{\ep^2} d\mu_g.
\end{align}

Introducing the notation 
\[v = D_X u,\ a = \iota_X F,\ w = D_X D_X u,\ b = \iota_X d\iota_X F,
\]
we note that by Lemma~\ref{lemm:YMH-formulas}(b)(c), we have
\begin{align*}
&2|\cL_X Du|^{2} + 2\re\langle Du, \cL_X \cL_X Du \rangle\\
=&\ 2| Dv - \sqrt{-1}au+ \sqrt{-1}A_X Du |^{2} \\
&+ 2\re\langle Du, D w - \sqrt{-1}bu+ (\sqrt{-1}\nabla_X A_X - |A_X|^2)Du\rangle\\
&+  2\re\langle Du, 2(A_X) au -2\sqrt{-1}a v + 2\sqrt{-1}A_X Dv\rangle\\
=&\ 2|Dv - \sqrt{-1}au|^2 + 2|A_X|^2 |Du|^2 + 4\re\langle Dv - \sqrt{-1}au, \sqrt{-1}A_X Du\rangle\\
& + 2\re\langle Du, D w - \sqrt{-1}bu\rangle + 2\re\langle Du,(\sqrt{-1}\nabla_X A_X - |A_X|^2)Du \rangle\\
& -4\re\langle Du, \sqrt{-1}a v  \rangle + 4\re\langle Du, (A_X )au + \sqrt{-1}A_X Dv \rangle.
\end{align*}
Since $\sqrt{-1}A_X$ is purely imaginary, we have
\[
4\re\langle Dv - \sqrt{-1}au, \sqrt{-1}A_X Du\rangle + 4\re\langle Du, (A_X )au + \sqrt{-1}A_X Dv \rangle = 0.
\]
Moreover, by Lemma~\ref{lemm:YMH-formulas}(c) we have $\re\langle Du, \sqrt{-1}(\nabla_X A_X) Du \rangle = 0$. Consequently,
\begin{align*}
 2|A_X|^2 |Du|^2  + 2\re\langle Du,(\sqrt{-1}\nabla_X A_X - |A_X|^2)Du \rangle = 0.
\end{align*}
Thus we arrive at 
\begin{align*}
&\int_{\Omega_1}2|\cL_X Du|^{2} + 2\re\langle Du, \cL_X \cL_X Du \rangle d\mu_g\\
=&\ \int_{\Omega_1} 2|Dv - \sqrt{-1}au|^{2} -4\re\langle Du, \sqrt{-1}a v \rangle + 2\re\langle Du, Dw - \sqrt{-1}bu \rangle d\mu_g.
\end{align*}
A similar, but simpler, calculation shows that
\begin{align*}
&\int_{\Omega_1} \frac{|u|^{2} - 1}{\ep^2}\big( |\cL_X u|^{2} + \re\langle u, \cL_X \cL_X u \rangle \big) + \frac{2\re \langle u, \cL_X u \rangle^2}{\ep^2} d\mu_g\\
=&\ \int_{\Omega_1} \frac{|u|^2 - 1}{\ep^{2}}\big( |v|^{2} + \re\langle u, w \rangle \big) + \frac{2\re\langle u, v \rangle^2}{\ep^2}d\mu_g.
\end{align*}
Finally, it is straightforward to see that
\begin{align*}
&\int_{\Omega_1} 2\ep^{2} |\cL_X F|^{2} + 2\ep^2\langle F, \cL_X \cL_X F \rangle d\mu_g = \int_{\Omega_1} 2\ep^2 |da|^2 + 2\ep^{2}\langle F, db \rangle d\mu_g.
\end{align*}
Putting everything back into~\eqref{eq:YMH-2nd-inner-diff}, we get 
\begin{align*}
&\delta^2 F_{\ep}(u, D; \Omega_1)(X, X)\\
=&\ \int_{\Omega_1}2\ep^{2}|da|^{2} + 2|Dv - \sqrt{-1}au|^{2} -4\re\langle \sqrt{-1}av, Du \rangle + \frac{|u|^{2} - 1}{\ep^{2}}|v|^{2} + \frac{2\big(\re\langle u, v\rangle\big)^{2}}{\ep^{2}} d\mu_g\\
&+\int_{\Omega_1} 2\ep^2 \langle F, db \rangle + 2\re\langle Du, Dw - \sqrt{-1}bu \rangle + \frac{|u|^2 - 1}{\ep^2}\re\langle u, w \rangle d\mu_g.
\end{align*}
We are done upon repeating the end of the proof for part (a) to turn the second integral above into the following boundary integral:
\[
2\int_{\partial \Omega_1} \langle \iota_\nu F, b \rangle + \re\langle D_\nu u, w \rangle d\sigma_g.
\]
\end{proof}

\section{Stable critical points of $F_{\ep}$ on $S^{n}$}
Suppose $(u, D) \in \cC$ is a stable weak solution to~\eqref{eq:YMH-eq} on $S^{n}$ with $n \geq 2$. The following proposition relates stability as defined in~\eqref{eq:YMH-outer-stable} to stability with respect to inner variations with respect to the vector fields $X_\xi$. To emphasize the dependence of the YMH action density on the metric, we use the following notation:
\begin{align*}
e_{\ep}(u, D, g)d\mu_g &= \Big(\ep^2 |F_D|_{g}^{2} + |Du|_{g}^2 + \frac{(1 - |u|^2)^2}{4\ep^2}\Big)d\mu_g\\
&= \Big(\frac{1}{2}\ep^2 g^{ik}g^{jl}F_{ij}F_{kl} + g^{ij}\langle D_i u, D_j u \rangle + \frac{(1 - |u|^2)^2}{4\ep^2}\Big) \sqrt{\det(g)}dx.
\end{align*}

\begin{prop}\label{prop:YMH-inner-stable}
Let $\xi \in S^{n}$ and let $X = X_\xi$ be as in Proposition~\ref{prop:confKilling}, with $\varphi_{t}$ being the flow it generates. Let $g_t = \varphi_{-t}^{\ast}g$. If $(u, D)$ is a stable weak solution of~\eqref{eq:YMH-eq}, then 
\begin{enumerate}
\item[(a)]
\begin{equation}\label{eq:YMH-1st-inner-formula}
0= \int_{S^n} e_{\ep}(u, D)\Div(\nabla_{X} X) - 2\big( \ep^2 F_{e_i, e_k}F_{e_i, e_k} + \langle D_{e_i}u, D_{e_j}u \rangle \big)\langle \nabla_{i}(\nabla_X X), e_j \rangle d\mu_g.
\end{equation}
\item[(b)]
\begin{align}
\nonumber 0\leq& \int_{S^n} e_{\ep}(u, D)\big( (\Div X)^{2} - \Ric(X, X) - \langle \nabla_{e_{i}}X, e_{j} \rangle \langle \nabla_{e_{j}}X, e_{i} \rangle \big) d\mu\\ 
\nonumber & -4\int_{S^n} (\Div X) \langle \nabla_{i}X, e_j \rangle \big( \ep^2 F_{e_i, e_k}F_{e_j, e_k} + \langle D_{e_i}u, D_{e_j}u \rangle \big) d\mu_g\\
\nonumber & +2 \int_{S^n} \big(\langle R_{e_i, X}X, e_j\rangle -\langle \nabla_{i}X, \nabla_{j}X \rangle\big)\big( \ep^2 F_{e_i, e_k}F_{e_j, e_k} + \langle D_{e_i}u, D_{e_j}u \rangle \big)  d\mu_g\\
\nonumber & + \ep^{2}\int_{S^n}(\cL_X g)_{e_i, e_j} (\cL_X g)_{e_k, e_l} F_{e_i, e_k} F_{e_j, e_l} d\mu_g\\
\label{eq:YMH-inner-stable}  & + 2\int_{S^n} (\cL_X g)_{e_i, e_l} (\cL_X g)_{e_j, e_l}\big( \ep^2 F_{e_i, e_k}F_{e_j, e_k} + \langle D_{e_i}u, D_{e_j}u \rangle \big) d\mu_g.
\end{align}
\end{enumerate}
\end{prop}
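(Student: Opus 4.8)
The plan is to read off \eqref{eq:YMH-1st-inner-formula} and \eqref{eq:YMH-inner-stable} from Proposition~\ref{prop:YMH-inner-outer} by localizing to an exhaustion of $S^n$ minus a point and letting the boundary terms disappear, exactly as \eqref{eq:2nd-innervar-solution} and Corollary~\ref{coro:inner-outer-coro} are obtained in the Ginzburg--Landau case; the only new features are the extra curvature term $\ep^2|F_D|^2$, whose inner variation is handled by the same metric-variation calculus as in Proposition~\ref{prop:2nd-var} together with the connection identities of Lemma~\ref{lemm:YMH-formulas}, and the absence of a global smooth gauge, for which I would use Lemma~\ref{lemm:patching}. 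Fix $p \in S^n$ and put $\Omega = S^n \setminus \{p\}$, which is contractible; on any contractible domain, and in particular on each member of the exhaustion below, $(u, D)$ is gauge equivalent to a smooth solution by the discussion around \eqref{eq:gauge-fixing}, and by Lemma~\ref{lemm:patching} every gauge-invariant quantity built from $(u, D)$ — in particular $F_D$, $\langle D_{e_i}u, D_{e_j}u\rangle$, the form $\alpha$ and the function $\Phi_X$ of that lemma — is the restriction of a smooth, hence bounded, object on all of $S^n$. Note also that $\delta F_\ep(u, D)(X)$ and $\delta^2 F_\ep(u, D)(X, X)$ make sense on $S^n$ since $t \mapsto F_\ep(\varphi_t^\ast u, \varphi_t^\ast D)$ is smooth.

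For part (a), I would apply \eqref{eq:YMH-1st-inner-outer} with $\nabla_X X$ in place of $X$ on an exhaustion $\Omega_1 \nearrow \Omega$ by smooth domains whose boundaries are geodesic spheres of radius $r \to 0$ about $p$. The right-hand side is the integral over $\partial \Omega_1$ of a bounded, gauge-invariant integrand, hence tends to $0$. Rewriting the left-hand side $\delta F_\ep(u, D; \Omega_1)(\nabla_X X)$ as a bulk integral by differentiating $\int_{\Omega_1} e_\ep(u, D, g_t)\, d\mu_{g_t}$ once at $t = 0$ — this is the computation leading to \eqref{eq:1st-innervar}, with the additional term $2\ep^2 F_{e_i, e_k}F_{e_j, e_k}\langle \nabla_{e_i}(\nabla_X X), e_j\rangle$ produced by the two inverse-metric factors in $|F_D|^2$ — and letting $\Omega_1 \nearrow \Omega$, the bulk integrals converge to the integral over $S^n$ in \eqref{eq:YMH-1st-inner-formula}, giving (a).

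For part (b), I would first carry out the second-variation computation of Proposition~\ref{prop:2nd-var} with $F_\ep$ in place of $E_\ep$ on each $\Omega_1$. Differentiating $\int_{\Omega_1} e_\ep(u, D, g_t)\, d\mu_{g_t}$ twice, using Lemma~\ref{lemm:calculations}(c),(d) for the metric derivatives (which produces the $(\Div X)^2 - \Ric(X, X)$ term, the $\langle \nabla_{e_i}X, e_j\rangle\langle \nabla_{e_j}X, e_i\rangle$ term and the $\cL_X g$ contractions — now $|F_D|^2$ also contributing $\cL_X g$ quadratic terms) and Lemma~\ref{lemm:YMH-formulas} to commute covariant derivatives on the Higgs field (which produces the $R(e_i, X, X, e_j)$-type terms and, after writing $D_X D_X u$ as a second covariant derivative plus a $D_{\nabla_X X}u$ correction, the $\nabla_X X$ terms), one finds that $\delta^2 F_\ep(u, D; \Omega_1)(X, X)$ equals the $\Omega_1$-integral of the right-hand side of \eqref{eq:YMH-inner-stable} plus the $\Omega_1$-integral of the integrand of \eqref{eq:YMH-1st-inner-formula}, plus bounded gauge-invariant boundary terms over $\partial \Omega_1$. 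Letting $\Omega_1 \nearrow \Omega$ removes the boundary terms, and part (a) cancels the $\nabla_X X$ contribution, so $\delta^2 F_\ep(u, D)(X, X)$ equals the right-hand side of \eqref{eq:YMH-inner-stable}. That this is $\geq 0$ would follow, as in Proposition~\ref{prop:inner-outer} and Corollary~\ref{coro:inner-outer-coro}: in the smooth gauge on $\Omega$ one has $\delta^2 F_\ep(u, D; \Omega_1)(X, X) = \delta^2 F_\ep(u, D; \Omega_1)((v_0, a_0)) + \delta F_\ep(u, D; \Omega_1)((w_0, b_0))$, where $(v_0, a_0)$ and $(w_0, b_0)$ are the first- and second-order Taylor coefficients of $(\varphi_t^\ast u, \varphi_t^\ast D)$; the second term is a vanishing boundary contribution by \eqref{eq:YMH-1st-inner-outer} since $(u, D)$ solves \eqref{eq:YMH-eq} on $\Omega$, and the first is $\geq 0$ by outer stability \eqref{eq:YMH-outer-stable} after passing to the limit $\Omega_1 \nearrow \Omega$ and approximating $(v_0, a_0)$ by smooth variations supported away from $p$.

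The step I expect to be the main obstacle is not any particular identity but the analytic bookkeeping forced by the missing global gauge: one must check that the densities produced by the local second-variation computation really are restrictions of smooth objects on $S^n$, so that Lemma~\ref{lemm:patching} legitimizes the limit $\Omega_1 \nearrow \Omega$; that the boundary integrals over geodesic spheres shrinking to $p$ genuinely vanish (for which boundedness of all integrands near $p$ suffices); and — most delicately — that outer stability \eqref{eq:YMH-outer-stable}, tested only against smooth $(v, a)$, transfers to the inner-variation direction $(v_0, a_0)$, which has only Sobolev regularity, a point that needs a density argument but is clean because $\delta^2 F_\ep(u, D)$ is a continuous quadratic form in the relevant topology.
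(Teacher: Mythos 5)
Your overall strategy --- localize to a smooth gauge, use Proposition~\ref{prop:YMH-inner-outer} to pass between inner and outer variations, kill boundary terms, and feed the direction $(D_Xu,\iota_XF_D)$ into outer stability via a cutoff near a point --- is the same philosophy as the paper's, and your last step is essentially the paper's Lemma~\ref{lemm:YMH-outer-global} (the capacity argument with $\int|d\zeta_k|^2\to 0$, which needs $n\geq 2$; this is more than ``continuity of the quadratic form,'' since $(v_0,a_0)$ only exists in a local gauge and must be cut off and estimated, but you do anticipate the right mechanism). The genuine gap is in your treatment of the inner variations on the exhausting domains. You repeatedly identify $\delta F_{\ep}(u,D;\Omega_1)(Y)$ and $\delta^{2}F_{\ep}(u,D;\Omega_1)(X,X)$, which by definition \eqref{eq:YMH-1st-inner}--\eqref{eq:YMH-2nd-inner} are $t$-derivatives of $\int_{\Omega_1}e_{\ep}(\varphi_t^{\ast}u,\,d-\sqrt{-1}\varphi_t^{\ast}A,\,g)\,d\mu_g$ over the \emph{fixed} domain $\Omega_1$, with the $t$-derivatives of $\int_{\Omega_1}e_{\ep}(u,D,g_t)\,d\mu_{g_t}$, which is what produces the stress-energy integrands appearing in \eqref{eq:YMH-1st-inner-formula} and \eqref{eq:YMH-inner-stable}. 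By the change of variables $\int_{\Omega_1}e_{\ep}(u,D,g_t)\,d\mu_{g_t}=\int_{\varphi_t(\Omega_1)}e_{\ep}(\varphi_t^{\ast}u,\varphi_t^{\ast}A,g)\,d\mu_g$, these two quantities coincide only when the flow preserves $\Omega_1$; otherwise they differ by flux terms across $\partial\Omega_1$ (at first order $\int_{\partial\Omega_1}e_{\ep}\langle X,\nu\rangle\,d\sigma_g$, and a messier collection at second order). For your exhaustion $\Omega_1=S^n\setminus B_r(p)$ the conformal flow of $X_\xi$ does \emph{not} preserve $\Omega_1$, so the fixed-$r$ identities you assert are false as stated, and these extra boundary terms are nowhere in your bookkeeping --- indeed your list of ``main obstacles'' omits this point entirely. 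This is exactly the issue the paper's proof is engineered to avoid: it fixes $\xi$, takes $\Sigma$ to be the equator, observes that $X_\xi$ and $\nabla_X X=-f_\xi X_\xi$ are tangent to $\Sigma$ so that their flows preserve the hemispheres $S^n_{\pm}$ (this is what justifies \eqref{eq:YMH-2nd-inner-pullback}), applies Proposition~\ref{prop:YMH-inner-outer} on each hemisphere in two gauges defined off small balls around the poles, and cancels the two equatorial boundary integrals exactly by gauge invariance via Lemma~\ref{lemm:patching}.

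Your route is salvageable: the missing flux terms are boundary integrals over geodesic spheres of radius $r\to 0$ whose integrands are gauge-invariant quantities ($e_{\ep}(u,D)$, $\langle Du,D_Xu\rangle$-type expressions, derivatives of $e_{\ep}$) that are smooth on all of $S^n$ by Lemma~\ref{lemm:patching}, hence bounded, so they vanish in the limit together with the boundary terms you do keep track of. But as written the proof asserts identities that fail for each fixed $\Omega_1$, and the repair requires writing out the transport/flux correction at second order, which is precisely the computation the hemisphere trick renders unnecessary. Two smaller inaccuracies: the curvature-commutation input for the Higgs field in the stress-energy route comes from Lemma~\ref{lemm:calculations}, not Lemma~\ref{lemm:YMH-formulas} (the latter is needed for the outer-variation route of Proposition~\ref{prop:YMH-inner-outer}); and the vanishing of the first-outer-variation term $\delta F_{\ep}(u,D;\Omega_1)((w_0,b_0))$ up to boundary terms follows from testing the equation and integrating by parts as at the end of the proof of Proposition~\ref{prop:YMH-inner-outer}, not from \eqref{eq:YMH-1st-inner-outer} itself.
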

\begin{proof}
Without loss of generality we assume $\xi = (1,0, \cdots, 0)$ and let $\Sigma = S^{n} \cap \{x^{n + 1} = 0\}$. Then both $X$ and $\nabla_{X} X\ ( = - f_\xi X)$ are tangent to $\Sigma$, and hence generate flows that preserve the upper and lower hemispheres, $S_{+}^n$ and $S_{-}^{n}$. Below we let $Y = \nabla_X X$ and let $\psi_{t}$ denote the flow of $Y$. The pullback $\psi_{-t}^{\ast}g$ will be denoted $h_t$. In addition, $p_{\pm} \in S_{\pm}^{n}$ will denote the north and south poles, respectively.

To prove part (a), we first fix some $\delta$ small and apply the considerations in Section 5.1 to $\Omega = S^{n}\setminus B_{\delta}(p^{-})$, $\Omega_1 = S^{n}_{+}$. Since $\Omega$ is contractible, there exists $\theta \in W^{2, 2}(\Omega; \RR)$ such that $(\widetilde{u},\widetilde{D}) = (e^{\sqrt{-1}\theta}u, D - \sqrt{-1}d\theta)$ is smooth in $\Omega$ and by Proposition~\ref{prop:YMH-inner-outer}(a) we have
\begin{equation}
\delta F_{\ep}(\widetilde{u}, \widetilde{D}; S^{n}_{+})(Y) = 2\int_{\Sigma} \langle \iota_{\nu}F_{\widetilde{D}}, \iota_Y F_{\widetilde{D}} \rangle d\sigma_g + 2\re\int_{\Sigma}\langle \widetilde{D}_{\nu}\widetilde{u}, \widetilde{D}_Y \widetilde{u} \rangle d\sigma_g.
\end{equation}
On the other hand, since $\psi_t$ preserves $S^{n}_{+}$, we find by the gauge invariance of $F_{\ep}$ that 
\begin{align}
\nonumber\int_{S^{n}_{+}} \ep^2|\psi_{t}^{\ast}F_{\widetilde{D}}|^2 + |\psi_{t}^\ast(\widetilde{D}\widetilde{u})|^2 + \frac{(1 - |\psi_{t}^{\ast}\widetilde{u}|^2)^2}{4\ep^2}d\mu_g&= \int_{S^{n}_{+}} e_{\ep}(\widetilde{u}, \widetilde{D}, h_t)d\mu_{h_t}\\
\label{eq:YMH-2nd-inner-pullback}&= \int_{S^{n}_{+}} e_{\ep}(u, D, h_t)d\mu_{h_t}.
\end{align}
Differentiating in $t$ and recalling the definition of $\delta F_{\ep}(\widetilde{u}, \widetilde{D}; S^{n}_{+})(Y)$, we get 
\begin{align*}
&\delta F_{\ep}(\widetilde{u}, \widetilde{D}; S^{n}_{+})(Y)\\
 = &-\int_{S^n} e_{\ep}(u, D)\Div Y - 2\big( \ep^2 F_{e_i, e_k}F_{e_j, e_k} + \langle D_{e_i}u, D_{e_j}u \rangle \big)\langle \nabla_{i}Y, e_j \rangle d\mu_g .
\end{align*}
To sum up, we have shown that 
\begin{align}\label{eq:YMH-1st-upper-boundary}
&\int_{S^n} e_{\ep}(u, D)\Div Y - 2\big( \ep^2 F_{e_i, e_k}F_{e_j, e_k} + \langle D_{e_i}u, D_{e_j}u \rangle \big)\langle \nabla_{i}Y, e_j \rangle d\mu_g\\
=& -2\int_{\Sigma} \langle \iota_{\nu}F_{\widetilde{D}}, \iota_Y F_{\widetilde{D}} \rangle + \re\langle \widetilde{D}_{\nu}\widetilde{u}, \widetilde{D}_Y \widetilde{u} \rangle d\sigma_g, \nonumber
\end{align}
where $\nu$ is the unit normal to $\Sigma$ that points downward. Similarly, on $S^{n}\setminus B_{\delta}(p_{+})$ we can find a real-valued $\phi$ in $W^{2, 2}$ such that, letting 
\[
(\widehat{u}, \widehat{D}) = (e^{\sqrt{-1}\phi}u, D - \sqrt{-1}d\phi),
\]
and with $\nu$ as in~\eqref{eq:YMH-1st-upper-boundary}, we have
\begin{align}\label{eq:YMH-1st-lower-boundary}
&\int_{S^n} e_{\ep}(u, D)\Div Y - 2\big( \ep^2 F_{e_i, e_k}F_{e_j, e_k} + \langle D_{e_i}u, D_{e_j}u \rangle \big)\langle \nabla_{i}Y, e_j \rangle d\mu_g\\
=& 2\int_{\Sigma} \langle \iota_{\nu}F_{\widehat{D}}, \iota_Y F_{\widehat{D}} \rangle + \re\langle \widehat{D}_{\nu}\widehat{u}, \widehat{D}_Y \widehat{u} \rangle d\sigma_g. \nonumber
\end{align}
In conclusion, the left-hand side of~\eqref{eq:YMH-1st-inner-formula} equals
\begin{equation}
2\int_{\Sigma} \langle \iota_{\nu}F_{\widehat{D}}, \iota_Y F_{\widehat{D}} \rangle + \re\langle \widehat{D}_{\nu}\widehat{u}, \widehat{D}_Y \widehat{u} \rangle d\sigma_g -2\int_{\Sigma} \langle \iota_{\nu}F_{\widetilde{D}}, \iota_Y F_{\widetilde{D}} \rangle + \re\langle \widetilde{D}_{\nu}\widetilde{u}, \widetilde{D}_Y \widetilde{u} \rangle d\sigma_g,
\end{equation}
which vanishes because $F_{\widetilde{D}} = F_{\widehat{D}}$ and because of Lemma~\ref{lemm:patching}(a).

To prove (b), differentiating twice the formula~\eqref{eq:YMH-2nd-inner-pullback} with $\varphi_t, g_t$ in place of $\psi_t, h_t$, respectively, we obtain
\begin{align}\label{eq:YMH-2nd-inner-pre}
\delta^2 F_{\ep}(\widetilde{u}, \widetilde{D}; S_{+}^n)(X, X) = \int_{S^{n}_{+}}\frac{d^2}{dt^2}e_{\ep}(u, D, g_t)\big|_{t=0}d\mu_{g_{t}}.
\end{align}
Using Proposition~\ref{prop:YMH-inner-outer}(b) to replace the left-hand side of~\eqref{eq:YMH-2nd-inner-pre}, we get that
\begin{align*}
&\int_{S^{n}_{+}}\frac{d^2}{dt^2}e_{\ep}(u, D, g_t)\big|_{t=0}d\mu_{g_{t}}\\
=&\int_{S_{+}^{n}}2\ep^{2}|d(\iota_X F_{\widetilde{D}})|^{2} + 2|\widetilde{D} \widetilde{D}_X \widetilde{u} - \sqrt{-1}(\iota_X F_{\widetilde{D}})\widetilde{u}|^{2} - 4\re\langle \sqrt{-1}(\iota_X F_{\widetilde{D}})\widetilde{D}_X \widetilde{u} , \widetilde{D}\widetilde{u} \rangle\\
&+ \frac{|\widetilde{u}|^{2} - 1}{\ep^{2}}|\widetilde{D}_X \widetilde{u}|^{2} + \frac{2\big(\re\langle \widetilde{u}, \widetilde{D}_X \widetilde{u} \rangle \big)^2}{\ep^2}d\mu_g\\
 &+ 2\int_{\Sigma} \langle \iota_{\nu}F_{\widetilde{D}}, \iota_X d(\iota_X F_{\widetilde{D}}) \rangle d\sigma_g + 2\re\langle \widetilde{D}_{\nu}\widetilde{u}, \widetilde{D}_X \widetilde{D}_X \widetilde{u} \rangle d\sigma_g\\
 =&\int_{S_{+}^{n}}2\ep^{2}|d(\iota_X F_{D})|^{2} + 2\Phi_X - 4\re\langle \sqrt{-1}(\iota_X F_{D})D_X u , Du \rangle + \frac{|u|^{2} - 1}{\ep^{2}}|D_X u|^{2} + \frac{2\big(\re\langle u, D_X u \rangle \big)^2}{\ep^2}d\mu_g\\
 &+ 2\int_{\Sigma} \langle \iota_{\nu}F_{\widetilde{D}}, \iota_X d(\iota_X F_{\widetilde{D}}) \rangle d\sigma_g + 2\re\langle \widetilde{D}_{\nu}\widetilde{u}, \widetilde{D}_X \widetilde{D}_X \widetilde{u} \rangle d\sigma_g,
\end{align*}
where $\Phi_X$ is as in Lemma~\ref{lemm:patching}(b). Repeating the argument on $S^{n}_{-}$, we get
\begin{align*}
&\int_{S^{n}_{-}}\frac{d^2}{dt^2}e_{\ep}(u, D, g_t)\big|_{t=0}  d\mu_{g_{t}}\\
 =&\int_{S_{+}^{n}}2\ep^{2}|d(\iota_X F_{D})|^{2} + 2\Phi_X - 4\re\langle \sqrt{-1}(\iota_X F_{D})D_X u , Du \rangle + \frac{|u|^{2} - 1}{\ep^{2}}|D_X u|^{2} + \frac{2\big(\re\langle u, D_X u \rangle \big)^2}{\ep^2}d\mu_g\\
 &- 2\int_{\Sigma} \langle \iota_{\nu}F_{\widehat{D}}, \iota_X d(\iota_X F_{\widehat{D}}) \rangle d\sigma_g + 2\re\langle \widehat{D}_{\nu}\widehat{u}, \widehat{D}_X \widehat{D}_X \widehat{u} \rangle d\sigma_g.
\end{align*}
Adding the computations on $S_{\pm}^{n}$ together, we arrive at 
\begin{align}
\nonumber&\int_{S^{n}}\frac{d^2}{dt^2}e_{\ep}(u, D, g_t)\big|_{t=0} d\mu_{g_{t}}\\
\nonumber =&\int_{S^{n}}2\ep^{2}|d(\iota_X F_{D})|^{2} + 2\Phi_X - 4\re\langle \sqrt{-1}(\iota_X F_{D})D_X u , Du \rangle + \frac{|u|^{2} - 1}{\ep^{2}}|D_X u|^{2} + \frac{2\big(\re\langle u, D_X u \rangle \big)^2}{\ep^2}d\mu_g\\
\nonumber &+ 2\int_{\Sigma} \langle \iota_{\nu}F_{\widetilde{D}}, \iota_X d(\iota_X F_{\widetilde{D}}) \rangle d\sigma_g + 2\re\langle \widetilde{D}_{\nu}\widetilde{u}, \widetilde{D}_X \widetilde{D}_X \widetilde{u} \rangle d\sigma_g\\
\label{eq:YMH-stable-computation} &- 2\int_{\Sigma} \langle \iota_{\nu}F_{\widehat{D}}, \iota_X d(\iota_X F_{\widehat{D}}) \rangle d\sigma_g + 2\re\langle \widehat{D}_{\nu}\widehat{u}, \widehat{D}_X \widehat{D}_X \widehat{u} \rangle d\sigma_g.
\end{align}
Again by Lemma~\ref{lemm:patching}, the last two lines cancel each other. On the other hand, by computations similar to those leading to~\eqref{eq:2nd-innervar} in Proposition~\ref{prop:2nd-var}, the first line equals the right-hand side of~\eqref{eq:YMH-inner-stable} plus the following term which vanishes by~\eqref{eq:YMH-1st-inner-formula},
\[
 \int_{S^n} e_{\ep}(u, D)\Div(\nabla_{X} X) - 2\big( \ep^2 F_{e_i, e_k}F_{e_j, e_k} + \langle D_{e_i}u, D_{e_j}u \rangle \big)\langle \nabla_{i}(\nabla_X X), e_j \rangle d\mu_g.
\]
Hence, the proof of inequality~\eqref{eq:YMH-inner-stable} is complete once we verify that the second line in~\eqref{eq:YMH-stable-computation} above is non-negative. This is the content of the Lemma below.
\end{proof}

\begin{lemm}\label{lemm:YMH-outer-global}
Suppose $(u, D) \in \cC$ is a stable weak solution to~\eqref{eq:YMH-eq} on $S^{n}$ with $|u| \leq 1$, and let $X$ be a vector field. Then
\begin{align}
\nonumber 0 \leq &\int_{S^{n}}2\ep^{2}|d(\iota_X F_{D})|^{2} + 2\Phi_X - 4\re\langle \sqrt{-1}(\iota_X F_{D})D_X u , Du \rangle\\
\label{eq:outer-global} &+ \frac{|u|^{2} - 1}{\ep^{2}}|D_X u|^{2} + \frac{2\big(\re\langle u, D_X u \rangle \big)^2}{\ep^2}d\mu_g.
\end{align}
\end{lemm}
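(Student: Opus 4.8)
The plan is to observe that the right-hand side of~\eqref{eq:outer-global} is exactly the second \emph{outer} variation $\delta^{2}F_{\ep}(u, D)$ evaluated at the special variation $(v, a) = (D_X u, \iota_X F_D)$, and then to deduce non-negativity from the stability hypothesis~\eqref{eq:YMH-outer-stable}. Concretely, substituting $v = D_X u$ and $a = \iota_X F_D$ into~\eqref{eq:YMH-2nd-outer}, the terms $2\ep^{2}|da|^{2}$, $-4\re\langle\sqrt{-1}av, Du\rangle$, $\tfrac{|u|^{2}-1}{\ep^{2}}|v|^{2}$ and $\tfrac{2(\re\langle u, v\rangle_L)^{2}}{\ep^{2}}$ become verbatim the corresponding terms in~\eqref{eq:outer-global}, while the remaining term $2|Dv - \sqrt{-1}au|^{2} = 2\big|D(D_X u) - \sqrt{-1}(\iota_X F_D)u\big|^{2}$ is gauge invariant and therefore, by Lemma~\ref{lemm:patching}(b), equals $2\Phi_X$. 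Hence $\delta^{2}F_{\ep}(u, D)\big((D_X u, \iota_X F_D)\big)$ is precisely the integral in~\eqref{eq:outer-global}, and it remains only to justify that the stability inequality applies to this (non-smooth) variation.

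To that end, I would first record the regularity of $(D_X u, \iota_X F_D)$. The form $\iota_X F_D$ is smooth, since $F_D$ is smooth on all of $M$ (cf.\ the discussion preceding Lemma~\ref{lemm:patching}). As for $D_X u$: on any contractible $\Omega$ over which $(u, D)$ is gauge equivalent to a smooth $(\widetilde u, \widetilde D)$ via a gauge $\theta \in W^{2,2}(\Omega;\RR)$, one has, by~\eqref{eq:YMH-gauge-rel}, $D_X u = e^{-\sqrt{-1}\theta}\widetilde D_X\widetilde u$ and likewise $D(D_X u) = e^{-\sqrt{-1}\theta}\,\widetilde D\widetilde D_X\widetilde u$; since $e^{-\sqrt{-1}\theta}$ is bounded with $W^{1,2}$ derivative and $\widetilde D_X\widetilde u$, $\widetilde D\widetilde D_X\widetilde u$ are smooth, it follows that $D_X u \in W^{1,2}(M; L)\cap L^{\infty}$ and that $|D(D_X u)|$ is bounded on $M$. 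Next, choose smooth sections $v_k$ of $L$ with $v_k \to D_X u$ in $W^{1,2}(M; L)$ and $\sup_k\|v_k\|_{L^{\infty}} < \infty$ (mollify chart by chart in smooth local trivializations; convolution with a probability kernel does not increase the sup norm, and the patching introduces only a fixed multiplicative constant). Applying~\eqref{eq:YMH-outer-stable} to the smooth variations $(v_k, \iota_X F_D)$ gives $\delta^{2}F_{\ep}(u, D)\big((v_k, \iota_X F_D)\big) \geq 0$ for every $k$. Finally I would pass to the limit: because $|u|\leq 1$ and $\iota_X F_D$ is bounded, the zeroth-order and cross terms in~\eqref{eq:YMH-2nd-outer} are continuous in $v\in L^{2}$; and writing $D = D_0 - \sqrt{-1}a_0$ with $a_0\in W^{1,2}$, the uniform $L^\infty$ bound on the $v_k$ together with the Sobolev embedding forces $a_0 v_k \to a_0 D_X u$ in $L^{2}$, hence $Dv_k \to D(D_X u)$ in $L^{2}$ and $2|Dv_k - \sqrt{-1}(\iota_X F_D)u|^{2} \to 2\Phi_X$ in $L^1$. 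Thus $0 \leq \lim_k \delta^{2}F_{\ep}(u, D)\big((v_k, \iota_X F_D)\big) = \delta^{2}F_{\ep}(u, D)\big((D_X u, \iota_X F_D)\big)$, which is the right-hand side of~\eqref{eq:outer-global}.

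The main obstacle is the non-smoothness of the variation $(D_X u, \iota_X F_D)$: one must make sure that $D_X u$ genuinely belongs to $W^{1,2}(M; L)\cap L^{\infty}$ as a global section (not merely to $W^{1,2}$ on contractible pieces, carrying an uncontrolled gauge factor), that the relevant quadratic form is continuous along the chosen approximation — the delicate point being $L^{2}$-convergence of the covariant derivatives $Dv_k$ when $n \geq 5$, where $W^{1,2}\not\hookrightarrow L^{\infty}$ and the uniform $L^{\infty}$ bound on the $v_k$ is essential for controlling the lower-order term $a_0 v_k$ — and that $2\big|D(D_X u)-\sqrt{-1}(\iota_X F_D)u\big|^{2}$ is correctly identified with $2\Phi_X$ through Lemma~\ref{lemm:patching}(b). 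Once these points are settled, the lemma is just the substitution $(v, a) = (D_X u, \iota_X F_D)$ into~\eqref{eq:YMH-2nd-outer} combined with~\eqref{eq:YMH-outer-stable}.
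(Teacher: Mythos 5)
Your proposal is correct, but it takes a genuinely different route from the paper. You substitute the non-smooth variation $(D_X u, \iota_X F_D)$ directly into~\eqref{eq:YMH-2nd-outer} and justify this by a global density argument: you first show, using the local smooth gauge representatives and the gauge relations~\eqref{eq:YMH-gauge-rel}, that $D_X u$ is a globally defined section in $W^{1,2}\cap L^{\infty}$ with $|D(D_Xu)|$ bounded (indeed $|D(D_Xu)|\le \Phi_X^{1/2}+|\iota_XF_D|\,|u|$), identify $2|D(D_Xu)-\sqrt{-1}(\iota_XF_D)u|^2$ with $2\Phi_X$ via Lemma~\ref{lemm:patching}(b), approximate by smooth sections $v_k$ with a uniform sup bound, apply~\eqref{eq:YMH-outer-stable} to the smooth pairs $(v_k,\iota_XF_D)$, and pass to the limit; the only delicate convergence, $a_0 v_k \to a_0 D_Xu$ in $L^2$ for the lower-order part of $Dv_k$ when $W^{1,2}\not\hookrightarrow L^\infty$, is indeed handled by the uniform $L^\infty$ bound (H\"older/interpolation with $a_0\in L^{2n/(n-2)}$, or dominated convergence). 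The paper instead keeps everything local: it exhausts $S^n\setminus\{p^-\}$ by contractible domains, passes to a smooth gauge representative $(\widetilde u,\widetilde D)$ there, applies stability to the smooth, compactly supported cutoff variation $(\widetilde D_{\zeta_k X}\widetilde u,\iota_{\zeta_kX}F_{\widetilde D})$, and removes the cutoff using the vanishing capacity of a point, i.e.~\eqref{eq:capacity}, together with the global smoothness of the gauge-invariant densities $|F_D|$, $|D_Xu|^2$, $|u|^2$, $\Phi_X$. What each approach buys: the paper's cutoff scheme never needs continuity of the second-variation form under $W^{1,2}\cap L^\infty$ approximation (all error terms are explicit and controlled by $\int|d\zeta_k|^2\to 0$), whereas your scheme dispenses with the cutoff/capacity bookkeeping and makes the ``extend stability by density'' step explicit, at the cost of verifying the regularity of $D_Xu$ and the $L^2$ continuity of the covariant derivative term, which you do.
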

\begin{proof}
Let $\Omega_{k}$ be an increasing sequence of (contractible) domains exhausting $S^{n}\setminus\{p^{-}\}$. Since $n \geq 2$ we may find cut-off functions $\zeta_k$ such that $\supp(\zeta_k) \subset \Omega_{k + 1}$, $\supp(1 - \zeta_k) \subset S^{n}\setminus \Omega_k$, and 
\begin{equation}\label{eq:capacity}
\lim_{k \to \infty}\int_{S^{n}} |d\zeta_k|^2d\mu_g = 0.
\end{equation}
For each $k$, choose $(\widetilde{u}, \widetilde{D})$ smooth and gauge equivalent to $(u, D)$, and let $X_k = \zeta_k X$. Then by stability and~\eqref{eq:YMH-2nd-outer},~\eqref{eq:YMH-outer-stable}, we have
\begin{align*}
0 \leq&\ \delta^{2}F_{\ep}(\widetilde{u}, \widetilde{D})(\widetilde{D}_{X_{k}}\widetilde{u}, \iota_{X_{k}}F_{\widetilde{D}})\\
=&\int_{S^{n}}2\ep^{2}|d(\iota_{X_k} F_{D})|^{2} + 2\Phi_{X_k} - 4\re\langle \sqrt{-1}(\iota_{X_k} F_{D})D_{X_k} u , Du \rangle\\
&+ \frac{|u|^{2} - 1}{\ep^{2}}|D_{X_k} u|^{2} + \frac{2\big(\re\langle u, D_{X_k} u \rangle \big)^2}{\ep^2}d\mu_g\\
=&\int_{S^{n}}2\ep^{2}\zeta_k^2|d(\iota_{X} F_{D})|^{2} + 2\zeta_k^2\Phi_{X} - 4\zeta_k^2\re\langle \sqrt{-1}(\iota_{X} F_{D})D_{X} u , Du \rangle\\
&+ \frac{|u|^{2} - 1}{\ep^{2}}\zeta_k^2|D_{X} u|^{2} + \frac{2\zeta_k^2\big(\re\langle u, D_{X} u \rangle \big)^2}{\ep^2}d\mu_g\\
&+\int_{S^n} 4\ep^{2}\langle  (\iota_{X}F_{D})d\zeta_k, \zeta_k d(\iota_{X}F_{D}) \rangle + 2\ep^2|\iota_X F_{D}|^2|d\zeta_k|^2d\mu_g\\
&+ \int_{S^n}4\Big\langle (\widetilde{D}_X \widetilde{u})d\zeta_k, \zeta_k (\widetilde{D}\widetilde{D}_{X}u - \sqrt{-1}(\iota_{X}F_{\widetilde{D}}) \widetilde{u}) \Big\rangle + 2 |D_{X}u|^2 |d\zeta_k|^2d\mu_g.
\end{align*}
To finish the proof it suffices to let $k \to \infty$ and make sure that the last two lines both tend to zero. Since $F_{D}$ is smooth on all of $S^{n}$, we have by~\eqref{eq:capacity} and H\"older's inequality that
\begin{align*}
\lim_{k \to \infty}\int_{S^n} 4\ep^{2}\langle  (\iota_{X}F_{D})d\zeta_k, \zeta_k d(\iota_{X}F_{D}) \rangle + 2\ep^2|\iota_X F_{D}|^2|d\zeta_k|^2d\mu_g = 0.
\end{align*}
On the other hand, note that 
\begin{align*}
 &\int_{S^n}4\Big\langle (\widetilde{D}_X \widetilde{u})d\zeta_k, \zeta_k (\widetilde{D}\widetilde{D}_{X}u - \sqrt{-1}(\iota_{X}F) \widetilde{u}) \Big\rangle + 2 |D_{X}u|^2 |d\zeta_k|^2d\mu_g\\
 =&\ \int_{S^{n}} 2\langle \zeta_k d\zeta_k, d(|D_{X}u|^2) - \sqrt{-1}(\iota_X F) \nabla_{X}|u|^2 \rangle + 2|D_{X}u|^2 |d\zeta_k|^2 d\mu_g.
\end{align*}
Since $|D_{X}u|^2$ and $|u|^2$ are smooth on all of $S^n$, we again see by~\eqref{eq:capacity} and H\"older's inequality that 
\[
\lim_{k\to \infty}\int_{S^n}4\Big\langle (\widetilde{D}_X \widetilde{u})d\zeta_k, \zeta_k (\widetilde{D}\widetilde{D}_{X}u - \sqrt{-1}\iota_{X}F \widetilde{u}) \Big\rangle + 2 |D_{X}u|^2 |d\zeta_k|^2d\mu_g = 0.
\]
The proof of Lemma~\ref{lemm:YMH-outer-global} is complete.
\end{proof}
It is now rather straightforward to prove Theorem~\ref{thm:YMH-no-stable}.
\begin{proof}[Proof of Theorem~\ref{thm:YMH-no-stable}]
We first show that $F_{D} = 0$, $Du = 0$ and $|u| \equiv 1$. As in the remarks after Proposition~\ref{prop:confKilling}, we let $\xi \in S^{n}$ and apply~\eqref{eq:YMH-inner-stable} with $X = X_{\xi}$. Then with the help of Proposition~\ref{prop:confKilling} we get
\begin{align*}
0 \leq  & \int_{S^n} e_{\ep}(u, D)\big( n^2 f_{\xi}^2 - (n-1)(1 - f_{\xi}^2) - n f_{\xi}^2 \big) d\mu_{g}\\
&-4\int_{S^n} n f_{\xi}^2 \delta_{ij} \big( \ep^{2}F_{e_i, e_k} F_{e_j, e_k} + \langle D_{e_i}u, D_{e_j} u \rangle \big) d\mu_{g}\\
& + 2\int_{S^n} \big( (1 - f_{\xi}^2)\delta_{ij} - \langle X_{\xi}, e_i \rangle\langle X_{\xi}, e_j \rangle - f^{2}_\xi \delta_{ij} \big)\big( \ep^2 F_{e_i, e_k} F_{e_j, e_k} + \langle  D_{e_i}u,  D_{e_j} u \rangle \big) d\mu_g\\
&+4\ep^2\int_{S^n} f_{\xi}^2 \delta_{ij}\delta_{kl}F_{e_i, e_k} F_{e_j, e_l} d\mu_{g}\\
& + 8\int_{S^n} f_{\xi}^2 \delta_{ij}\big( \ep^{2}F_{e_i, e_k} F_{e_j, e_k} + \langle D_{e_i}u, D_{e_j} u \rangle \big)d\mu_{g}.
\end{align*}
Next we substitute the standard basis vectors $\xi_{1}, \cdots, \xi_{n + 1}$ of $\RR^{n + 1}$ into the above inequality and add up the results. By a computation similar to that leading to~\eqref{eq:2nd-innvar-avg}, with additional help from the following identities
\begin{equation}\label{eq:curvature-identities}
\sum_{i, k = 1}^{n}F_{e_i, e_k}^{2} = 2|F|^2 \text{ and }\sum_{i = 1}^{n + 1}|\iota_{\xi_{i}}F|^{2} = 2|F|^2,
\end{equation}
we arrive at
\begin{equation}\label{eq:YMH-sphere-avg}
0 \leq 4(4 - n)\int_{S^{n}}\ep^2|F_D|^{2}d\mu_g + 2(2 - n)\int_{S^n}|Du|^2 d\mu_g.
\end{equation}
Note that the second identity in~\eqref{eq:curvature-identities} holds because the left-hand side is independent of the choice of orthonormal basis $\xi_1, \cdots, \xi_{n + 1}$ of $\RR^{n +1}$. Hence at each $x \in S^{n}$ we need only check the case where $\xi_1, \cdots, \xi_n$ is an orthonormal basis of $T_{x}S^n$ and $\xi_{n + 1} = x$, in which case the identity reduces to the first identity in~\eqref{eq:curvature-identities}.

When $n \geq 5$, inequality~\eqref{eq:YMH-sphere-avg} forces $F_{D}$ and $Du$ to both be identically zero. In other words, the connection $D$ is flat, and the section $u$ is covariantly constant. When $n = 4$,~\eqref{eq:YMH-sphere-avg} implies that $Du = 0$, but then from~\eqref{eq:YMH-eq} we have
\[
\ep^2 d^{\ast}F_{D} = \re\langle \sqrt{-1}u, Du \rangle = 0.
\]
Since $dF_{D} = 0$ as well, we conclude that $F_{D}$ is a harmonic $2$-form on $S^{4}$, and hence must vanish. Therefore we get that $D$ is flat and $u$ is covariantly constant when $n = 4$ also.

Since $S^{n}$ is simply-connected, the presence of a flat $U(1)$-connection on $L$ forces it to be isomorphic to $S^{n} \times \CC$ with the standard Hermitian metric. Note that we can still draw this conclusion even though $D$ may not be smooth globally. Indeed, since $D$ is of class $W^{1, 2}$, by definition there exists a real $1$-form $a$ in $W^{1, 2}$ and a smooth background connection $\widehat{D}$ on $L$ such that $D = \widehat{D} - \sqrt{-1}a$. Then $F_D = F_{\widehat{D}} + da$, which implies that $F_{\widehat{D}} = -da$, because $F_{D} = 0$. From this we claim that $F_{\widehat{D}} = d\alpha$ for a \textit{smooth} $1$-form $\alpha$. Indeed, consider the Hodge decomposition of the \textit{smooth} $2$-form $F_{\widehat{D}}$:
\[
F_{\widehat{D}} = d\alpha + d^{\ast}\beta,
\]
where $\alpha$ and $\beta$ are both smooth. (No harmonic part for $2$-forms on $S^n$ with $n \geq 3$.) Then
\[
\int_{S^n}|d^{\ast}\beta|^2 d\mu_g = \int_{S^n}\langle F_{\widehat{D}}, d^{\ast}\beta \rangle d\mu_g = -\int_{S^n}\langle da, d^{\ast}\beta \rangle d\mu_g = 0,
\]
where we integrated by parts to get the last equality, which is justified because $\beta$ is smooth. Hence $d^{\ast}\beta = 0$, and $F_{\widehat{D}} = d\alpha$ with $\alpha$ smooth, as claimed, but then $\widehat{D} + \sqrt{-1}\alpha$ defines a \textit{smooth} flat connection on $L$, and we can conclude that $L$ is trivial. 

We may then identify $u$ with a function $S^{n} \to\CC$, and write $D = d - \sqrt{-1}A$ for some real $1$-form $A$ on $S^n$. But since $dA = F_{D} = 0$, we must have 
\[A = -d\theta  \text{ for some } \theta: S^n \to \RR,
\]
in which case $Du = 0$ translates into $d(e^{\sqrt{-1}\theta}u) = 0$. It follows that $(u, D)$ is gauge equivalent to $(z_0, d)$ for some constant $z_{0} \in \CC$. 

The first equation in~\eqref{eq:YMH-eq} now implies that either $|z_0| = 1$ or $z_0 = 0$. To rule out the latter case we observe that for all smooth $v:S^{n} \to \CC$, 
\[
\delta^{2}F_{\ep}(0, d)((v, 0)) = \int_{S^{n}} 2|dv|^{2} - \frac{1}{\ep^{2}}|v|^{2} d\mu_{g}.
\]
In particular, taking $v = 1$ shows that $(0, d)$ is unstable. Thus we must have $|z_{0}| = 1$. From this we conclude that $(u, D)$ is in fact gauge equivalent to $(1, d)$.


\end{proof}
\bibliographystyle{amsalpha}
\bibliography{main_final}

\providecommand{\bysame}{\leavevmode\hbox to3em{\hrulefill}\thinspace}
\providecommand{\MR}{\relax\ifhmode\unskip\space\fi MR }
\providecommand{\MRhref}[2]{%
  \href{http://www.ams.org/mathscinet-getitem?mr=#1}{#2}
}
\providecommand{\href}[2]{#2}
\begin{thebibliography}{BGM71}

\bibitem[BBO01]{BBO}
F.~B\'ethuel, H.~Br\'ezis, and G.~Orlandi, \emph{Asymptotics for the
  {G}inzburg-{L}andau equation in arbitrary dimensions}, J. Funct. Anal.
  \textbf{186} (2001), no.~2, 432--520.

\bibitem[BGM71]{BGM}
Marcel Berger, Paul Gauduchon, and Edmond Mazet, \emph{Le spectre d'une
  vari\'{e}t\'{e} riemannienne}, Lecture Notes in Mathematics, Vol. 194,
  Springer-Verlag, Berlin-New York, 1971.

\bibitem[BL81]{BL}
Jean-Pierre Bourguignon and H.~Blaine Lawson, Jr., \emph{Stability and
  isolation phenomena for {Y}ang-{M}ills fields}, Comm. Math. Phys. \textbf{79}
  (1981), no.~2, 189--230.

\bibitem[Bra90]{Brad}
Steven~B. Bradlow, \emph{Vortices in holomorphic line bundles over closed
  {K}\"{a}hler manifolds}, Comm. Math. Phys. \textbf{135} (1990), no.~1, 1--17.

\bibitem[CH78]{CaHo}
Richard~G. Casten and Charles~J. Holland, \emph{Instability results for
  reaction diffusion equations with {N}eumann boundary conditions}, J.
  Differential Equations \textbf{27} (1978), no.~2, 266--273.

\bibitem[Che13]{ChK}
Ko-Shin Chen, \emph{Instability of {G}inzburg-{L}andau vortices on manifolds},
  Proc. Roy. Soc. Edinburgh Sect. A \textbf{143} (2013), no.~2, 337--350.

\bibitem[Che17]{thesis}
D.~R. Cheng, \emph{Geometric {V}ariational {P}roblems: {R}egular and {S}ingular
  {B}ehaviour}, Ph.D. thesis, Stanford {U}niversity, 2017.

\bibitem[Che20]{Che}
\bysame, \emph{Asymptotics for the {G}inzburg-{L}andau equation on manifolds
  with boundary under homogeneous {N}eumann condition}, J. Funct. Anal.
  \textbf{278} (2020), no.~4, 108364, 93pp.

\bibitem[GP94]{Gar}
Oscar Garc\'{\i}a-Prada, \emph{A direct existence proof for the vortex
  equations over a compact {R}iemann surface}, Bull. London Math. Soc.
  \textbf{26} (1994), no.~1, 88--96.

\bibitem[JM94]{JiMo}
Shuichi Jimbo and Yoshihisa Morita, \emph{Stability of nonconstant steady-state
  solutions to a {G}inzburg-{L}andau equation in higher space dimensions},
  Nonlinear Anal. \textbf{22} (1994), no.~6, 753--770.

\bibitem[JS02a]{JS}
Robert~L. Jerrard and Halil~Mete Soner, \emph{The {J}acobian and the
  {G}inzburg-{L}andau energy}, Calc. Var. Partial Differential Equations
  \textbf{14} (2002), no.~2, 151--191.

\bibitem[JS02b]{JiSt}
Shuichi Jimbo and Peter Sternberg, \emph{Nonexistence of permanent currents in
  convex planar samples}, SIAM J. Math. Anal. \textbf{33} (2002), no.~6,
  1379--1392.

\bibitem[KN96]{KN}
Shoshichi Kobayashi and Katsumi Nomizu, \emph{Foundations of differential
  geometry. {V}ol. {II}}, Wiley Classics Library, John Wiley \& Sons, Inc., New
  York, 1996, Reprint of the 1969 original, A Wiley-Interscience Publication.

\bibitem[Le15]{Le}
Nam~Q. Le, \emph{On the second inner variations of {A}llen-{C}ahn type energies
  and applications to local minimizers}, J. Math. Pures Appl. (9) \textbf{103}
  (2015), no.~6, 1317--1345.

\bibitem[Lee03]{Lee}
John~M. Lee, \emph{Introduction to smooth manifolds}, Graduate Texts in
  Mathematics, vol. 218, Springer-Verlag, New York, 2003.

\bibitem[LR99]{LR}
F.~H. Lin and T.~Rivi\`ere, \emph{Complex {G}inzburg-{L}andau equations in high
  dimensions and codimension-two area-minimizing currents}, J. Eur. Math. Soc.
  (JEMS) \textbf{1} (1999), no.~3, 237--311.

\bibitem[LS73]{LS}
H.~Blaine Lawson, Jr. and James Simons, \emph{On stable currents and their
  application to global problems in real and complex geometry}, Ann. of Math.
  (2) \textbf{98} (1973), 427--450.

\bibitem[Mat79]{Mat}
Hiroshi Matano, \emph{Asymptotic behavior and stability of solutions of
  semilinear diffusion equations}, Publ. Res. Inst. Math. Sci. \textbf{15}
  (1979), no.~2, 401--454.

\bibitem[Mor07]{Moro}
Andrei Moroianu, \emph{Lectures on {K}\"{a}hler geometry}, London Mathematical
  Society Student Texts, vol.~69, Cambridge University Press, Cambridge, 2007.

\bibitem[PS19]{PiSt}
A.~Pigati and D.~Stern, \emph{Minimal submanifolds from the abelian {H}iggs
  model}, arXiv:1905.13726 [math.DG] (2019).

\bibitem[RS75]{ReSi}
Michael Reed and Barry Simon, \emph{Methods of modern mathematical physics.
  {II}. {F}ourier analysis, self-adjointness}, Academic Press [Harcourt Brace
  Jovanovich, Publishers], New York-London, 1975.

\bibitem[Ser05]{Ser}
Sylvia Serfaty, \emph{Stability in 2{D} {G}inzburg-{L}andau passes to the
  limit}, Indiana Univ. Math. J. \textbf{54} (2005), no.~1, 199--221.

\bibitem[Sim68]{Sim}
James Simons, \emph{Minimal varieties in riemannian manifolds}, Ann. of Math.
  (2) \textbf{88} (1968), 62--105.

\bibitem[Ste]{Ste}
D.~Stern, \emph{Existence and limiting behavior of min-max solutions of the
  {G}inzburg-{L}andau equations on compact manifolds}, J. Differential Geom.,
  in press.

\bibitem[Xin80]{Xin}
Y.~L. Xin, \emph{Some results on stable harmonic maps}, Duke Math. J.
  \textbf{47} (1980), no.~3, 609--613.

\end{thebibliography}
\end{document}